\documentclass[11pt]{amsart}

\usepackage[T1]{fontenc}
\usepackage{tgschola}

\usepackage{amssymb, amscd, amsmath, amsthm, epsf, epsfig, latexsym,color} 
\usepackage{hyperref}

\usepackage{comment}

\usepackage{pinlabel}

 \usepackage{tikz-cd}

 \newcommand{\Hom}{\operatorname{Hom}}

\newcommand{\TryPackage}[3]{\IfFileExists{#1.sty}{\usepackage{#1}#2}{#3}
}
\TryPackage{mathrsfs}{\renewcommand{\mathcal}{\mathscr}}{%
            \TryPackage{eucal}{}{}}

\newcommand{\al}{\alpha}
\newcommand{\be}{\beta}

\newcommand{\ep}{\epsilon}

\newcommand{\bbi}{{{\bf i}}}
\newcommand{\bbj}{{{\bf j}}}
\newcommand{\bbk}{{{\bf k}}}

\newcommand{\ZZ}{{\mathbb Z}}
\newcommand{\RR}{{\mathbb R}}
\newcommand{\CC}{{\mathbb C}}
\newcommand{\HH}{{\mathbb H}}

\newcommand{\TT}{{\mathbb T}}
\newcommand{\BB}{{\mathbb B}}
\newcommand{\SSS}{{\mathbb S}}
\newcommand{\ab}{{\hbox{\scriptsize \sl ab}}}
\newcommand{\conju}{{\hbox{\scriptsize \sl conj}}}
\newcommand{\cen}{{\hbox{\scriptsize \sl cen}}}
\newcommand{\anc}{{\hbox{\scriptsize \sl anc}}}
\newcommand{\Ima}{\operatorname{Im}}

\newcommand{\Stab}{\operatorname{Stab}}
\newcommand{\Real}{\operatorname{Re}}

\def\del{\partial}

\def\del{\partial}

\theoremstyle{definition}
\newtheorem{df}{Definition}[section]
\theoremstyle{plain}
\newtheorem{theorem}[df]{Theorem}

\newtheorem{corollary}[df]{Corollary}
\newtheorem{lemma}[df]{Lemma}
\newtheorem{proposition}[df]{Proposition}

 \newtheorem{theoremABC}{Theorem}

\author{Christopher M. Herald}
\address{Department of Mathematics and Statistics, University of Nevada,  Reno, NV 89557} 
\email{herald@unr.edu}
 \author{Paul Kirk}
\address{Department of Mathematics, Indiana University,
 Bloomington, IN 47405} 
\email{pkirk@iu.edu}
 \thanks{CH was supported by  a Simons Collaboration Grant for Mathematicians.}

\subjclass[2010]{Primary 57K20 57K18, 57K31, 57R17, 53D30; Secondary 14D21} 
\keywords{genus 2 surface, CP3, character variety, Narasimhan-Ramanan theorem, Floer homology, SU(2) flat moduli space, traceless character variety,  Atiyah-Bott-Goldman symplectic form, Lagrangian immersion}

\begin{document}

\title[$SU(2)$ character variety of the genus two surface] {A  differential topology proof that the $SU(2)$ character variety of the genus two surface is homeomorphic to $\CC P^3$}

\begin{abstract}   
We provide  a proof that  the $SU(2)$ character variety of a genus  two surface, $\chi(F_2)$,  is  a  closed compact manifold,  and    a  proof
of the Narasimhan-Ramanan theorem that $\chi(F_2)$  is 
 homeomorphic to  $\CC P^3$. This is done entirely in the language of $SU(2)$ representations,  differential topology and elementary algebraic topology.  
It avoids the Narasimhan-Seshadri correspondence,   clarifying the nature of Lagrangian immersions into $\chi(F_2)$ induced by 3-manifolds with genus  two boundary. We  give examples   of such Lagrangian immersions and describe a   correspondence from multicurves in the pillowcase to Lagrangian immersions in $\chi(F_2)$, induced by a 2-stranded tangle in a  punctured genus 2 handlebody.  We give an example of a non-transverse pair  of smooth Lagrangians in $\chi(F_2)$ induced by a genus 2 Heegaard splitting of $(S^3,W)$ for the ``linked eyeglasses" web $W$, which are made transverse, and hence the corresponding Chern-Simons function Morse,  using  Goldman flows/holonomy perturbations along embedded curves in the Heegaard surface.
\end{abstract}

 \maketitle
%%%%%%%%%%%%%%%%%%% 

 \section{Introduction}
 
The {\em SU(2) character variety} of the closed oriented genus  two surface $F_2$ is the  space (a compact semi-algebraic set over $\RR$)
$$\chi(F_2)=\Hom(\pi_1(F_2),SU(2))/_{\rm conjugation}.$$
 It is partitioned as
 $$\chi(F_2)=\chi(F_2)^*\sqcup \chi^\ab(F_2)$$
where $\chi^*(F_2)\subset \chi(F_2)$ denotes the subset of conjugacy classes of homomorphisms with non-abelian image,   and $\chi(F_2)^\ab\subset \chi(F_2)$ those with abelian image.
  It is well known  that $\chi^*(F_2)$ is  a smooth manifold of dimension $6$ 
 (see Proposition \ref{dimension}).

\begin{theoremABC}\label{thmA}    Let $C$ be an embedded curve which separates $F_2$ into two punctured tori, and let $\kappa\colon\chi(F_2)\to [-1,1]$ send a conjugacy class $[\rho]$ to $\tfrac1 2 {\rm Trace}(\rho(C)).$
Then the decomposition
$$\chi(F_2)=\{\kappa\leq 0\}\cup_{\{\kappa=0\}}\{\kappa\ge0\}$$   satisfies 
\begin{itemize}
\item $\{\kappa\leq 0\}$  is a closed tubular neighborhood of $\{ \kappa = -1\}
$ in  $\chi^*(F_2)$,  diffeomorphic to $\RR P^3\times D^3$. 
\item $\{\kappa\ge0\}$ is homeomorphic to the total space of a smooth, oriented,  Euler class $(2,2)$  $D^2$ bundle $E\to S^2\times S^2$, by a homeomorphism which is a diffeomorphism  on $\{ 0\leq \kappa\leq 0.5\}\subset \chi^*(F_2)$.  
\end{itemize}
The abelian  locus $\chi^\ab(F_2)$  is  contained in $\{\kappa=1\}$. 
\end{theoremABC}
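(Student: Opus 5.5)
Write $\pi_1(F_2)=\langle a_1,b_1,a_2,b_2\mid [a_1,b_1][a_2,b_2]\rangle$ with $C=[a_1,b_1]$. The plan is to cut along $C$ and study $\chi(F_2)$ as a fibration over $\kappa$, using the punctured-torus character varieties. Put $T=F_2$ cut along $C$ on one side. Its restriction map is $X_i=\{(A_i,B_i)\}\to SU(2)$, $(A,B)\mapsto [A,B]$. A representation of $F_2$ is a pair $(A_1,B_1,A_2,B_2)$ with $[A_1,B_1]=[A_2,B_2]^{-1}=[B_2,A_2]$. For $g$ with $\tfrac12\mathrm{tr}\,g=t$, the fiber $\{(A,B):[A,B]=g\}$ modulo $\Stab(g)$ is the relative character variety of the punctured torus.

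\textbf{Step 1: the relative character variety.} Using Goldman-style trace coordinates $x=\tfrac12\mathrm{tr}A$, $y=\tfrac12\mathrm{tr}B$, $z=\tfrac12\mathrm{tr}AB$, one has the Fricke identity $\tfrac12\mathrm{tr}[A,B]=2(x^2+y^2+z^2)-4xyz-1$. For $t\in(-1,1)$ the level set $\{\tfrac12\mathrm{tr}[A,B]=t\}$ in the cube is a smooth 2-sphere; it degenerates to a point at $t=-1$ (the quaternion representation $A=\mathbf i$, $B=\mathbf j$) and to the Cayley cubic "pillowcase" at $t=1$. The full fiber over $g$, before quotienting by the circle $\Stab(g)\cong U(1)$, is then a circle bundle over this sphere, so it is $S^3$ for generic $t$ (this is the Hopf-type fact I will need to verify).

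\textbf{Step 2: the piece $\{\kappa\le 0\}$.} Over $t\in[-1,0]$, a point of $\chi(F_2)$ is $(\text{point of }X_1\text{ over }g,\ \text{point of }X_2\text{ over }g^{-1})$ modulo $\Stab(g)$, together with the conjugacy class of $g$. Since $t<1$ forces $g\ne\pm1$ and both restrictions to be nonabelian, $\kappa^{-1}([-1,0])\subset\chi^*(F_2)$. I will show that $\kappa$ is a submersion there, by differentiating the trace identity and using that $\kappa^{-1}(-1)$ is the minimum. At $t=-1$ both sides are rigid quaternion representations, and the remaining datum is the gluing element in $SO(3)$ relating the two stabilizers modulo the finite symmetries. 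This identifies $\kappa^{-1}(-1)$ with $SU(2)/\{\pm1\}\cong\RR P^3$, where I still have to check the quotient precisely. A Morse–Bott argument then shows the Hessian is nondegenerate normally with 3 normal directions (dimension $6-3$), so by the gradient flow of $\kappa$, $\{\kappa\le0\}$ is a closed tubular neighborhood. Its normal bundle is trivial because the normal directions are parametrized by left-invariant data, giving $\RR P^3\times D^3$.

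\textbf{Step 3: the piece $\{\kappa\ge0\}$.} Near $\kappa=1$ I instead use the map $[\rho]\mapsto([\rho|_{T_1}],[\rho|_{T_2}])$ to the two pillowcase-like sets. More usefully, I use the map to $S^2\times S^2$ given by the level spheres of Step 1 at $t=0$, pushed along the flow for $t\in[0,1)$. On $\{0\le\kappa<1\}$ the structure is then (interval)$\times$(circle bundle over $S^2\times S^2$), where the circle is the $\Stab(g)$ gluing parameter. The Euler class of this circle bundle is computed from Step 1: each factor's $S^3\to S^2$ Hopf-type bundle contributes $1$, and the relative twist doubles it through the $\pm1$ in $\Stab$. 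I am not sure of the factor, but the target is $(2,2)$. The end $\kappa\to1$ collapses each circle to a point, compactifying to a disk bundle $E$, with the center $\{\kappa=1\}$ equal to the zero section. The abelian locus lies in $\{\kappa=1\}$ because abelian $\rho$ has $\rho(C)=1$. Away from the abelian points this is smooth, but the identification with $E$ is only a homeomorphism there. The main obstacle is making this homeomorphism near $\kappa=1$ rigorous, where $\chi^{\ab}$ and the reducible strata sit. I would first try an explicit conical model: near $\kappa=1$, $\kappa^{-1}(1-\epsilon)$ fibers over $\{\kappa=1\}$ with circle fibers, and the closure is the mapping cylinder. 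The other delicate point is computing the Euler class $(2,2)$ by evaluating on the two sphere factors using explicit $U(1)$-reductions.
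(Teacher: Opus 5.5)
\textbf{The main gap: Step 3 at $\kappa=1$.} Your overall architecture matches the paper's: cut along $C$, use the level spheres of the Goldman pillow, treat the gluing parameter as a circle bundle over $S^2\times S^2$, and realize $\kappa^{-1}(-1)$ as an $\RR P^3$. The problem is Step 3 at $\kappa=1$, where your ``conical model'' is false.

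\begin{itemize}
\item The fiber of the gluing map $\phi\colon \chi(F_2)_1\to \chi(F_-)\times_\kappa\chi(F_+)_1\cong S^2\times S^2$ over a pair of abelian, non-central restrictions is the double coset $U(1)\backslash SU(2)/U(1)\cong[-1,1]$, i.e.\ the angle between the two maximal tori. It is a point only when one of the restrictions is central.
\item So the gluing circles do not collapse to points, and $\{\kappa=1\}$ is not the zero section. It is a $5$-dimensional complex: four abelian parameters plus that angle.
\item By dimension count, $\kappa^{-1}(1-\epsilon)$ cannot fiber over $\{\kappa=1\}$ with circle fibers. Likewise, $\kappa$ is not Morse--Bott at its maximum for any smooth structure, and no homeomorphism $\{\kappa\ge 0\}\cong E$ can carry $\{\kappa=1\}$ onto $S^2\times S^2$.
\end{itemize}

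What your flow/connection construction actually gives is a continuous proper surjection $G$ from $\{\kappa\ge 0\}$ onto the mapping cylinder of $\Lambda_0\colon\kappa^{-1}(0)\to S^2\times S^2$, which is the $D^2$-bundle. This map is a diffeomorphism over $\kappa<1$, and its preimages of points on the zero section are points or arcs. Turning $G$ into a homeomorphism needs two inputs your proposal lacks:
\begin{itemize}
\item $G$ is cell-like, and Siebenmann's CE approximation theorem replaces it by a homeomorphism agreeing with $G$ on $\{0\le\kappa\le 0.5\}$.
\item That theorem requires the domain to be a topological manifold. So you must independently prove that $\chi(F_2)$ is a manifold near $\chi^\ab(F_2)$, including at the sixteen central points. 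The paper does this with the branched double cover $\chi(S^2,6)\to\chi(F_2)$, branched along $\chi^{\rm fix}(S^2,6)\cong\TT^4/\{\pm1\}$, and by identifying the link of a central point with $UTS^3/\tau\cong S^5$.
\end{itemize}

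\textbf{Step 1 and the Euler class.} For $g\neq 1$, the fiber $N_g=\{(A,B)\mid [A,B]=g\}$ is $\RR P^3$, not $S^3$.
\begin{itemize}
\item The commutator map is a proper submersion over $SU(2)\setminus\{1\}$, because non-abelian pairs have finite stabilizer. Also $N_{-1}$ is the space of orthonormal pairs in $S^2$, which is $SO(3)$.
\item Since $-1\in\Stab(g)$ acts trivially, $N_g\to S^2$ is a principal $U(1)/\{\pm1\}$-bundle with $\pi_1=\ZZ/2$. Its Euler number is therefore $\pm 2$.
\item Consequently $(N_g\times N_{g^{-1}})/\Stab(g)\to S^2\times S^2$ has Euler class $(\pm2,\pm2)$. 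This, not a Hopf-bundle heuristic, settles the factor you were unsure of.
\end{itemize}
The paper takes a different route here: the swap symmetry of $F_2$ gives Euler class $(n,n)$, and comparing $\pi_1(\kappa^{-1}(0))=\ZZ/n$ with $\kappa^{-1}(0)=\partial(\RR P^3\times D^3)$ forces $n=2$.

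\textbf{Step 2.} Both the Morse--Bott property and the triviality of the normal bundle follow from writing $\kappa=k\circ W\circ\phi_-$. Here $\phi_-$ is a submersion on $\{\kappa<1\}$, because $H^2(F_2,F_+;su(2)_{{\rm ad}\rho})=0$, and $k$ is Morse at the origin. Then $\{\kappa\le 0\}\to\{k\le 0\}$ is an $\RR P^3$-bundle over a $3$-ball, hence trivial. Appealing to ``left-invariant data'' is not an argument, since $\RR P^3$ does carry a nontrivial oriented rank-$3$ bundle.
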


\begin{theoremABC}\label{thmB}
The $SU(2)$ character variety  $\chi(F_2)$ of a genus  two surface is  homeomorphic to $\CC P^3$.
\end{theoremABC}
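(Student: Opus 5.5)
We deduce Theorem \ref{thmB} from Theorem \ref{thmA} by exhibiting the same decomposition on $\CC P^3$ and then gluing. Write $X=\chi(F_2)=A\cup_M B$ with $A=\{\kappa\le 0\}\cong \RR P^3\times D^3$, $B=\{\kappa\ge 0\}\cong E$, and $M=\{\kappa=0\}\cong \RR P^3\times S^2$. On the other side, $\CC P^3$ has a standard decomposition along the quadric $Q\cong S^2\times S^2$. The complement of a tubular neighborhood of $Q$ deformation retracts onto the totally real $\RR P^3$, and that neighborhood is the normal disk bundle of $Q$, with Euler class $(2,2)$, since $[Q]$ is twice the hyperplane class. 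Concretely, take $f=|\sum z_i^2|^2/|z|^4\colon \CC P^3\to[0,1]$. Then $\{f=0\}=Q$, $\{f=1\}=\RR P^3$, and both $\{f\le 1/2\}$ and $\{f\ge 1/2\}$ are tubular neighborhoods; the latter is $\RR P^3\times D^3$ because $T\RR P^3$ is trivial and the normal bundle is $J$ times the tangent bundle. So $\CC P^3=A'\cup_{M'}B'$ with $A'\cong A$ and $B'\cong B$ abstractly.

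It therefore remains to show that the gluing maps agree up to isotopy and extension. Fix a homeomorphism $\Phi_B\colon B\to B'$, which is a diffeomorphism near $M$ by Theorem \ref{thmA}. It restricts to a diffeomorphism $\phi\colon M\to M'$ of boundaries, both identified with the sphere bundle $\del E\cong \RR P^3\times S^2$. The task is to extend $\phi$ over $A=\RR P^3\times D^3$ to a diffeomorphism onto $A'$; Alexander-trick type extensions on the $D^3$ factor would also be acceptable, since only a homeomorphism is required. To do this we compare the two product structures on $\del E$. The key point is that $\phi$ must send the $S^2$-fibers of $A$ (the boundaries of the normal disks of $\RR P^3$) to fibers of $A'$, at least up to isotopy. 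Equivalently, the maps $\phi_*$ on $\pi_1=\ZZ/2$ and on $H_2(\RR P^3\times S^2)=\ZZ$ must be compatible with the projections to $\RR P^3$.

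I would handle this step by computing the mapping class of $\phi$. Diffeomorphisms of $\RR P^3\times S^2$ that preserve the fiber class up to sign are, up to isotopy, of the form $(x,v)\mapsto (g(x),\, h(x)v)$ with $h\colon \RR P^3\to O(3)$. Any such map extends over $\RR P^3\times D^3$ radially, and radial extension gives the needed diffeomorphism.

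The main obstacle is therefore to show that $\phi$ preserves the $S^2$-fiber class of the $A$-side. Here I would use the explicit description from the proof of Theorem \ref{thmA}. The level set $\kappa=0$ is the unit normal sphere bundle of $\{\kappa=-1\}$, and it fibers over the zero section $S^2\times S^2$ of $E$ with circle fibers. A generator of $H_2(\del E)$ coming from a fiber of the $A$-side should be identified with a specific class, which we compute via the Gysin sequence of the Euler class $(2,2)$ circle bundle; its $H_2$ is $\ZZ$, generated by the image of $(1,-1)$. Because $H_2(M)=\ZZ$ has only the automorphisms $\pm1$, the class is automatically preserved up to sign. What remains is to upgrade this homological statement to an isotopy, which I would obtain from the fact that $\pi_2(\RR P^3\times S^2)=\ZZ$ determines embedded spheres in this setting up to the needed homotopy. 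Gluing $\Phi_B$ with the extension then gives the homeomorphism $X\cong\CC P^3$. As a sanity check, the Mayer–Vietoris computation gives the cohomology ring $\ZZ[h]/h^4$.
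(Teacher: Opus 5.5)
Your proposal has a genuine gap, and it sits exactly at the step you call the main obstacle. Since $H_2(\RR P^3\times S^2)\cong\ZZ$, every self-diffeomorphism of $\RR P^3\times S^2$ preserves the fiber class up to sign, so your hypothesis is vacuous. Your claim that such maps are isotopic to bundle maps $(x,v)\mapsto(g(x),h(x)v)$ therefore says that every element of $\pi_0\,\mathrm{Diff}(\RR P^3\times S^2)$ is a bundle automorphism, and in particular extends over $\RR P^3\times D^3$. You give no argument for this. It is precisely what the authors say, in the remark after Proposition \ref{P=NPforreal}, that they do not know.

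The proposed upgrade via $\pi_2(\RR P^3\times S^2)=\ZZ$ does not close the gap, for two reasons. First, homotopy does not automatically give isotopy for $2$-spheres in a $5$-manifold; this is outside the range $m\geq 2n+2$ of Whitney's theorem. Second, even if $\phi$ carried one fiber sphere to a fiber sphere up to isotopy, that says nothing about $\phi$ being isotopic to a map respecting the whole $S^2$-fibration. Nor can you compute $\phi$ directly. The homeomorphism $\{\kappa\geq 0\}\to E$ of Theorem \ref{thmA} comes from Siebenmann's approximation theorem plus an abstract isomorphism of circle bundles with equal Euler class. The gluing map is therefore not explicit, and your proposal determines it only through its effect on $H_2$.

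The paper avoids the gluing map altogether. For an \emph{arbitrary} diffeomorphism $d\colon\RR P^3\times S^2\to\partial E$ it shows that $X(d)$ is a simply connected closed smooth $6$-manifold with the following invariants:
\begin{itemize}
\item $H^i(X(d))\cong\ZZ$ for $i$ even and $0$ otherwise;
\item $w_2=0$, because $H^2(X(d);\ZZ/2)\to H^2(\RR P^3\times D^3;\ZZ/2)$ is an isomorphism and that piece is parallelizable;
\item $p_1=4k_d$, because $H^4(X(d))\to H^4(E)$ is injective and $p_1(E)=8s_1s_2$;
\item $h_d^3=1$, by Lemma \ref{abcconjecture}: the image of $H^2(X(d))\to H^2(E)$ is $r^{-1}(\mathrm{Torsion}\,H^2(\partial E))$, hence independent of $d$.
\end{itemize}
Wall's classification of simply connected spin $6$-manifolds then gives $X(d)\cong\CC P^3$.

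Your closing ``sanity check'' of the cohomology ring is essentially the cubic-form part of this argument. On its own it cannot finish the proof. There are infinitely many smooth manifolds homotopy equivalent to $\CC P^3$, distinguished by $p_1=(4+24j)h^2$, and by Novikov's theorem they are pairwise non-homeomorphic. So you also need $p_1$ and $w_2$ together with Wall's theorem to conclude.
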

 
Our proof of Theorem \ref{thmB},  a well-known 1969  theorem of Narasimhan and Ramanan \cite[Theorem 2]{NR}, combines Theorem \ref{thmA} with a simple application  of  Wall's  classification of  smooth simply connected  spin
6-manifolds \cite{Wall-classificationV}.
We show in Section \ref{POT8}  that   $\{\kappa=-1\}$ is diffeomorphic to $\RR P^3$; our homeomorphism with $\CC P^3$ sends $\{\kappa=-1\}$ to the real points  $\RR P^3$.

\medskip 

The proof of Theorem \ref{thmA} is carried out in two steps. First, we give an equivariant differential topology proof that $\chi(F_2)$ is a topological manifold by showing that points of each orbit type have neighborhoods homeomorphic to $\RR^6$ (Theorems \ref{th4.1K6} and \ref{thm2.2}).    Second, we show $\kappa$ has only two critical values and  identify the level sets and the sup- and sub-level sets of the character $\kappa$.
Our main technical challenge in the second step is the proof of Theorem \ref{thm8}, which establishes the existence of a homeomorphism of the sup-level set $\{\kappa\ge 0\}$  with  a neighborhood of the smooth quadratic complex surface $Q$ in $\CC P^3$.

\subsection{Discussion} Narasimhan and Ramanan's   proof   of Theorem \ref{thmB} passes through the Narasimhan-Seshadri correspondence  \cite{NS},
which asserts that the $SU(2)$ character variety $\chi(F)$ of a Riemann surface $F$ is homeomorphic to the moduli space of trivial determinant, semi-stable, rank two holomorphic bundles over $F$.   This latter moduli space is identified, in the case of any genus two Riemann surface,  with  the smooth  complex manifold $\CC P^3$  in \cite{NR}.   This description of $\chi(F_2)$ is incomplete from the perspective of Floer theory and low-dimensional topology, which aims to  extract information  from  
  Lagrangian immersions  into $\chi^*(F_2)$ induced by  compact 3-manifolds with boundary $F_2$.
Understanding what these Lagrangian immersions look like ``in $\CC P^3$'' is tricky since the Narasimhan-Seshadri correspondence  has  no (direct)  3-dimensional counterpart.

 \medskip

 Our arguments rely instead on  an analysis of $SU(2)$ representations, avoiding  the Narasimhan-Seshadri correspondence and the Narasimhan-Ramanan theorem.   This approach facilitates  identifying Lagrangian immersions $\chi(Y)\to \chi(F_2)$  induced by 3-manifolds $Y$ with genus two boundary.  
 
  We identify such immersions for various choices of 3-manifolds in the last third of this article (Section \ref{lagrange}),  taking an approach consistent with the Atiyah-Floer conjecture \cite{Atiyah1} and holonomy perturbations of Chern-Simons functions.   
 In Corollary \ref{correspond} we describe the Lagrangian correspondence
 from multicurves in the 4-punctured 2-sphere   to immersed Lagrangian 3-manifolds in $\chi(F_2)$ 
   induced by the $SU(2)$ character variety of the tangle in a punctured genus two handlebody illustrated in Figure \ref{fig00fig}.  
In Section \ref{HOE} we illustrate  the explicit relationship between holonomy perturbations and Goldman flows (established in \cite{HK2}) 
 to    make a    pair of Lagrangians in $\chi(F_2)$ associated to a Heeegard splitting transverse.   
  A good introduction to the mathematical ideas which motivate the construction of these examples can be found in Atiyah's    article {\em New Invariants of 3- and 4-Dimensional Manifolds} \cite{Atiyah1}.

   The homeomorphism   of $\chi(F_2)$ with $ \CC P^3$ we provide is explicit enough to  
parameterize  embedded submanifolds of $\chi(F_2)$ representing classes in $H_*(\chi(F_2))\cong H_*(\CC P^3)$. For example, Let $Q'\subset \chi(F_2)$ denote 
the  conjugacy classes  $\rho\colon \pi_1(F_2)\to SU(2)$     taking  the first two standard surface generators into the maximal torus $\{e^{\theta\bbi}\}$ and the third and fourth into $\{ e^{\theta\bbj}\}$. Then $Q'$ is homeomorphic to a product of two 2-spheres.  We show $[Q']=2y\in H_4(\chi(F_2))=\ZZ\langle y\rangle$.
We   also construct an explicit embedded 2-sphere  $L'\subset \chi(F_2)$  representing the generator  of $H_2(\chi(F_2))$, and  show that the 4-dimensional abelian locus  represents  $4y\in  H_4(\chi(F_2))$ (this last fact also follows via the Narasimhan-Seshadri correspondence). The submanifolds $Q', L', \chi^\ab(F_2)$ intersect geometrically in $\chi(F_2)$  identically to how their homeomorphic and holomorphic counterparts,  
the smooth quadratic surface $Q$, a complex line $L$, and  the singular Kummer quartic surface $K\cong\TT^4/\{\pm 1\}$,  intersect in $\CC P^3$.

\subsection{Relationship to previous work} 
A proof that $\chi(F_2)$ is   a closed topological manifold, which avoids the Narasimhan-Seshadri correspondence and instead uses invariant theory in a symplectic setting,  was given by Huebschmann on pages 215-220 of
\cite{Hueb2}. His proof relies on parts of his earlier work  \cite{Hueb1} analyzing the local Poisson structure of $SU(2)$ character varieties of   compact oriented  surfaces.    
We provide a direct topological proof that $\chi(F_2)$ is a manifold, based on the result that $\chi(F_2)$ is the orbit space of an involution on the {\em traceless character variety} $\chi(S^2,6)$ of the 2-sphere with 6 marked points.

A proof that $\chi(F_2)$ is homeomorphic to $\CC P^3$ which avoids  the Narasimhan-Seshadri correspondence, but relies on the fact that $\chi(F_2)$ is a manifold,   was
given by Choi \cite{choi}.  
Choi's approach, and also the approach taken in   \cite{Hotal} which probes $\chi(F_2)$ using symplectic methods,  starts with a decomposition of a genus  two surface into two pairs of pants along three circles.  Modified Goldman flows (see \cite{goldman-invariant, jeffrey-weitsman,JW2} and Section \ref{sectiondiagram} below)  along these circles define a Hamiltonian $\TT^3$ action on a dense open subset of the smooth locus of $\chi^*(F_2)$,  which is then compared to the standard toric structure  $\CC P^3\to \Delta^3$.     By contrast, the proof   that $\chi(F_2)$ is homeomorphic to $\CC P^3$  presented here  starts from a decomposition of the genus two surface into two  punctured tori, and compares the character $\kappa\colon \chi(F_2)\to \RR$ of the separating curve  to a real-analytic map $\kappa_{\CC P^3}\colon \CC P^3\to \RR$  (\ref{Coltrane}). The map   $\kappa_{\CC P^3}$ is Morse-Bott with only two critical values, and decomposes $\CC P^3$ into the union of a tubular neighborhood of $\RR P^3$  and a tubular neighborhood of a smooth quadratic surface $Q$. A basic result of Wall \cite{Wall-classificationV} is used to show that any
smooth six manifold built out of two such tubular neighborhoods is diffeomorphic to $\CC P^3$.    Theorem \ref{thmA} thereby implies Theorem \ref{thmB}. 

\medskip

The results in this article are inspired by Boozer's article \cite{boozer}, in which he studies the traceless character variety $\chi(T^2,2)$ and its Lagrangian submanifolds by means of the character $\kappa'\colon \chi(T^2,2)\to \RR$ of a curve which separates $(T^2,2)$ into a pair of pants and a once-punctured torus.  Boozer proves $\chi(T^2,2)$ is homeomorphic to $S^2\times S^2$. One can prove, in a manner similar to   Theorem \ref{th4.1K6}, that $\chi(T^2,2)$ is doubly branched covered by the smooth manifold $\chi(S^2,5)$, known   to be  is diffeomorphic to $\CC P^2\# 5 \overline{\CC P}^2$ (\cite{KK,seidel}). 

The investigations of the present article, as well as Boozer's and  other articles including \cite{KaiS, Zuyi, CHKK,CHK,HK3}, aim to understand $SU(2)$ 
instanton-Floer theory from the ``bordered Floer theory" perspective \cite{LOT, BN,HRW, KWZ} with a focus on describing the Lagrangian correspondence
associated to    3-dimensional bordisms whose   character varieties have formal dimension greater than one. 

\subsection{Contents}   Section \ref{prelim}  reviews  the basics of character varieties, their smooth points,   and their tangent spaces.  The remainder of the article consists of three parts.   The first part, Sections
 \ref{sum6}, \ref{branch},  and  \ref{singpt},  contain  the  proof that $\chi(F_2)$ is a topological manifold.
The second part, Sections \ref{basics2}, \ref{princ}, and  \ref{pfthm1}   contain the proof that $\chi(F_2)$ is homeomorphic to $\CC P^3$. The last part, Section  \ref{lagrange}, contains examples  
 of Lagrangian immersions $\chi(Y)\to\chi(F_2)$, induced by   3-manifolds $Y$ with genus  two boundary, as well as constructions of even dimensional embedded cycles
representing   homology classes in $H_*(\chi(F_2))$.

 Apart from Section \ref{lagrange}, we have kept the required background to a minimum.  The proof that $\chi(F_2)$ is a manifold uses Morse theory and the equivariant tubular neighborhood theorem.   The basics of cohomology and characteristic classes are used in the proof that $\chi(F_2)$ is homeomorphic to $\CC P^3$.

 The examples in Section \ref{lagrange} rely  on the principle \cite{Atiyah1} that a compact 3-manifold with genus  two boundary (generically) determines a (stratified) Lagrangian immersion of its character variety into $\chi(F_2)$ \cite{herald1}.  In that section we   freely use the language of   symplectic geometry.

 Readers familiar with character varieties of surfaces and the Atiyah-Bott-Goldman symplectic form
   can quickly gain an overview of this article by reading, in succession, the statements of  Proposition \ref{dimension}, Theorem \ref{th4.1K6}, Theorem \ref{thm2.2}, Equations (\ref{kappa1}) and  (\ref{Coltrane}), Proposition \ref{P=NPforreal}, Theorems \ref{thm7} and \ref{thm8}, Corollary \ref{correspond}, and  the examples in Section \ref{ccm}.
%%%%%%%%%%
 \subsection{Acknowledgments.} Very special thanks to Dave Boozer, who provided us with the proof  of Theorem \ref{essfive}.
  The second author is indebted to Tom Mrowka for a discussion from which several of the ideas in this article originated, including  the example in Section \ref{w2}.   
 Thanks also to  Diarmuid Crowley, Jim Davis,  Jason DeVito, and Dan Ramras for helpful  discussions.
\section{Preliminaries}\label{prelim} 
This article focuses on   representations $\rho\colon \pi\to SU(2)$, which can have  one of only three orbit types. Proposition \ref{dimension} records various well-known properties of $SU(2)$ character varieties of {\em compact surfaces}. 
\subsection{Notation}

Denote by $\HH$ the quaternions
$A=a+b\bbi+c\bbj+d\bbk, ~a,b,c,d\in \RR$. Define the $\RR$-linear projection  $\Real\colon \HH\to \RR$ by $\Real(a+b\bbi+c\bbj+d\bbk)=a$, and set $su(2)=\ker \Real\colon \HH\to \RR$.  Denote by $\Ima={\rm Id}-\Real\colon \HH\to su(2)$.  {\em Quaternion conjugation} means the $\RR$-linear
map $$^-\colon\HH\to \HH,~\overline{a+b\bbi+c\bbj+d\bbk}=
a-b\bbi-c\bbj-d\bbk.
$$

We consider $\CC=\{a+b\bbi\}\subset \HH$ and note that 
 quaternion conjugation restricts to complex conjugation on $\CC$.
The quadratic form $\langle A,B\rangle=\Real\left(\overline{A}B\right)$ is  positive definite   on $\HH$.
\medskip

The notation $SU(2)$ and $S^3$ will be used interchangeably
to denote {\em the unit 3-sphere in $\HH$}, 
$$SU(2)=S^3=\left\{a+b\bbi+c\bbj+d\bbk\mid a^2+b^2+c^2+d^2=1\right\}.$$ 
This 3-sphere inherits a group structure from $\HH$, with $A^{-1}=\overline{A}$.  It contains the circle subgroup $\left\{a+ b\bbi\mid   a^2+b^2=1\right\}=\left\{ e^{\theta\bbi}\right\}= SU(2)\cap \CC=U(1)$.   
The notation $S^2$ will refer to {\em the unit 2-sphere in the Lie algebra $su(2)$}:  
$$S^2=\left\{b\bbi+c\bbj+d\bbk\mid  b^2+c^2+d^2=1\right\}=\{A\in SU(2)\mid  \Real(A)=0\}=SU(2)\cap su(2).$$
Unit quaternions in $S^2$ are called {\em traceless}; this is because   the isomorphism between $SU(2)$ and the unit quaternions satisfies ${\rm Tr}=2\Real$.

 We use frequently  use the  following  simple facts.
 \begin{itemize}
\item $S^2/\{\pm 1\}$ parametrizes the circle subgroups of $SU(2)$ via $\pm P \mapsto 
\left\{e^{\theta P}\mid  \theta\in \RR/2\pi\ZZ\right\}$.
\item $ \RR P^3,~SO(3), ~SU(2)/\{\pm 1\}$, the unit tangent bundle $UTS^2$ of $S^2$, 
 and the fixed point set of complex conjugation on $\CC P^3$ are diffeomorphic 3-manifolds.
\end{itemize}

 \subsection{Double branched covers}\label{branched}
 Given a smooth involution   $\sigma\colon X\to X$ on  a smooth  manifold 
whose fixed point set $X^{\rm fix}$ is a smooth  codimension two submanifold,
 the equivariant tubular neighborhood theorem \cite{bredon} implies that the orbit space 
 $Y:=X/\sigma$ is a topological manifold  
 which admits  at least one   smooth structure so that the orbit map $b\colon X\to Y$  is smooth and satisfies:
  \begin{enumerate}
\item The restriction $b\colon X^{\rm fix}\to b(X^{\rm fix})$ is a diffeomorphism,
\item the restriction $b\colon X\setminus X^{\rm fix}\to Y\setminus b(X^{\rm fix})$ is a smooth 2-fold covering space.
\end{enumerate}
We define a {\em smooth double branched cover  with branch set $X^{\rm fix}$} to be a smooth map $f\colon X\to Y$ of smooth  manifolds   such that the fibers of $f$ are the orbits of a smooth involution on $X$ with smooth codimension  two fixed point set $X^{\rm fix}$.
  \medskip

\subsection{$SU(2)$ character varieties of finitely presented groups} 

The basics concerning $SU(2)$  character varieties are carefully laid out in \cite{Akbulut-McCarthy}.   The literature on the spaces $\chi(F)$ and $\chi(S^n,k)$ is large.  For a few articles that take a similar approach to character varieties as the present article, see 
\cite{Klassen,Lin, Heusener-Kroll, JR, K6, jeffrey-weitsman} and references therein.  

\medskip

A  group presentation  \begin{equation} \label{fpgroup} G=\langle x_i, ~i=1,\dots,g\mid  w_j(x_1,\dots ,x_g), ~j=1,\dots ,r\rangle\end{equation} determines a  conjugation-invariant    map $$R\colon SU(2)^g\to SU(2)^r, ~ R(A_1,\dots, A_g)=\big(w_1(A_1,\dots, A_g),\dots, w_r(A_1,\dots, A_g)\big).$$
There is a natural bijection  $$\Hom(G,SU(2))\cong R^{-1}(1,\dots,1)$$ given by evaluation on the generators $x_i$.   Elements of $\Hom(G,SU(2))$ are called {$SU(2)$ \em representations of $G$}.
Topologize $\Hom(G,SU(2))$  as a subspace of $SU(2)^g\subset \RR^{4g}$.

 The orbit space, $R^{-1}(1) /\conju$ is called {\em the $SU(2)$ character variety of $G$}  and is denoted by $\chi(G)$.  It is straightforward to check that the spaces $\Hom(G,SU(2))$ and  $\chi(G)$ are, up to homeomorphism,
  independent of the choice of the presentation of $G$. We   abuse notation slightly and typically  write $\rho$ for both a representation and its conjugacy class when
it is clear from context which is meant. If we need to distinguish the two, we write $[\rho]$ for conjugacy class.

 The space $\chi(G)$  has a partition by orbit type for the conjugation action
 $$\chi(G)=\chi^*(G)\sqcup \chi^\anc(G)\sqcup \chi^\cen(G)$$
 where $\chi^*(G)$ denotes the conjugacy classes of representations with non-abelian image  (equivalently with stabilizer the center $\{\pm 1\}$), $\chi^\anc(G)$ those with abelian but non-central image (equivalently, with stabilizer  $U(1)$), and $\chi^\cen(G)$ those with central image (and stabilizer $SU(2)$).  
Set $\chi^\ab(G)=\chi^\anc(G)\sqcup\chi^\cen(G)$ to be those conjugacy classes with abelian image.
Then $\chi(G)$ has a filtration
$$\chi^\cen(G)\subset \chi^\ab(G)\subset \chi(G).$$

\medskip

Define {\em traceless} character varieties in the following way.  If $\gamma\in G$, define the {\em character} of $\gamma $ to be the function $\kappa_\gamma\colon \chi(G)\to [-1,1]$ given by $$\kappa_\gamma(\rho)=\Real(\rho(\gamma)).$$ 
Given an ordered list $S=(\gamma_1,\gamma_2,\dots, \gamma_n)\in G^n$, define the  map
$$T_{S}\colon SU(2)^g\to \RR^k \text{ by } T_{S}(\rho)=(\kappa_{\gamma_1}, \dots,\kappa_{\gamma_n})(\rho)=(\Real(\rho(\gamma_1)), \dots, \Real(\rho(\gamma_n))).$$
The map $T_S$ can be combined with $R$ to produce 
$$\widetilde{R}_S:=R\times T_{S}\colon SU(2)^g\to SU(2)^r\times \RR^n.$$

Representations in the subset $\widetilde{R}_S^{-1}(1,0)\subset R^{-1}(1)=\Hom(G,SU(2))$ are called {\em traceless} representations, and  the orbit space, $\widetilde{R}_S^{-1}(1,0) /\conju$ is denoted by $\chi(G,S)$  and called the  {\em traceless character variety} of $(G,S).$ When $S$ is non-empty, 
$\chi^\cen(G,S)$ is empty, and hence $\chi(G,S)=\chi^*(G,S)\sqcup \chi^\anc(G,S)$.

\medskip

 If $(X,Y)$ is a smooth codimension  two pair of compact manifolds, a {\em meridian} of $(X,Y)$ is, by definition,  a based loop in $X\setminus Y$ bounding a 2-disc in $X$ which intersects $Y$  in precisely one point, transversely. Choosing a finite set $S$ of meridians representing all meridian conjugacy classes,  define $$\chi(X,Y):=\chi(\pi_1(X\setminus Y),S).$$   In the special case when $X$ is a connected 2-manifold
 and $Y$ is a finite set of $n$ points, we write $\chi(X,n)$.

\subsection{Smooth  points  and tangent spaces of  $ SU(2)$ character varieties.} \label{Se2.2} 

The following differential-topological definition of smoothness suffices for this article.   A comprehensive discussion of smooth points of algebraic sets can be found in Chapter 2 of  Milnor's book \cite{milnor2}. 

The group $SU(2)\subset \HH=\RR^4$ is defined by the polynomial equation $a^2+b^2+c^2+d^2=1$ and $R\colon SU(2)^g\to SU(2)^r$ extends to a polynomial map   $\RR^{4g}\to \RR^{4r}$. Therefore 
$\Hom(G,SU(2))\subset \RR^{4g}$ is a real affine algebraic set.

 As   orbit spaces of a compact group action on a real algebraic set, $\chi(G)$  and its subset $\chi(G,S)$ are compact   semi-algebraic    sets (defined by polynomial identities and polynomial  {\em inequalities} \cite{BCR}).   The spaces $\Hom(G,SU(2))$ and  $\chi(G)$ are, up to polynomial isomorphism,  independent of the choice of the presentation of $G$ (see e.g. \cite{Culler-Shalen}).
Hence we use the (somewhat imprecise) terminology character {\em varieties}. 

\begin{df}  If $G$ is presented as in (\ref{fpgroup}), 
define $\rho\in \Hom(G,SU(2))\subset \RR^{4g}$ to be a {\em smooth point} if there exists a neighborhood $U\subset \RR^{4g}$ of $\rho$  and a smooth submersion $f\colon U\to \RR^k$
for some $k$ so that $U\cap \Hom(G,SU(2))=f^{-1}(0).$ Hence $\Hom(G,SU(2))$ is a smooth $4g-k$ manifold near a smooth point.

Define $[\rho]\in \chi(G)$ to be a {\em smooth point} provided one of its  representatives $\rho\in \Hom(G,SU(2)))$ 
is a smooth point  and   the neighborhood $U$  can be chosen so that the orbit type is constant on $U$. By the equivariant tubular neighborhood theorem, $\chi(G)$ is a smooth manifold of dimension $4g-k-\dim({\rm Stab}(\rho))$ near a smooth point $\rho$.  We call the subset of smooth points the {\em smooth locus of $\chi(G)$}.
\end{df}

  Weil's theorem \cite{weil} gives a natural identification of  the  tangent space  at a smooth point $\rho$ in $ \chi(G)$ with $H^1(G; su(2)_{{\rm ad} \rho})$,  where $su(2)_{{\rm ad} \rho}$ denotes the   local coefficients given by $ad\circ \rho \colon  G\to SU(2)\to GL(su(2))$ (see e.g. \cite[Chapter 5]{DaK}). 
The differential at $\rho$  of the map $h^*\colon \chi(G_2)\to \chi(G_1)$ induced by a group homomorphism $h\colon G_1\to G_2$, when $\rho$ and $h^*(\rho)$ are smooth points, is canonically identified with the induced map on cohomology 
$H^1(G_2; su(2)_{{\rm ad} \rho})\to H^1(G_1; su(2)_{{\rm ad} h^*(\rho)})$.
At {\em any} $\rho\in\chi(G)$, the group $H^1(G; su(2)_{{\rm ad} \rho})$   is called the {\em Zariski tangent space  of $\chi(G)$ at $\rho$}.     Recall that the first cohomology of a space $X$ and the first cohomology of its fundamental group $G=\pi_1(X)$ are naturally isomorphic.

\subsection{Surfaces and the Atiyah-Bott-Goldman form}

The $SU(2)$ character variety of a compact surface  is simpler than the character variety of an arbitrary  group  because its non-abelian locus is cut out transversely by $R$ or $\widetilde R_S$ for the usual   presentation of the surface group, and, with the few exceptions of surfaces with abelian fundamental groups,  its non-abelian locus coincides with its smooth locus.  The following well-known  proposition can be proven directly  by
calculating $dR$ and $d\widetilde R_S$, or by calculating Zariski tangent spaces, an easy task using the Fox calculus \cite{fox}; see e.g.~ \cite{Akbulut-McCarthy, Heusener-Kroll, JR, K6}.

\begin{proposition}\label{dimension}  Let $F$ be a closed and connected surface of genus $g$ and $S\subset F$ a finite set of $n$ points.   Assume that  $6g+2n> 6$. Then
the subspace of non-abelian conjugacy classes, 
 $\chi^*(F,n)\subset\chi(F,n)$,  coincides with the smooth locus of $\chi(F,n)$, and  
is a non-empty smooth manifold of dimension  $6g+2n-6$.  
\end{proposition}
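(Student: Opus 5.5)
Work with the standard presentation $\pi_1(F\setminus S)=\langle a_1,b_1,\dots,a_g,b_g,c_1,\dots,c_n\mid \prod_i[a_i,b_i]\,c_1\cdots c_n\rangle$, where $S=(c_1,\dots,c_n)$ are the meridians. The traceless representation space is then $\widetilde R_S^{-1}(1,0)$ for the map
$$\widetilde R_S\colon SU(2)^{2g+n}\to SU(2)\times\RR^n .$$
Since $\Real(\rho(c_j))=0$ cuts out $S^2\subset SU(2)$ transversely, it is convenient to restrict first to the smooth manifold $M=SU(2)^{2g}\times (S^2)^n$, which has dimension $6g+2n$. On $M$ we consider the single map
$$\Phi\colon M\to SU(2),\qquad \Phi=\prod_i[A_i,B_i]\,C_1\cdots C_n .$$

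The plan has four steps. The first is to show that $\Phi$ is a submersion at $\rho$ if and only if $\rho$ has non-abelian image. Right-translate $d\Phi$ to $su(2)$. Varying a single $A_i$ or $B_i$ contributes images of the form $(\mathrm{Ad}_X-1)(\xi)$, conjugated by partial products. Varying $C_j$ along $T S^2$ contributes $\mathrm{Ad}_{(\cdot)}$ of the subspace $C_j^\perp$, i.e.\ again $(\mathrm{Ad}_{C_j}-1)(su(2))$ up to conjugation. By the Fox calculus the image of $d\Phi$ is therefore the span of conjugates of $\mathrm{Im}(\mathrm{Ad}_{\rho(x)}-1)$ over the generators $x$. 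Dually, the cokernel is identified with the invariants $H^0(\pi;su(2)_{\mathrm{ad}\rho})$ via Poincaré duality, which gives $H^2$ of the surface relative to the punctures. These invariants vanish exactly when the image of $\rho$ has centralizer $\{\pm1\}$, i.e.\ when $\rho$ is non-abelian.

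The second step uses this: at non-abelian $\rho$, $\Phi^{-1}(1)$ is locally a submanifold of dimension $6g+2n-3$. The stabilizer is $\{\pm1\}$, so $SO(3)$ acts freely near $\rho$, and by the slice theorem the quotient $\chi^*(F,n)$ is a manifold of dimension $6g+2n-6$. It is smooth in the sense of the paper's Definition, with $f=(\Phi,\Real(C_j))$ as the submersion, and orbit type is constant nearby because non-abelian is an open condition.

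The third step is the converse: abelian classes are not smooth points. Here I would compute the Zariski tangent space $H^1(\pi;su(2)_{\mathrm{ad}\rho})$ at an abelian $\rho$ and compare it with the dimension of any possible smooth chart. If $\rho$ were smooth with stabilizer $U(1)$ or $SU(2)$ constant on a neighborhood, the nearby classes would all be abelian. That contradicts density of non-abelian representations near $\rho$, which is available whenever $6g+2n>6$: for $g\ge2$, or for $g=1$ with $n\ge1$, or for $g=0$ with $n\ge4$, one can write down explicit non-abelian deformations, e.g.\ by perturbing one generator off the common circle while solving the relation with the others. I expect this step to be the main obstacle. The cases $n$ odd with $g=0$, and the central $g\ge1$ representations, need care, because an abelian traceless rep with $n$ odd cannot have all $C_j$ in one circle with product $1$ unless the signs work out.

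The fourth step is nonemptiness, which I would settle by explicit construction. For $g\ge 2$, take $A_1,B_1$ anticommuting traceless elements, so that $[A_1,B_1]=-1$, and the same for $A_2,B_2$, with all other generators trivial. For $g=1$ or $g=0$, choose the $C_j$ traceless, pair off conjugate points so that the products cancel, and make them non-commuting; the condition $6g+2n>6$ guarantees there is enough room to do this.
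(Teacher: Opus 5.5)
Your Steps 1 and 2 follow the route the paper indicates. The paper does not prove the proposition; it only says to compute $dR$ and $d\widetilde R_S$ (or the Zariski tangent spaces via Fox calculus) and cites the literature. That core is sound. The orthogonal complement of the span of the translated images $\mathrm{Ad}_{g_k}(1-\mathrm{Ad}_{\rho(x_k)})su(2)$ is the set of $\zeta$ commuting with every $g_k\rho(x_k)g_k^{-1}$. An induction along the relator shows these elements generate $\rho(\pi)$, so the cokernel of $d\Phi$ is $\mathfrak{z}(\rho)$, and the free $SO(3)$ slice argument then gives a manifold of dimension $6g+2n-6$.

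The weak points are Steps 3 and 4, both easily repaired.

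\emph{Step 3.} Your worry is misplaced, and the density argument is left undone. If $\rho$ is abelian with image in $\{e^{\theta P}\}$, then $\rho(c_j)=\pm P$, and the relator forces $\pm P^n=1$, which is impossible for odd $n$. So for odd $n$ there are no abelian classes at all, and central classes occur only when $n=0$. For even $n$, the paper's Definition only requires that non-abelian representations accumulate at $\rho$, since then the orbit type is not locally constant. Two moves suffice:
\begin{enumerate}
\item Conjugate by $e^{tQ}$, with $Q\perp P$, the values on a block of the relator whose $\rho$-image is central: the pair $c_1,c_2$, or a handle $a_l,b_l$.
\item If $\rho(a_l)=\pm1$, replace $\rho(b_l)$ by $\rho(b_l)e^{tQ}$.
\end{enumerate}
Each move preserves the relation and the trace conditions. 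A short case check ($g=0$, $n\ge4$; $g\ge1$, $n\ge2$; $n=0$, $g\ge2$) shows that one or two such moves, on different blocks, give non-abelian representations arbitrarily close to $\rho$. This is where $6g+2n>6$ enters.

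\emph{Step 4.} Your recipes fail in several cases:
\begin{itemize}
\item for odd $n$, where the $c_j$ cannot all be paired off, e.g.\ $(g,n)=(1,1)$ or $(0,5)$;
\item for $g\ge2$ with $n\ge1$, where the $C_j$ cannot be trivial;
\item for $(g,n)=(1,2)$, where a single cancelling pair commutes.
\end{itemize}
Instead, put the remaining $C_j$ in pairs $(\bbi,-\bbi)$, make unused handles trivial, and seed as follows:
\begin{itemize}
\item $g=0$, $n$ odd: $(\bbi,\bbj,\bbk,\bbi,\bbi)$;
\item $g\ge1$, $n$ odd: $A_1=\bbi$, $B_1=\tfrac{1}{\sqrt2}(\bbi+\bbj)$, $C_1=\overline{[A_1,B_1]}=\bbk$;
\item $g=0$, $n$ even: $(\bbi,-\bbi,\bbj,-\bbj)$;
\item $g\ge1$, $n\ge2$ even: $A_1=\bbj$, $B_1=1$.
\end{itemize}
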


\subsubsection{Symplectic structure} In the case of of a closed oriented surface $F$, the cup product 
$$H^1(F; su(2)_{{\rm ad} \rho})\times H^1(F; su(2)_{{\rm ad} \rho})\to H^2(F; \RR)\cong \RR,$$  using the bilinear form $\Real(\bar A B)$ on $su(2)$,   
is skew-symmetric and non-degenerate by Poincar\'e duality.  The theorem of Atiyah-Bott and Goldman
asserts that, using Weil's identification of tangent spaces, this 2-form on $\chi^*(F)$ restricts to a {\em closed, smooth}, and hence symplectic, form on the smooth non-abelian locus $\chi^*(F)$ \cite{AB, Goldman}.
More generally, the Zariski tangent space of a traceless character variety $\chi(F,n)$  has a cohomological definition and     the cup product defines a symplectic form \cite{Jeffreyetal, CHK} on $\chi^*(F,n)$.

\subsection{The link of a point in a character variety}
 Evaluation on the $g$ generators of a finitely presented group $G$ embeds $\Hom(G, SU(2))$ into  $SU(2)^g$, equivariantly  under the conjugation action. 
Fix $\rho\in \Hom(G, SU(2)$ and let
$O_\rho\subset SU(2)^g$ denote the orbit of $\rho$,  a smooth invariant submanifold.

The equivariant tubular neighborhood theorem provides 
 a   normal vector bundle $N \to  O_\rho$, equipped with a metric and  an   action $SU(2)\to {\rm Isom}(N )$, and
 for some $\ep>0$, a smooth equivariant embedding  of  the closed $\ep$ disk bundle $N (\ep)$ of $N $ into $SU(2)^g$,
with image  a closed $SU(2)$ invariant tubular neighborhood $U_\ep$.  The intersection $U_\ep\cap  \Hom(G, SU(2))$ is invariant.
Define  
$${\rm Lk}(\rho,\ep; \chi(G)):= \big(\partial U_\ep\cap \Hom(G, SU(2))\big)/\conju \subset \chi(G)\subset SU(2)^g/\conju.$$ 
If there exists an $\ep_0>0$ so that the homeomorphism type of ${\rm Lk}(\rho,\ep; \chi(G))$ is independent of
$0<\ep\leq \ep_0$, define ${\rm Lk}(\rho; \chi(G))$ to be the homeomorphism type of ${\rm Lk}(\rho,\ep_0; \chi(G)).$    If ${\rm Lk}(\rho; \chi(G))$ is defined and homeomorphic to an $(\ell-1)$-sphere,
then $\chi(G)$ is a topological $\ell$-dimensional manifold near $\rho$.

 \subsection{The genus two surface}\label{g2a}

\begin{figure}[ht!]
\labellist
\pinlabel $r_-$ at 225 485
\pinlabel $s_+$ at 420 500
\pinlabel $s_-$ [bl] at 155 245
\pinlabel $r_+$ [bl] at 450 250
\pinlabel $C$ [bl] at 347 290
\pinlabel $F_-$ [bl] at 150 180
\pinlabel $F_+$ [bl] at 430 180
\endlabellist
\centering
\includegraphics[width=3.3in, height=1.9in]{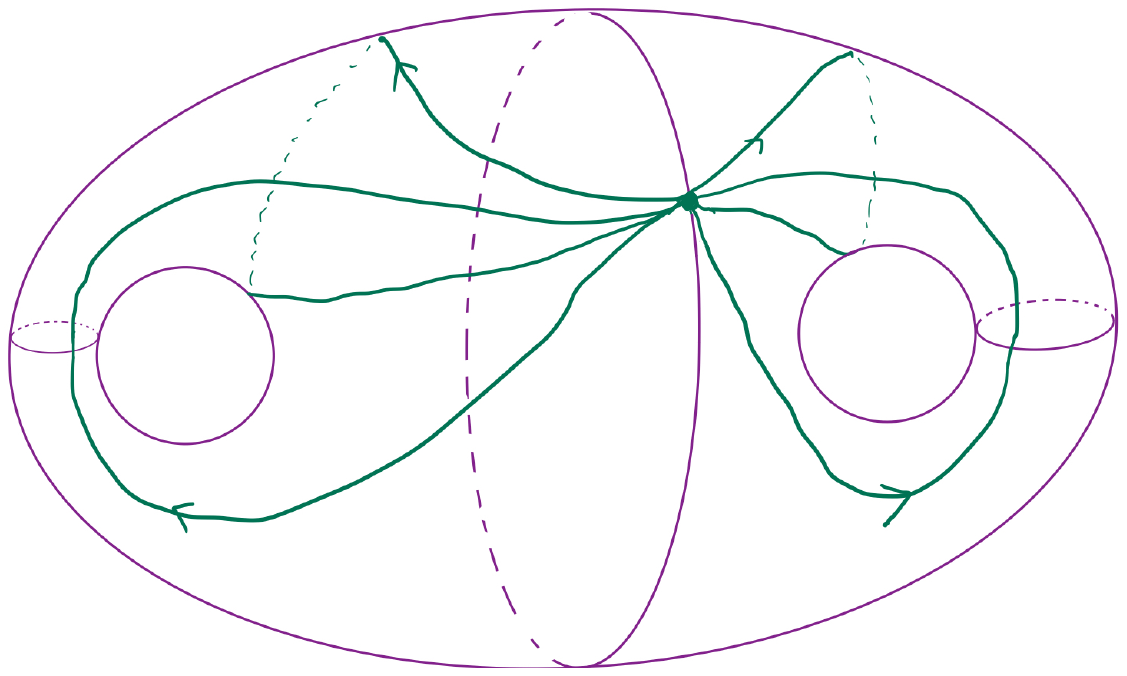}
 \caption{\label{fig0fig} The genus  two surface $F_2$}
\end{figure}

 The fundamental group of the closed oriented genus two surface $F_2$ has the standard presentation 
$$\pi_1(F_2)=\langle r_-,s_-,r_+,s_+\mid   [r_-, s_-][r_+,s_+]=1\rangle.$$
 Figure \ref{fig0fig}   indicates  the  four loops, $r_-,s_-,r_+,s_+$, representing the generators. Also shown is  a  curve $C$ separating $F_2$ into punctured tori $F_-$ and $F_+$, with $C=[r_-,s_-]=[s_+,r_+]$. 
 
The character variety $\chi(F_2)$ has non-abelian locus $\chi^*(F_2)$ a smooth 6-manifold
(Proposition \ref{dimension}). 
 The abelian locus $\chi^\ab(F_2)$   consists of representations conjugate to 
$$ r_-\mapsto e^{\alpha_-\bbi},~ s_-\mapsto e^{\beta_-\bbi},~ r_+\mapsto e^{\alpha_+\bbi},~ s_+\mapsto e^{\beta_+\bbi}.$$  From this one  sees that  
$$\chi^\ab(F_2)=\Hom\left(\ZZ^4, U(1)\right)/W= \TT^4/\{\pm 1\},$$
where $W=\{\pm 1\}$ denotes the Weyl group of $SU(2)$.  Here $-1$ acts on $\TT^4=(\RR/2\pi\ZZ)^4$ by negation, $(\alpha_-,\beta_-,\alpha_+,\beta_+)\mapsto -(\alpha_-,\beta_-,\alpha_+,\beta_+)$.   The sixteen fixed points of this involution, namely  
the homomorphisms $\pi_1(F_2)\to \{\pm 1\}$, form  the central locus $\chi^\cen(F_2)$.  The equivariant tubular neighborhood theorem, applied to $(\TT^4, \pm 1)$,  implies that $\chi^\anc(F_2)\subset \chi^\ab(F_2)$ is a smooth 4-manifold and identifies the link of $s\in \chi^\cen(F_2)$  in $\chi^\ab(F_2)$ as a cone on $\RR P^3$. 

\medskip

The link  of each $s\in \chi^\cen(F_2)$  in $\chi(F_2)$ is identified with $S^5$ in Theorem \ref{thm2.2} below.

\subsection{Six-punctured 2-sphere}

The six punctured 2-sphere $S^2\setminus \{a_1,\dots, a_6\}$ has fundamental group
$$\pi_1(S^2\setminus \{a_1,\dots, a_6\})=\langle x_1,x_2,x_3,x_4,x_5,x_6\mid   x_1x_2x_3x_4x_5x_6=1\rangle.$$ The generator  $x_i$ is the meridian encircling the
 $i$th puncture $a_i$.
Its traceless character variety, 
$$\chi(S^2,6)=\left\{ \rho\colon \pi_1\left(S^2\setminus \{ a_i\}\right)\to SU(2) \mid    \Real(\rho(x_i))=0\right \}/_\conju ,$$
has non-abelian locus $\chi^*(S^2,6)$ a smooth 6-manifold  (Proposition \ref{dimension}).  
  The abelian non-central locus, $\chi^\anc(S^2,6)$, contains sixteen points, namely the conjugacy classes  of 
$$\rho\colon  x_1\mapsto\bbi, x_2\mapsto\pm \bbi, \dots, x_5\mapsto\pm \bbi,~ x_6\mapsto \overline{ x_1 x_2 x_3 x_4 x_5}.$$
The central stratum  $\chi^\cen(S^2,6)$ is empty, and so 
$\chi^\anc(S^2,6)=\chi^\ab(S^2,6)$. In summary, $\chi(S^2,6)$ is a smooth 6-manifold away from these sixteen abelian points.
The link of each $s\in \chi^\ab(S^2,6)$  in $\chi(S^2,6)$ is identified with  the unit tangent bundle of $S^3$ in Theorem \ref{thm2.2} below.

 \subsection{The free groups of rank one and two}\label{listofch}  The character varieties of the free groups of rank one and two  are not covered by Proposition \ref{dimension}, but their  basic structure is easily understood.  

First,  $\chi^*(\ZZ)$ is empty, and $\chi(\ZZ)=\chi^\ab(\ZZ)=SU(2)/\conju$.  The map $\kappa\colon \chi(\ZZ)\to [-1,1]$ given by $\rho\mapsto \Real(\rho(1))$  is a homeomorphism which sends $\chi^\cen(\ZZ)$ to the endpoints. Since 
 $\chi^\anc(\ZZ)$ is the   space of principal orbits, it is a smooth open interval, and  $\kappa\colon \chi^\anc(\ZZ)\to (-1,1)$ a diffeomorphism.

For the free group of rank 2, $\chi(\langle r,s\rangle)=(SU(2)\times SU(2))/\conju$. The non-abelian locus $\chi^*(\langle r,s\rangle)$  coincides with the  space of principal orbits of the conjugation action, and hence is a smooth 3-manifold. 
See Section \ref{GP}   for a complete description of $\chi^*(\langle r,s\rangle)$.

 \section{$\chi(F_2)\setminus \chi^\cen(F_2)=\chi^*(F_2)\sqcup \chi^\anc(F_2)$ is   a  6-manifold.}\label{sum6}
 
This section and Appendix \ref{APPB} provide a  proof that  $\chi(F_2) \setminus \chi^\cen(F_2)$   is homeomorphic to a smooth 6-manifold, using a double branched cover  $p^*\colon \chi^*(S^2,6)\to \chi(F^2)\setminus \chi^\cen(F_2)$,
branched over   $\chi^\anc(F_2)$.

\subsection{An involution on $\chi(S^2,6)$ with orbit space $\chi(F_2)$} \label{Gtop}
The variety $\chi(S^2,6)$ admits the involution
\begin{equation}\label{nu}\nu\colon \chi(S^2,6)\to \chi(S^2,6),~(\nu\cdot \rho)(x_i)=-\rho(x_i), ~ i=1,\dots ,6.\end{equation}
Consider the map $\TT^4\to \chi(S^2,6)$ which sends $(\alpha_2, \dots, \alpha_5)$ to the conjugacy class of the representation 
 $$x_1\mapsto \bbi, ~x_i\mapsto e^{\alpha_i \bbk}\bbi\text{ for } i=2,3,4,5,\text{ and } x_6\mapsto -\bbi e^{(\alpha_2-\alpha_3+\alpha_4-\alpha_5)\bbk} .$$ 
It is easy to check that the image of this map is the fixed point set of $\nu$, which we denote by $\chi^{\rm fix}(S^2,6)$.  This map   is invariant under $(\alpha_2, \dots, \alpha_5)\mapsto (-\alpha_2, \dots, -\alpha_5)$, and induces a homeomorphism
      $$\chi^{\rm fix}(S^2,6)\cong \TT^4/\{\pm 1\}.$$ 
Since the involution $\nu$ acting on $\chi^*(S^2,6)$ is smooth,  $\chi^{\rm fix}(S^2,6)\setminus \chi^\ab(S^2,6)$   is a smooth   codimension  two submanifold of $\chi^*(S^2,6)$.

 Define  
 \begin{equation}\label{themapp} p^*\colon \chi(S^2,6)\to \chi(F_2)\end{equation}
 by 
$$p^*(\rho)\colon r_-\mapsto \rho(x_1x_2),~  
s_-\mapsto \rho(x_3^{-1}x_2^{-1}),~r_+\mapsto \rho(x_4x_5),~ s_+\mapsto \rho(x_6^{-1}x_5^{-1}).
$$
\begin{theorem}[{\cite[ Theorem 4.1]{K6}}]\label{th4.1K6}
The map $p^*$ induces a homeomorphism  
$$\chi(S^2,6)/\nu \to \chi(F_2).$$ The involution $\nu$ has codimension two fixed point set $\chi^{\rm fix}(S^2,6)$ homeomorphic to $\TT^4/\{\pm1\}$. In addition,  $p^*(\chi^\ab(S^2,6))=\chi^\cen(F_2)$ and 
 $p^*(\chi^{\rm fix}(S^2,6))= \chi^\ab(F_2)$.
\end{theorem}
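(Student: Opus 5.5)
The plan is to verify that $p^*$ is well defined and $\nu$-invariant, and then to show that its fibers are exactly the $\nu$-orbits and that it is surjective. Since $\chi(S^2,6)/\nu$ is compact and $\chi(F_2)$ is Hausdorff, the induced continuous bijection is then a homeomorphism. The statements about $\chi^{\rm fix}$ and the abelian loci are checked directly at the end.

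\emph{Well-definedness and invariance.} For a traceless unit quaternion $x$ we have $x^2=-1$ and $x^{-1}=-x$. With this, $[\rho(x_1x_2),\rho(x_3^{-1}x_2^{-1})]$ simplifies, up to a sign that can be tracked explicitly, to an expression in the single element $\rho(x_1x_2x_3)$. The same holds for the $+$ side in terms of $\rho(x_4x_5x_6)$. The relation $x_1\cdots x_6=1$ then forces $[r_-,s_-][r_+,s_+]\mapsto 1$. Each generator of $\pi_1(F_2)$ maps to a product of two of the $x_i^{\pm1}$, so replacing every $\rho(x_i)$ by $-\rho(x_i)$ does not change $p^*(\rho)$. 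Also $p^*$ commutes with conjugation, so it descends to a continuous map $\chi(S^2,6)/\nu\to\chi(F_2)$.

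\emph{Surjectivity.} Fix $\rho'$ with $\rho'(r_-)=A$ and $\rho'(s_-)=B$. We look for traceless $x_2$ with $x_1:=Ax_2^{-1}$ and $x_3:=(Bx_2)^{-1}$ both traceless. This amounts to the two linear conditions $\Real(Ax_2)=\Real(Bx_2)=0$ on $x_2\in S^2$. Each condition is a great circle, or all of $S^2$, so a solution always exists; the analogous choice of $x_5$ handles the $+$ side. It remains to arrange $x_1x_2x_3x_4x_5x_6=1$ exactly, and not merely the squared identity supplied by the surface relation. Here two freedoms are available. One may replace $x_2$ by $-x_2$, or by another solution when the circles coincide. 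One may also conjugate the $+$-side lift by elements of the centralizer of $\rho'(C)$, which fixes $\rho'$ up to conjugacy. Since both $x_1x_2x_3$ and $(x_4x_5x_6)^{-1}$ are square roots of $\pm\rho'(C)$ (with the sign fixed in the first step), the plan is to show these freedoms move $x_4x_5x_6$ to $(x_1x_2x_3)^{-1}$. This is a case analysis: $\rho'(C)\neq\pm1$, where the centralizer is a circle, versus $\rho'(C)=\pm1$.

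\emph{Injectivity modulo $\nu$.} Suppose $p^*(\rho)=p^*(\rho')$. After conjugating, the products $\rho(x_1x_2)$, $\rho(x_3^{-1}x_2^{-1})$, $\rho(x_4x_5)$, $\rho(x_6^{-1}x_5^{-1})$ agree with those of $\rho'$. A solution $x_2$ of the two linear conditions above is unique up to sign when the great circles are distinct. The residual choices are thus $x_2\mapsto -x_2$ and $x_5\mapsto -x_5$, coupled by the product relation. I will show that the closing condition forces the two sign changes to occur together, which is exactly $\nu$. The degenerate cases, where the circles coincide or the image is abelian, must be treated separately. There one uses the extra conjugation freedom to show the fiber is still a single $\nu$-orbit.

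\emph{Remaining statements.} The fixed set is computed from $\nu\cdot\rho=g\rho g^{-1}$. Here $g$ anticommutes with every $\rho(x_i)$, so $g$ is traceless and the $\rho(x_i)$ lie in the great circle perpendicular to $g$. After conjugating $g=\bbk$, this gives the stated $\TT^4$ parametrization modulo the Weyl group, so $\chi^{\rm fix}(S^2,6)\cong\TT^4/\{\pm1\}$. For the images: if all $\rho(x_i)$ lie in a circle perpendicular to $\bbk$, then the pairwise products lie in $\{e^{\theta\bbk}\}$, so $p^*$ of such a representation is abelian. Conversely, by the lifting step, an abelian $\rho'$ lifts to such a representation. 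If $\rho(x_i)=\pm\bbi$ for all $i$, the pairwise products are $\pm1$. Counting the 16 points on each side gives $p^*(\chi^\ab(S^2,6))=\chi^\cen(F_2)$.

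I expect the main obstacle to be the exact closing condition $x_1\cdots x_6=1$ in the surjectivity step, together with the matching sign bookkeeping in injectivity. This is where the central extension by $\pm1$ and the centralizer of $\rho'(C)$ must be handled carefully.
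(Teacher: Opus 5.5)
\textbf{Comparison with the paper's route.} Your route differs from the paper's. Appendix \ref{APPB} parametrizes all lifts of a fixed quadruple $(R_-,S_-,R_+,S_+)$ by a single unit vector $x_1\perp {\rm Span}(W)$. The formulas (\ref{exes}) then satisfy $x_1\cdots x_6=1$ automatically, so the only real work is the quaternion estimate $\dim{\rm Span}(W)<3$ of Lemma \ref{lem4.3}. After that, each fiber is read off as an $S^0$, $S^1$ or $S^2$ of choices of $x_1$. You instead lift the two punctured tori separately and then match $q_1=x_1x_2x_3$ with $q_2=(x_4x_5x_6)^{-1}$, both square roots of $-\rho'(C)$. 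That splitting can be made to work, and your well-definedness, fixed-set and image arguments are fine. However, the closing step, which you rightly flag as the crux, rests on a false claim and is left undone.

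\textbf{The gap in the closing step.} Conjugating the $+$-side lift by $g$ in the centralizer of $\rho'(C)$ does \emph{not} preserve $[\rho']$. It replaces $(R_+,S_+)$ by $(gR_+g^{-1},gS_+g^{-1})$ while fixing $(R_-,S_-)$, which is exactly the bending (modified Goldman twist) deformation. That deformation acts freely, modulo $\pm1$, on $\chi(F_2)_{(-1,1)}$ and sweeps out the circle fibers of $\phi$ (Propositions \ref{deadly} and \ref{fibers2}). When $\rho'(C)=1$, an arbitrary $g\in SU(2)$ can even turn an abelian $\rho'$ into a non-abelian one. So any closing argument that uses this move only shows that some \emph{other} representation lies in the image.

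\textbf{How to repair it.} The legitimate freedom is the choice of $x_2$ and $x_5$ inside their solution sets, and it suffices.
\begin{itemize}
\item If $\rho'(C)\neq 1$, both halves are non-abelian, so $x_2$ and $x_5$ are determined up to sign. Also $-\rho'(C)\neq -1$ has exactly two square roots $\pm q$; this case includes $\rho'(C)=-1$. Since $x_2\mapsto -x_2$ negates $q_1$ and $x_5\mapsto -x_5$ negates $q_2$, exactly one pair $\{(x_2,x_5),(-x_2,-x_5)\}$ of the four sign choices closes up. This gives surjectivity and injectivity modulo $\nu$ at once.
\item If $\rho'(C)=1$, write $R_-,S_-\in e^{\RR P}$ and $R_+,S_+\in e^{\RR P'}$. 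The admissible $x_2$ form $S^2\cap P^\perp$ (all of $S^2$ if $R_-,S_-=\pm1$), and similarly for $x_5$. Using $x_2e^{\theta P}=e^{-\theta P}x_2$, one gets $q_1=-R_-S_-x_2$ and $q_2=R_+S_+x_5$. These sweep out the great circles $S^2\cap P^\perp$ and $S^2\cap P'^\perp$, which always meet, so the relation can be closed.
\end{itemize}

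\textbf{The injectivity subcase.} The same formulas repair your injectivity discussion when both halves are abelian but $P\neq\pm P'$. There the stabilizer is $\{\pm1\}$, so there is no ``extra conjugation freedom.'' Instead, $q_1=q_2$ must be one of the two antipodal points of $S^2\cap P^\perp\cap P'^\perp$, so the lifts again form a single free $\nu$-orbit. Only for abelian $\rho'$ does the stabilizer act transitively on a circle or sphere of lifts.
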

For the reader's convenience, we include a short proof of Theorem \ref{th4.1K6} in Appendix \ref{APPB}.

 \begin{corollary}\label{ascor}   $\chi(F_2)\setminus  \chi^\cen(F_2)=\chi^*(F_2)\sqcup \chi^\ab(F_2)$  is   a  6-manifold. 
\end{corollary}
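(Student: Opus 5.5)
The plan is to combine Theorem \ref{th4.1K6} with the discussion of smooth double branched covers in Section \ref{branched}, after deleting the bad points. Write $X=\chi(S^2,6)\setminus \chi^\ab(S^2,6)=\chi^*(S^2,6)$. By Proposition \ref{dimension} (with $g=0$, $n=6$, so $6g+2n=12>6$), $X$ is a smooth $6$-manifold. The involution $\nu$ preserves $X$, because negating all the $\rho(x_i)$ does not change whether the image is abelian. We also need $\nu$ to be smooth on $X$. It is the restriction of the smooth map $(A_1,\dots,A_6)\mapsto(-A_1,\dots,-A_6)$ on $SU(2)^6$, which commutes with conjugation and preserves the traceless relation variety. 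It therefore descends to a smooth involution of the principal-orbit quotient.

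Next we identify the fixed point set of $\nu|_X$ and show it has codimension two. Theorem \ref{th4.1K6} says $p^*(\chi^\ab(S^2,6))=\chi^\cen(F_2)$, and that $p^*$ induces a homeomorphism $\chi(S^2,6)/\nu\cong\chi(F_2)$. Since $\nu$ preserves $\chi^\ab(S^2,6)$ and $p^*$ is a quotient map for $\nu$, it follows that $(p^*)^{-1}(\chi^\cen(F_2))=\chi^\ab(S^2,6)$. Hence $p^*$ restricts to a homeomorphism
$$X/\nu\;\cong\;\chi(F_2)\setminus\chi^\cen(F_2).$$
The fixed set of $\nu|_X$ is $\chi^{\rm fix}(S^2,6)\setminus\chi^\ab(S^2,6)$. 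This is the fixed set of a smooth involution on a manifold, so it is a smooth submanifold. From the parametrization, it is the image of $\TT^4$ minus the sixteen points of $\TT^4$ fixed by negation, divided by $\pm1$. That quotient is a $4$-manifold, so the fixed set has dimension $4$, hence codimension two in the $6$-manifold $X$.

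To make the dimension count honest, we check that the parametrization $\TT^4\setminus\{\text{16 pts}\}\to X$ is an injective immersion modulo $\pm1$. I expect this to be the main obstacle. Alternatively, we can compute the $(-1)$-eigenspace of $d\nu$ on $H^1(\pi;su(2)_{{\rm ad}\rho})$ at a fixed point: the fixed component has dimension equal to the dimension of the $(+1)$-eigenspace, and we would show this is $4$ by a Fox calculus count. Either computation is routine once set up.

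Finally, the equivariant tubular neighborhood theorem as recalled in Section \ref{branched} applies to $(X,\nu|_X)$. It shows that $X/\nu$ is a topological $6$-manifold, which carries a smooth structure making the orbit map a smooth double branched cover branched along the image of the fixed set. Transporting this structure through the homeomorphism $X/\nu\cong\chi(F_2)\setminus\chi^\cen(F_2)$ shows that $\chi(F_2)\setminus\chi^\cen(F_2)=\chi^*(F_2)\sqcup\chi^\anc(F_2)$ is a $6$-manifold, and that $p^*$ is branched over $\chi^\anc(F_2)$.
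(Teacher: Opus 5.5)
Your proposal is correct and follows the paper's proof. Like the paper, you restrict $p^*$ to $\chi^*(S^2,6)$, use that its fibers are exactly the $\nu$-orbits to get $\chi^*(S^2,6)/\nu\cong\chi(F_2)\setminus\chi^\cen(F_2)$, and apply the equivariant tubular neighborhood discussion of Section~\ref{branched} to the smooth involution $\nu$, whose fixed set has codimension two. The extra dimension check you call the main obstacle is unnecessary: the fixed set is already a smooth submanifold and, by Theorem~\ref{th4.1K6}, homeomorphic to $(\TT^4\setminus\{16\text{ pts}\})/\{\pm1\}$, so invariance of domain forces it to be $4$-dimensional (the paper simply takes codimension two from that theorem).
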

 \begin{proof}   
 The surjection  $$p^*\colon \chi^*(S^2,6) \to \chi(F_2)\setminus \chi^\cen(F_2)$$
 induces a homeomorphism 
 $$\overline{p}^*\colon \chi^*(S^2,6)/\nu \to \chi(F_2)\setminus \chi^\cen(F_2)$$
 since the fibers of $p^*$ coincide with the orbits of $\nu$.  Because $\nu$ is a smooth involution on the smooth manifold $\chi^*(S^2,6)$,   the discussion of Section \ref{branched} implies that $\chi^*(S^2,6)/\nu=\chi(F_2)\setminus  \chi^\cen(F_2)$ is   a  6-manifold. 
 \end{proof}

An explanation of the geometric origin of the formula for $p^*$ can be found in \cite{K6}. In  that article,  the map $p^*$ is defined by $p^*=(-1)^c\cdot b^*$, where $b^*$ is induced by a double branched cover $b\colon F_2\to S^2$, branched over six points, and $c\in H^1(F_2\setminus 6;\ZZ/2)$ is the cocycle which evaluates to  one one each circle enclosing one of the six branch points in $F_2$.    
 
 \section{A double branched cover $UTS^n\to S^{2n-1}$.}\label{branch}
 
 This and the following section contain the proof that each of the sixteen points in $\chi^\cen(F_2)$
 has a neighborhood in $\chi(F_2)$ homeomorphic to  $\RR^6$.  
 In this section, we prove that the differential  of the reflection on $S^n$ through its equator, acting on the unit tangent bundle of $S^n$, has  fixed point set the unit tangent bundle of $S^{n-1}$ and orbit space $S^{2n-1}$.  While we only need the $n=3$ case for this article, we include  the proof for general $n$ since it is no   harder than $n=3$.

 Define $UTS^n\subset \RR^{n+1}\times \RR^{n+1}$ to be the unit tangent bundle of $S^n$. Explicitly
 $$UTS^n=\left\{ (w_1,w_2)\in  \RR^{n+1} \times \RR^{n+1}\mid  w_1\cdot w_2=0,~ |w_1|^2=1,~ |w_2|^2=1\right\}.$$
 Denote by ${\rm proj}\colon \RR^{n+1}\to \RR^n$  the projection to the first $n$ components. 
 Given $(w_1,w_2)\in UTS^{n}$,
 $|{\rm proj}(w_1)|^2+|{\rm proj}(w_2)|^2\ne 0$, since $w_1$ and $w_2$ are linearly independent. Hence the function
\begin{equation*} p\colon UTS^n\to S^{2n-1},~
 p(w_1,w_2)=\frac{({\rm proj}(w_1),{\rm proj}(w_2))}{\left(|{\rm proj}(w_1)|^2+|{\rm proj}(w_2)|^2\right)^{\frac 1 2}}
 \end{equation*}
 is defined and smooth.

\begin{theorem}\label{essfive} Let    $\tau \colon  UTS^n\to UTS^n$ be the involution induced by the reflection through the hyperplane $\RR^n\oplus \{ 0\}\subset \RR^{n+1}$. Then the fixed set of $\tau$ is $UTS^{n-1}$,  $p$ induces a homeomorphism $UTS^{n+1}/\tau\cong S^{2n-1}$,  and   $p$ is a smooth double branched cover.  \end{theorem}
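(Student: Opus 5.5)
Write $w_j=(u_j,t_j)$ with $u_j={\rm proj}(w_j)\in\RR^n$ and $t_j\in\RR$, so that $\tau(w_1,w_2)=((u_1,-t_1),(u_2,-t_2))$. (The statement's $UTS^{n+1}/\tau$ should read $UTS^n/\tau$.) Two things are immediate. First, $p\circ\tau=p$. Second, the fixed set is $\{t_1=t_2=0\}$, which is exactly the set of orthonormal pairs in $\RR^n$, i.e.\ $UTS^{n-1}$.

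The plan has four steps: show $p$ is surjective, show its fibers are precisely the $\tau$-orbits, verify the branched cover structure near the fixed set, and conclude.

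\emph{Fibers and surjectivity.} Fix $(x,y)\in S^{2n-1}\subset\RR^n\times\RR^n$ and look for $\lambda>0$ and $t_1,t_2$ with $w_1=(\lambda x,t_1)$ and $w_2=(\lambda y,t_2)$ orthonormal. The conditions are
$$\lambda^2|x|^2+t_1^2=1,\quad \lambda^2|y|^2+t_2^2=1,\quad \lambda^2\,x\cdot y+t_1t_2=0.$$
The vector $(t_1,t_2)$ is then determined up to an overall sign by $\lambda$, which is exactly the $\tau$-ambiguity. Eliminating the $t_j$ gives
$$(1-\lambda^2|x|^2)(1-\lambda^2|y|^2)=\lambda^4(x\cdot y)^2.$$
Set $\mu=\lambda^2$. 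This reads $1-\mu+\mu^2 D=0$, where $D=|x|^2|y|^2-(x\cdot y)^2\ge 0$ is the Gram determinant, and we have used $|x|^2+|y|^2=1$.

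We also need $\mu\le 1/\max(|x|^2,|y|^2)$. At $\mu=1$ the left side equals $D\ge0$, and at the left endpoint $\mu=1/\max(|x|^2,|y|^2)$ it is $-\mu^2(x\cdot y)^2\le0$. Since $D\le 1/4$, the discriminant $1-4D$ is nonnegative. One then checks that exactly one root $\mu$ lies in the admissible range, namely the smaller root $\mu=(1-\sqrt{1-4D})/(2D)$, or $\mu=1$ when $D=0$. This root depends continuously on $(x,y)$.

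For the admissible $\mu$, the sign of $t_1t_2$ is forced, so $(t_1,t_2)$ is determined up to $\pm$. Hence each fiber is a single $\tau$-orbit, and $p$ is surjective.

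This root-selection argument is the main obstacle: one must show uniqueness of the admissible root. The cleanest route I expect is to note that $1-\mu+\mu^2D$ has positive second derivative in $\mu$ and compare its values at the endpoints computed above. Degenerate cases, for instance $x$ and $y$ parallel, and the borderline $D=1/4$ need separate care.

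\emph{Conclusion.} Since $UTS^n$ is compact and $S^{2n-1}$ is Hausdorff, the induced continuous bijection $UTS^n/\tau\to S^{2n-1}$ is a homeomorphism.

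\emph{Smooth branched cover.} By the definition in Section~\ref{branched}, it remains to check two things: that the fixed set $UTS^{n-1}$ has codimension two, since $(2n-1)-(2n-3)=2$, and that $p$ is smooth, which was already observed. If a stronger local statement is wanted, one can compute $dp$ at points with $(t_1,t_2)\ne 0$ and show it is invertible, for example by noting that $\mu$ is a smooth function of $(x,y)$ wherever the admissible root is simple. Near the fixed set one expects $p$ to look like $z\mapsto z^2$ in the normal $(t_1,t_2)$-plane.
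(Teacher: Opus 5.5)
Your proposal is correct and is essentially the paper's proof. Both arguments write a fiber point as $(\lambda x+t_1e,\lambda y+t_2e)$ and eliminate the $t_j$ to get $f(\mu)=D\mu^2-\mu+1=0$, where $\mu=\lambda^2$ and $D$ is the Gram determinant. Both then take the smaller root and use the forced sign of $t_1t_2=-\mu\,(x\cdot y)$ to see that each fiber is a single $\tau$-orbit, after which the definition in Section~\ref{branched} finishes the proof.

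The only difference is in selecting the root. You evaluate $f$ at $M=1/\max(|x|^2,|y|^2)$ and use convexity; note that $M$ is the right endpoint of the admissible range, since $M\in[1,2]$. The paper instead uses the bound $\lambda^2=2-(t_1^2+t_2^2)\le 2$ together with Lemma~\ref{Cbs}.

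There is one small case to close. When $x\cdot y=0$ you get $f(M)=0$, and convexity alone does not rule out $M$ being the larger root. Add either of the following:
\begin{itemize}
\item $M=\min(1/|x|^2,1/|y|^2)$ is the smaller of the two explicit roots;
\item the paper's bound: for $D>0$, the larger root is at least $1/(2D)\ge 2\ge M$, with equality only for the double root.
\end{itemize}
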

\begin{proof}  Write $\RR^{n+1}$ as $\RR^n\oplus \RR e$; then  $\tau(v+te)=v-re$. The differential of $\tau$ acting on $UTS^n\subset \RR^{n+1}\oplus \RR^{n+1}$, which we also denote by $\tau$, is given by $$\tau(v_1+r_1e,v_2+r_2 e) =(v_1-r_1e,v_2-r_2e) .$$
The fixed set of $\tau$ is  $UTS^{n-1}$. 
Referring to Section \ref{branched}, it suffices to show that the fibers of $p\colon UTS^n\to S^{2n-1}$ coincide with the orbits of $\tau$.  That $p\circ \tau=p$ is clear.  Hence the fibers of $p$ are (possibly empty) unions of $\tau$ orbits.

\medskip

Fix $ v=(v_1,v_2)\in S^{2n-1}\subset \RR^n\oplus \RR^n$.   Hence $|v_1|^2+|v_2|^2=1.$ 
 If $v_1=0$ or $v_2=0$, it is easy to see that
 $p^{-1}(v)$ equals $\{(\pm e, v_2)\}$ or 
 $p^{-1}(v)=\{(v_1,\pm e)\}$, in either case precisely one $\tau$ orbit.
Assume for the rest of the proof that  both $v_i$ are non-zero, and  let $\gamma$ denote the angle between $v_1$ and $v_2$.

Denote by $S_v\subset \RR^3$ the set of triples $(\alpha,\beta,\lambda)\in \RR^3$ satisfying
\begin{equation}\label{three}
\lambda^2 (v_1\cdot v_2)+\al\be=0,~\alpha^2 =1-\lambda^2|v_1|^2,~
\be^2=1-\lambda^2|v_2|^2, ~ |\alpha|\leq 1,~|\beta|\leq 1, ~0\leq \lambda\leq\sqrt{2}.
\end{equation}
Then there is a bijection of $S_v\to p^{-1}(v)$ given by
$$(\alpha,\beta,\lambda)\mapsto
(\lambda v_1+ \alpha e, \lambda v_2+ \beta e),$$
and $\tau$ pulls back to $(\alpha,\beta,\lambda)\mapsto (-\alpha,-\beta,\lambda)$ on $S_v$.
Any  $(\alpha,\be,\lambda)\in S_v$ satisfies 
$$\lambda^2=2-(\alpha^2+\beta^2) \text{  and }
 \lambda^4  (v_1\cdot v_2)^2=\alpha^2\beta^2=
(1-\lambda^2|v_1|^2)(1-\lambda^2|v_2|^2).
$$
The second equality can be rewritten
$$  \lambda^4 A-\lambda^2+1=0$$ where
$$
A=A(v_1,v_2)=|v_1|^2|v_2|^2-(v_1\cdot v_2)^2=|v_1|^2|v_2|^2 \sin^2\gamma.
$$
  Since $|v_1|^2|v_2|^2=|v_1|^2(1-|v_1|^2)$
and the function $f(x)=x^2(1-x^2)$ is non-negative on $[0,1]$ with maximum value $\tfrac 1 4$, it follows that $0\leq A\leq \tfrac 1  4.$

\medskip
 
If $A=0$, then $\lambda=1$ and $v_1=\ep v_2$ for some $\ep\in \{\pm 1\}$.  
This implies  $$(\al,\be,\lambda) =(\pm \sqrt{1-|v_1|^2}, \pm \sqrt{1-|v_2|^2},1)=(\ep_1 |v_2|, \ep_2 |v_1|,1) $$
for some $\ep_1,\ep_2\in\{\pm 1\}$.
The first equation in (\ref{three}) implies that 
$$\ep_1\ep_2|v_1| |v_2|=\alpha\beta=-\lambda^2(v_1\cdot v_2)=-(v_1\cdot v_2)=-\ep|v_2|^2.$$
  and so $\ep_1\ep_2=-\ep$.  Hence,  $S_v=\{(\ep_1|v_2|,\ep_1(-\ep |v_1|),1)$ and so
  $p^{-1}(v)$ consists of precisely one $\tau$ orbit.

\medskip

Assume instead that  $A> 0$.  The quadratic formula implies that any solution 
  $(\alpha, \beta, \lambda)$ satisfies 
  $$\lambda^2=\frac{1+\ep_3\sqrt{1-4A}}{2A}
  $$ for some $\ep_3\in\{\pm 1\}$.
Since $\tfrac{1+ \sqrt{1-  4A  }}{ 2A}\ge \tfrac{1}{ 2A}\ge 2$ and 
$\lambda^2\leq 2$, it follows that either $A=\tfrac 1 4$ or $\ep_3=-1$.
If $A=\tfrac 1 4$, then the choice of $\ep_3$ does not matter and so $\lambda^2$ satisfies this equation with  $\ep_3=-1$.

\begin{lemma} \label{Cbs}
 $\tfrac{1-  \sqrt{1-  4A  }}{2A}|v_1|^2\leq 1$ and $\tfrac{1-  \sqrt{1-  4A  }}{2A}|v_2|^2\leq1$. 
\end{lemma}
\begin{proof}  Since $A=|v_1|^2|v_2|^2 \sin^2\gamma$ and $|v_1|^2|v_2|^2=|v_1|^2(1-|v_1|^2)$, it follows that
$$
-\sin^2\gamma|v_1|^2|v_2|^2\ge- |v_1|^2|v_2|^2=-|v_1|^2(1-|v_1|^2)
$$
which implies $$
1-4A=1-4|v_1|^2|v_2|^2 \sin^2\gamma\ge 1-4|v_1|^2+ 4 |v_1|^4=(1-2|v_1|^2)^2
$$
and therefore
$\sqrt{1-4A}\ge  2|v_1|^2-1.
$
Hence 
$1+\sqrt{1-4A}\ge 2|v_1|^2 
$ 
and so
$$
2(1+\sqrt{1-4A})|v_2|^2\sin^2\gamma\ge 4 |v_1|^2 |v_2|^2\sin^2\gamma=4A=(1+\sqrt{1-4A})(1-\sqrt{1-4A}).
$$
Since $(1+\sqrt{1-4A})\ge 1$, this implies
$$1-\sqrt{1-4A}\leq 2|v_2|^2\sin^2\gamma = \frac{2A}{|v_1|^2}, $$
proving the first assertion. The second assertion is proven similarly, reversing the 
roles of $v_1$ and $v_2$.
\end{proof}

Lemma \ref{Cbs}  and the three equations displayed in (\ref{three}) imply
\begin{equation*} \label{sols}
 S_v= 
\left\{
\left(
\pm \sqrt{ 1-\tfrac{1-  \sqrt{1-  4A  }}{2A}|v_1|^2},
\pm \sqrt{ 1-\tfrac{1-  \sqrt{1-  4A  }}{2A}|v_2|^2}
, 
\sqrt{\tfrac{1-  \sqrt{1-  4A  }}{ 2A}}\right)
\right\}.
\end{equation*} 
Therefore $S_v$ is non-empty and consists of at most four points, according to  
 the two possible signs for $\alpha$ and $\beta$.     If $v_1\cdot v_2\neq 0$, 
 the equation $\alpha\beta=-\lambda^2  (v_1\cdot v_2)$
 shows that the sign of $\alpha$ is determined by that of $\beta$. Thus, $p^{-1}(v)$ consists of a single orbit.
 On the other hand, if $v_1\cdot v_2=0$, then  at least one of $\alpha,\beta$ equals zero, so $S_v$  contains  one or two points, and the fiber again consists of a single orbit. \end{proof}

The fixed point set of $\tau$, $UTS^{n-1}$, is embedded by $p$ with image 
\begin{equation}\label{fix4}
\{v=(v_1, v_2)\in S^{2n-1}\subset \RR^n\oplus\RR^n  \mid   |v_1|=\tfrac{1}{\sqrt{2}},~ |v_2|=\tfrac{1}{\sqrt{2}}, ~v_1\cdot v_2=0\}.\end{equation}

  \section{$\chi(F_2)$ is a topological manifold.}\label{singpt}
  
  In this section we prove that $\chi(F_2)$ is a manifold near $\chi^{cen}(F_2).$ We show that the link of each central representation is homeomorphic to $S^5$  by identifying its double branched cover equivariantly with $(UTS^3,\tau)$ for
  $\tau$ the differential of reflection of $S^3$ through a hyperplane,  and applying Theorem \ref{essfive}.  
 The proof, given in Section \ref{5.2}, involves  identifying various   orbit  spaces  of $O(2)$ actions  by making equivariant identifications of prequotients.  For clarity, we set the stage by listing the $O(2)$-spaces that occur in the argument in Section \ref{5.1}.

\subsection{Actions of $O(2)$}   \label{5.1} Consider $O(2)$, expressed as the  semidirect product of the group $S^1$  and $\ZZ/2$:
$$O(2)=S^1\ltimes \{1,R\} \text{, where }  R^2=1 \text{ and } Re^{\theta\bbi}R^{-1}=e^{-\theta\bbi}.$$   
\begin{itemize}
\item  Let $O(2)$ act  on $(S^2)^4$ by 
\begin{align} \label{act1} e^{\theta\bbi} \cdot (X_1,X_2,X_3,X_4)&=\left(e^{\tfrac{\theta}{2}\bbi}X_1e^{-\tfrac{\theta}{2}\bbi},e^{\tfrac{\theta}{2}\bbi}X_2e^{-\tfrac{\theta}{2}\bbi},e^{\tfrac{\theta}{2}\bbi}X_3e^{-\tfrac{\theta}{2}\bbi},e^{\tfrac{\theta}{2}\bbi}X_4e^{-\tfrac{\theta}{2}\bbi}\right),\\
 R\cdot (X_1,X_2,X_3,X_4)&=\left(-\bbk X_1\overline{ \bbk},-\bbk X_2 \overline{ \bbk},-\bbk X_3 \overline{ \bbk},-\bbk X_4 \overline{ \bbk}\right)\nonumber.\end{align}
\item Let $O(2)$ act on  $\CC^4$ by rotation and complex conjugation: \begin{equation}\label{act2} e^{\theta\bbi}\cdot(z_1,z_2,z_3,z_4)=\left(e^{\theta\bbi} z_1,e^{\theta\bbi} z_2,e^{\theta\bbi} z_3,e^{\theta\bbi} z_4\right) , ~R\cdot (z_1,z_2,z_3,z_4)=\left(\overline{z_1},\overline{z_2},\overline{z_3},\overline{z_4}\right).\end{equation}
\item Let  $O(2)$ act on $\HH\times \HH$ by 
\begin{equation} \label{O2HH} e^{\theta \bbi}\cdot  (A,B) = (e^{\theta \bbi} A, e^{\theta \bbi } B) \text{ and } R\cdot (A,B)=(\bbj B \overline{\bbj}, \bbj A \overline{\bbj}).\end{equation} 
This action leaves ${\rm Cone}(S^3\times S^3)\subset \HH \times \HH$ invariant.
 We mean here the infinite cone, that is, all non-negative real multiples of points in $S^3\times S^3$.

\item Extend the involution $(\RR, R\cdot x= -x)$ to  an $O(2)$ action by letting
$S^1$ act trivially.
\end{itemize}

\subsection{The link of a central representation in $\chi(F_2)$ is $S^5$}\label{5.2} 
\begin{theorem} \label{thm2.2}
For each $s\in \chi^{\rm ab}(S^2,6)$, 
there exists a  $\ZZ/2$-equivariant diffeomorphism
$$ ({\rm Lk}(s;\chi(S^2,6)),\nu)  \cong  \big(UTS^3, \tau),$$  where 
 $\nu$ is the restriction  of the involution $\nu\colon   \chi(S^2,6) \to  \chi(S^2,6)$ defined in (\ref{nu}) to ${\rm Lk}(s;\chi(S^2,6))$, and $\tau$ is the differential of reflection of $S^3$ through a hyperplane. 
Hence,  $ p^*(s)\in \chi^{\rm cen}(F_2)$ has a neighborhood in $\chi(F_2)$ homeomorphic to a cone on $S^5$, so $\chi(F_2)$ is   a   closed (topological) 6-manifold.  
\end{theorem}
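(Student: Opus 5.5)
By symmetry, it suffices to treat the abelian point $s\colon x_i\mapsto \bbi$ for $i\le 5$, $x_6\mapsto -\bbi$. The sign changes $x_i\mapsto -x_i$ in pairs, combined with conjugation, permute the sixteen abelian points and commute with $\nu$. So the other fifteen cases follow from this one.

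\textbf{Step 1: a concrete model of the link.} Parametrize traceless representations near $s$ by the first five images $X_1,\dots,X_5\in S^2$, subject to the single condition that $X_1X_2X_3X_4X_5$ is traceless. After conjugating, I fix $X_1=\bbi$. The stabilizer of $s$ is $U(1)=\{e^{\theta\bbi}\}$, and it acts on $(X_2,\dots,X_5)$ near $(\bbi,\dots,\bbi)$ by conjugation. This is the $S^1$-part of action (\ref{act1}); the half angles appear because $e^{\theta\bbi}$ and $-e^{\theta\bbi}$ act identically. The tangent space to $(S^2)^4$ at this point is $\CC^4$, via $X_j\approx \bbi+z_j\bbj$, and conjugation by $e^{\theta\bbi/2}$ becomes multiplication by $e^{\theta\bbi}$. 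This gives the $S^1$-part of (\ref{act2}).

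The tracelessness of the product is one real equation. Its linearization vanishes at $s$, so I expand it to second order. The product $X_1\cdots X_5$ is, to leading order, a signed sum of the $z_j\bbj$ with alternating conjugations. Its real part is quadratic in $z$, of the form $\Real\bigl(\sum \pm z_j\bar z_k\bigr)$ plus correction terms. I will diagonalize this form. The expectation is that, after a unitary change of coordinates, the constraint reads $|w_1|^2+|w_2|^2=|w_3|^2+|w_4|^2$ on $\CC^4=\HH\times\HH$, up to higher order terms.

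Then the link is the intersection of the unit sphere with ${\rm Cone}(S^3\times S^3)$, modulo $S^1$. I identify $(S^3\times S^3)/S^1$, with the diagonal left action of $e^{\theta\bbi}$, with $UTS^3$. Writing the pair as $(A,B)$, the element $A^{-1}B$ is invariant and carries the $S^3$ factor. The conjugation residue $\bar A\bbi A\in S^2$ then supplies a unit tangent vector, via the standard identification of $UTS^3$ with $S^3\times S^2$.

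\textbf{Step 2: compatibility with $\nu$.} The involution $\nu$ negates all $X_j$. To return to the slice $X_1=\bbi$, I conjugate by $\bbk$; this is exactly the $R$-part of (\ref{act1}). On the tangent coordinates it becomes complex conjugation, which is (\ref{act2}). In the $\HH\times\HH$ coordinates it becomes the $R$ of (\ref{O2HH}). I then check that $R$ descends on $UTS^3$ to the differential of a hyperplane reflection: $A^{-1}B\mapsto$ a conjugated inverse is a reflection of $S^3$, and the tangent vector transforms by its differential. The $\RR$ item with $R\cdot x=-x$ will be used to track the radial and sign data, for instance the cone parameter.

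\textbf{Step 3: removing the higher order terms.} This is the main obstacle. The quadratic model only describes the tangent cone, so I must show the actual link is $O(2)$-equivariantly diffeomorphic to the model link for small $\epsilon$. My first attempt is an equivariant Morse lemma. The constraint function $f$ is $O(2)$-invariant, with a nondegenerate quadratic part of signature $(4,4)$. Averaging over the compact group $O(2)$, I produce an equivariant coordinate change bringing $f$ exactly to its quadratic part near $0$. The link is then literally the model link. If that fails, the fallback is to note that $f^{-1}(0)\cap\partial U_\epsilon$ is a transverse intersection for small $\epsilon$, and to isotope it equivariantly to the model along the straight-line homotopy $f_t=f_2+t(f-f_2)$.

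\textbf{Step 4: conclusion.} Combining the steps gives $({\rm Lk}(s;\chi(S^2,6)),\nu)\cong(UTS^3,\tau)$. Theorem \ref{essfive} then shows that the link of $p^*(s)$ is $UTS^3/\tau\cong S^5$. Hence $p^*(s)$ has a neighborhood homeomorphic to a cone on $S^5$. Together with Corollary \ref{ascor}, this shows $\chi(F_2)$ is a closed topological $6$-manifold.
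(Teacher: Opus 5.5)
Your route is the paper's. You take a slice with one generator fixed at $\bbi$ and use the chart $X_j=(\bbi+z_j\bbj)/\|\bbi+z_j\bbj\|$, in which the $R$ of (\ref{act1}) becomes complex conjugation. You reduce the zero set of the constraint to the cone of its quadratic part, identify that cone with ${\rm Cone}(S^3\times S^3)$, pass to $UTS^3$, and apply Theorem~\ref{essfive}. Your reduction to a single abelian point by paired sign changes is fine.

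However, Step 3 rests on a false premise. The constraint $f$ is $S^1$-invariant but not $O(2)$-invariant: in fact $f\circ R=-f$. In your chart its quadratic part is $H(z)=\sum_{j<k}(-1)^{k-j-1}\Real(\bbi z_j\bar z_k)=\Real(z^*Qz)$, with $Q$ Hermitian and $\overline{Q}=-Q$. The coefficients are imaginary, not the real $\pm1$ you guessed, so $H(\bar z)=-H(z)$. The item $(\RR,R\cdot x=-x)$ in the paper's list records exactly this sign: the constraint is $O(2)$-equivariant into $\RR$ with the sign action. It has nothing to do with the cone parameter.

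Consequently the invariant Morse lemma cannot be quoted for $G=O(2)$. The repair is easy. In Moser's method with $f_t=H+t(f-H)$, if $X_t$ solves $df_t(X_t)=H-f$, then so does $R_*X_t$, because both sides change sign under $R$. So the $O(2)$-average still solves the equation and gives an equivariant $\phi$ with $f\circ\phi=H$. The paper instead applies the $S^1$-invariant Morse--Bott lemma to the family $H+t\,\Real(\bbi z_1\bar z_2z_3\bar z_4)$. It then averages only the pushed-forward $\partial_t$ field with its $R$-image, which stays tangent to level sets because $R$ permutes them.

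The same sign is what your Step 2 needs but does not justify. Since $\overline{Q}=-Q$, there is an orthogonal eigenbasis $v_1,v_2,\bar v_1,\bar v_2$ with $Qv_i=\lambda_iv_i$ and $Q\bar v_i=-\lambda_i\bar v_i$. So conjugation interchanges the positive and negative eigenspaces. That is why, under $\psi(z_1+z_2\bbj,w_1+w_2\bbj)=z_1v_1+z_2v_2+w_1\bar v_1+w_2\bar v_2$, conjugation becomes the factor swap $R$ of (\ref{O2HH}). An arbitrary unitary diagonalization will not achieve this, and under your stated invariance $R$ would preserve the two factors instead.

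Nondegeneracy does not follow from symmetry and must be computed. Here $Q=\tfrac{\bbi}{2}M$ with $M$ real antisymmetric of Pfaffian $1$, and signature $(4,4)$ is then automatic.

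Finally, when passing to $UTS^3$, use right translation by $C=\bar AB$. The map $(A,B)\mapsto(\bar AB,\bar A\bbi A\,\bar AB)=(\bar AB,\bar A\bbi B)$ carries $R$ exactly to the differential of reflection through ${\rm Span}\{1,\bbi,\bbk\}$. The left-translated vector $\bar AB\,\bar A\bbi A$ does not. With these corrections your argument coincides with the paper's.
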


\begin{proof} 
 Define $U\colon (S^2)^4\to \RR$ by
 $$U(  X_1,X_2,X_3,X_4)=\Real\left( \overline{X_1X_2X_3X_4\bbi} \right).$$
Then $U$ is $O(2)$-equivariant with respect to (\ref{act1}) and so the  subspace $U^{-1}(0)$
is $O(2)$-invariant. 
 
Define a map $I\colon (S^2)^4\to \chi(\langle x_1, \dots, x_6\rangle)$ by
$$I(X_1,X_2,X_3,X_4)\colon \left[ x_1\mapsto X_1, ~x_2\mapsto X_2,~x_3\mapsto X_3,~x_4\mapsto X_4, ~x_5\mapsto\bbi,
~x_6\mapsto \overline{X_1X_2X_3X_4\bbi}~\right].$$
 \begin{lemma}\label{Dinv}
 The map $I$ satisfies $\nu\circ I= I\circ  R$, with $R$ as in (\ref{act1}). Moreover, $I$
induces homeomorphisms
$$U^{-1}(0)/{S^1}\to \chi(S^2,6),
\text{ and  }
U^{-1}(0)/O(2)\to \chi(F_2).
$$
\end{lemma}
\begin{proof}
Recall that $\pi_1(S^2\setminus 6)=\langle x_1,\dots ,x_6\mid x_6=\overline{x_1x_2x_3x_4x_5}\rangle$.    It follows that $I$ maps
$U^{-1}(0)$ into the subset $\chi(S^2,6)$ of $ \chi(\langle x_1, \dots, x_6\rangle)$.
Any $\rho\in \chi(S^2,6)$ may be conjugated so that $\rho(x_5)=\bbi$, and hence $I$ surjects to
$\chi(S^2,6)$. The stabilizer of $\bbi$ is
$S^1\subset SU(2)$ and therefore   $I$ induces a homeomorphism
$U^{-1}(0)/{S^1}\to \chi(S^2,6).
$
Moreover, 
\begin{eqnarray*} 
\lefteqn{I\left(  R\cdot (X_1,X_2,X_3,X_4)\right) }\\
&=&\left[x_1\mapsto -\bbk X_1\overline{\bbk}, ~x_2\mapsto-\bbk X_2\overline{\bbk}, ~x_3\mapsto-\bbk X_3\overline{\bbk},
~x_4\mapsto-\bbk X_4\overline{\bbk}, ~x_5\mapsto\bbi, ~x_6\mapsto -\bbk \overline{X_1 X_2 X_3 X_4 \bbi}  ~ \overline{\bbk}  ~\right]  
\\
&=& \left[
 x_1\mapsto -  X_1 , ~x_2\mapsto-  X_2 , ~x_3\mapsto -X_3 ,~
x_4\mapsto - X_4 , ~x_5\mapsto-\bbi, \sim x_6 \mapsto -\overline{X_1 X_2 X_3 X_4 \bbi} ~\right] \\
&=& \nu ( I(X_1,X_2,X_3,X_4)).
\end{eqnarray*} 
Hence $ I$ induces a homeomorphism
$$U^{-1}(0)/O(2)=(U^{-1}(0)/S^1)/\langle R\rangle \overset{I}{\underset{\cong}{\to}} \chi(S^2,6)/\langle \nu\rangle\overset{p^*}{\underset{\cong}{\to}} \chi(F_2).
$$
\end{proof}
We prove the local claim in Theorem \ref{thm2.2}  for $s=I(\bbi,\bbi,\bbi,\bbi)$; the cases  $I(\pm\bbi,\pm \bbi,\pm\bbi,\pm\bbi)\in\chi^\ab(S^2,6)$ proceed similarly.   
 Denote by $\mathbb{B}\subset \CC^2$ the ball of radius $\tfrac 1 2$ and  $\mathbb{B}(\ep)$ of radius $\ep>0$ centered at the origin.  These are $O(2)$ invariant with respect to the $O(2)$ action (\ref{act2}).
  The map
 $$
e\colon  \mathbb{B}\to (S^2)^4, ~e(z_1,z_2,z_3,z_4)
=\left(\tfrac{\bbi+z_1\bbj }{\|\bbi+z_1\bbj\|},\tfrac{\bbi+z_2\bbj }{\|\bbi+z_2\bbj\|},\tfrac{\bbi+z_3\bbj }{\|\bbi+z_3\bbj\|},\tfrac{\bbi+z_3\bbj }{\|\bbi+z_3\bbj\|}\right)
 $$
 embeds $\mathbb{B}$,  $O(2)$-equivariantly,  onto    a neighborhood of $ (\bbi, \bbi, \bbi, \bbi)\in (S^2)^4$, equipped with the action (\ref{act1}).

Let $S\colon \mathbb{B}\to \RR$ denote the $O(2)$-equivariant map: $$S(z_1,z_2,z_3,z_4)=\left(\left\| (\bbi+z_1\bbj) (\bbi+z_2\bbj) (\bbi+z_3\bbj) (\bbi+z_4\bbj)\bbi\right\|\right)
 ( U\circ e)(z_1,z_2,z_3,z_4).$$  By construction, $e$  equivariantly embeds $S^{-1}(0)$ with image a neighborhood of $(\bbi,\bbi,\bbi,\bbi)$ in $U^{-1}(0)$, since
 the scaling factor in the definition of $S$ is non-vanishing. 
 
  It follows from Lemma \ref{Dinv} that 
 $e\circ I$ induces  an $R$-equivariant embedding $S^{-1}(0)/S^1\hookrightarrow\chi(S^2,6)$ 
onto a neighborhood of $s=I(\bbi,\bbi,\bbi,\bbi)$, and hence   also an embedding $S^{-1}(0)/O(2)\hookrightarrow \chi(F_2)$
onto a neighborhood of $p^*(s).$   Thus we turn our attention to identifying $(S^{-1}(0),O(2))$ up to equivariant homeomorphism. 
 
  It is straightforward to simplify 
 \begin{align*}S(z_1,z_2,z_3,z_4)&=\Real((\bbi+z_1\bbj) (\bbi+z_2\bbj) (\bbi+z_3\bbj) (\bbi+z_4\bbj)\bbi)\\
 &=  H(z)  +
  \Real(\bbi z_1\overline{z_2} z_3\overline{z_4})
 ,\end{align*}
 where  
 $$
H(z)=\Real(z^* Q z) \text{ with } Q= \frac{\bbi}{2}
\footnotesize
\setlength{\arraycolsep}{2.5pt}
\medmuskip = 1mu 
 \begin{pmatrix} 
0&-1 & 1&-1 \\ 
1&0 &-1&1 \\
-1& 1& 0& -1\\
1&-1 & 1&0
\end{pmatrix}.
$$

\medskip

Define the  $O(2)$-equivariant quartic polynomial  map $\mathbb{S}\colon \CC^4\times[0,1]\to \RR$ by
$$\mathbb{S}(z,t)=H(z)  +
t \Real(\bbi z_1\overline{z_2} z_3\overline{z_4}).$$  
Since $\det Q\ne 0$,   $S_t:=\mathbb{S}(-,t)$ has  an isolated Morse singularity with Hessian equal to $H(z)$ at the origin for every $t\in [0,1]$.
When $t=1, S_1=S$.  When $t=0$, $S_0=H={\rm Hess}(S_t)_0$ and so $S_0^{-1}(0)=H^{-1}(0)$ is a cone on $H^{-1}(0)\cap \partial \mathbb{B}$.  Choose $\ep_0>0$ so that for any $t\in I$, $S_t$ has no   critical points other than $0$ on $\BB(\ep_0)$.

 Let $\pi\colon \mathbb{B}\times [0,1]\to [0,1]$ denote the projection, and   $\BB^*(\ep)=\BB(\ep)\setminus\{0\}$ the punctured ball.    
The signature of $H$ is zero since $S(z,t)$ vanishes on $\RR^4\subset \CC^4$. The {\em invariant }Morse-Bott lemma \cite[Lemma 4.1]{wasserman}, see also \cite{Palais, BH},  implies that  there exists an $0<\ep\leq \ep_0$, a   decomposition $\CC^4= V_+\oplus V_-$ into 4-dimensional, real, $S^1$-invariant orthogonal subspaces,
and a smooth $S^1$-equivariant embedding $\phi\colon \BB(\ep) \times [0,1]\to \BB(\ep_0)\times [0,1]$ of the form $\phi(x,t)=(\phi_1(x,t), t)$, satisfying 
$$\SSS\circ\phi(x,t)=\|x_+\|^2 - \|x_-\|^2 \text{ and }  \phi(0,t)=(0,t),
$$
where $x_\pm\in V_\pm\cap \BB(\ep)$, $x=x_+ +x_-$.

Let $\tfrac{\del}{\del t}$ denote the constant vector field on $\BB(\ep)\times [0,1]$ in the interval direction.  Let $\phi_*(\tfrac{\del}{\del t})$ denote the push-forward to the image of the embedding $\phi$.  
 The smooth vector field on $\phi(\BB(\ep) \times [0,1])$ defined by 
$$X(z,t) =\tfrac{1}{2} \left(\phi_* (\tfrac{\del}{\del t}) + R_* \phi_* (\tfrac{\del}{\del t}) \right) $$
  is $O(2)$-invariant and satisfies
$$d\SSS(X)=\tfrac{1}{2}\left( \tfrac{\del (\SSS\circ \phi)}{\del t} + \tfrac{\del (\SSS \circ R \circ \phi)}{\del t} \right) =\tfrac{1}{2}\left( \tfrac{\del (\SSS\circ \phi)}{\del t} + \tfrac{\del (-\SSS  \circ \phi)}{\del t} \right)=0  , d\pi( X)=\tfrac{\del}{\del t}, \text{ and }  X(0,t)=\tfrac{\del}{\del t}.$$

The flow of $X$ preserves the level sets of $\SSS$ and the critical set $\{0\}\times [0,1]$ is a flow line.  Choose $0<\epsilon_1 \leq \epsilon_0$ so that $\BB(\epsilon_1)\times [0,1] \subset \phi(\BB(\epsilon) \times [0,1])$ and so that the time one flow of $X$ defines a $O(2)$-equivariant smooth embedding $E\colon \BB(\epsilon_1)\times \{0\} \hookrightarrow \BB(\epsilon_0)\times \{1\}$.  The restriction $E\colon S_0 ^{-1}(0)\cap \BB(\epsilon_1) \to S_1 ^{-1}(0) \cap \BB(\epsilon_0)$ is an $O(2)$-equivariant homeomorphism onto its image.

 It follows that  $0$ has a neighborhood in $S^{-1}(0)\subset \BB$  that is  $O(2)$-equivariantly 
 homeomorphic to the cone $H^{-1}(0)\subset \CC^4$, where $O(2)$ acts as in (\ref{act2}). 
 Lemma \ref{Dinv}  then implies that a neighborhood of $s$ in $\chi(S^2,6)$ is homeomorphic
 to $H^{-1}(0)/S^1$ and a neighborhood of $p^*(s)$ in $\chi(F_2)$ is homeomorphic
 to $H^{-1}(0)/O(2)$, so we now focus our attention on $O(2)$-equivariantly identifying the   zero set of the quadratic form $H$.

\medskip

 Recall $H(z)=\Real(z^*Qz)$. 
 The matrix $Q$ is Hermitian, has non-zero determinant, and satisfies $\overline{Q}=-Q$. 
  This implies that $\CC^4$ has an  orthogonal eigenbasis of the form $\{v_1, v_2, \overline{v_1}, \overline{v_2}\}$ satisfying 
 $Q v_i=\lambda_i v_i$ and $Q\overline{v_i}=-\lambda_i\overline{v_i}$ for some $\lambda_i>0,~i=1,2$. 
 After rescaling the basis elements, we can assume  $H(v_i)= 1$ and  $H(\overline{v_i})=-1$.  It follows that for any complex linear combination, \begin{align*}
 H(z_1v_1+z_2v_2+w_1\overline{v_1}+w_2\overline{v_2}) =
 |z_1|^2 +   |z_2|^2
 - |w_1|^2 
 - |w_2|^2      
 =0.
 \end{align*}
The (left) complex linear isomorphism $\psi\colon \HH\times \HH\to \CC^4$ given by
$$ \psi(z_1+z_2\bbj, w_1+w_2\bbj)=z_1v_1+z_2v_2+w_1\overline{v_1}+w_2\overline{v_2}
$$
restricts to a homeomorphism $\psi\colon {\rm Cone}(S^3\times S^3) \to H^{-1}(0)$ satisfying
$$\psi\left(e^{\theta\bbi}(z_1+z_2\bbj, w_1+w_2\bbj)\right)=e^{\theta\bbi}\psi(z_1+z_2\bbj, w_1+w_2\bbj)$$  
and\begin{eqnarray*} 
\overline{\psi(z_1+z_2\bbj, w_1+w_2\bbj)}&=& 
  \psi(\overline{w_1}+\overline{w_2}\bbj, \overline{z_1}+\overline{z_2}~\bbj)\\
&=& \psi( \bbj(w_1+w_2\bbj)\bar\bbj,\bbj  (z_1+z_2\bbj)\bar\bbj)\\  &=&\psi(R(z_1+z_2\bbj, w_1+w_2\bbj)), \end{eqnarray*}
with $R$ as in the $O(2)$ action  (\ref{O2HH}).
In other words, $\psi:{\rm Cone}(S^3\times S^3)\to H^{-1}(0)$ is an $O(2)$-invariant homeomophism, where $O(2)$ acts on $S^3\times S^3$ via  (\ref{O2HH}).

\begin{lemma}\label{kicka}
 Let $O(2)$ act on $S^3\times S^3$ as in (\ref{O2HH}), and let $(S^3\times_{S^1} S^3, R)$ denote the induced $\ZZ/2$ action on the $S^1$ quotient. Then $(S^3\times_{S^1} S^3, R)$ is equivariantly  diffeomorphic  to $(UTS^3, \tau)$, where $\tau$ denotes the differential of reflection of $S^3$ through the hyperplane ${\rm Span}\{1,\bbi,\bbk\}$.  \end{lemma}
\begin{proof}

  The unit tangent bundle of $S^3$ is expressed in quaternionic notation as $$UTS^3=
  \left\{(C,D)\in S^3\times S^3\mid  \Real\left(\overline{C} D\right)=0\right\}.$$  
  The differential of the reflection  of $S^3$  through the hyperplane ${\rm Span}\{1,\bbi,\bbk\}$ is given by $(C,D)\mapsto \left(\bbj \overline{C} ~\overline{\bbj}, \bbj \overline{D} ~\overline{\bbj}\right)$. 
  
  Consider the smooth, $S^1$-invariant map $$ M\colon S^3 \times S^3\to  UTS^3 \subset S^3\times S^3, ~ M(A,B)=\left(\overline{A} B, \overline{A} \bbi B\right). $$  Since  
$$
M( R\cdot (A,B))=\tau\circ M(A,B),
$$ 
 $M$ descends to a involution-preserving
 map  $(S^3 \times_{S^1} S^3, R)\to  (UTS^3,\tau)$. We now show that  this map is a diffeomorphism, by showing that  
 $M\colon S^3\times S^3\to UTS^3$ is a principal $S^1$ bundle, completing the proof of the lemma. 
 
Consider the diffeomorphisms $L:S^3\times S^3\to S^3\times S^3$ defined by $L(A,B)=(A\overline B,B)$ and   $P:UTS^3\to S^3\times S^2$ defined by 
 $P(C,D)=(C,\overline D C)$.  Let $\eta:S^3\to S^2$ denote the Hopf principal $S^1$ fibration,  given by $\eta(B)=\overline B\bbi B$. 
 The diagram
 \[
 \begin{tikzcd}
 S^3\times S^3\arrow[r, "M"]\arrow[d, "L","\cong"']&UTS^3\arrow[d, "P","\cong"']\\
 S^3\times S^3\arrow[r, "1\times \eta"]&S^3\times S^2
 \end{tikzcd}
 \]
commutes. The fibers of $(1\times \eta)\circ L$ are the $S^1$ orbits of (\ref{O2HH}), and therefore $M$ is a principal $S^1$ bundle. \end{proof}

Corollary \ref{ascor}, Theorem \ref{essfive},  and Lemma \ref{kicka} complete the proof of Theorem \ref{thm2.2}.\end{proof}

\section{Decomposing a genus two surface along a separating curve}\label{basics2}

 This section considers a decomposition of a genus two surface into punctured tori, and it establishes a series of local differential-topological properties of various restriction maps between  character varieties associated to this decomposition; these properties are  used below to prove Theorem \ref{thmA}.

\subsection{The character of the commutator} The commutator $[r_-, s_-]$ is represented by an embedded separating curve $C$ (see Figure \ref{fig0fig}) which decomposes
$F_2$ into two punctured tori labelled $F_\pm$:
  \begin{equation}\label{decomp2}
 F_2=F_-\cup_C F_+ .
 \end{equation}
 The  character of the commutator $[r_-,s_-]$ is, by definition,  the  map
 \begin{equation}\label{kappa1}
\kappa\colon \chi(F_2)\to [-1,1], ~ \kappa(\rho)=\Real(\rho([r_-,s_-])).
\end{equation}
 The restriction of $\kappa$ to $\chi(C)$ is a homeomorphism to $[-1,1]$, and the restriction to 
 $\chi^\ab(C)$  a diffeomorphism to the open interval $(-1,1)$ (Section \ref{listofch}).

\subsubsection{Notation} The character $\kappa$ defines a function from each of the  character varieties  $\chi(F_2), \chi(F_\pm)$, and $\chi(C)$ to the interval $[-1,1]$.  We denote the pullback of the two maps $\kappa_\pm \colon  \chi (F_\pm) \to \chi(C)$ by $\chi(F_-)\times_\kappa\chi(F_+)$.
 For any subset $J\subset [-1,1]$,  we use the subscript ``$J$'' to denote
the preimage of $J$ under the appropriate $\kappa$ or $k$. For example, 
$$\chi(F_2)_J=\kappa^{-1}(J)=\chi(F_2)\cap \{\Real( \rho([r_-,s_-]))\in J\},$$
and $$\chi(F_-)\times_\kappa\chi(F_+)_J=\{(\rho_-,\rho_+)\in \chi(F_-)\times \chi(F_+)\mid  \kappa(\rho_-)=\kappa(\rho_+)\in J\}$$
 When $J=\{s\}$ is a singleton,  we write $\chi(F_2)_s$ rather than $\chi(F_2)_{\{s\}}$, and similarly for  $\chi(F_\pm)$ and $\chi(C)$.

\subsection{The character variety pullback diagram  associated to the decomposition of $F_2$}

 Applying the contravariant $SU(2)$ character variety functor to the diagram  \begin{equation*}\label{SVK1}
 \begin{tikzcd}
F_2&F_+ \arrow[l,hook]\\
  F_- \arrow[u,hook]&C\arrow[u,hook]\arrow[l,hook]
\end{tikzcd} 
\end{equation*}
produces the following   commutative diagram, for any $J\subset [-1,1]$.
\begin{equation}\label{SVK3} \begin{tikzcd}
\chi(F_2)_J\arrow[rd,swap,"\phi"]\arrow[bend right=7,swap, "\phi_-"]{rdd}\arrow[bend left=2,"\phi_+"]{rrd}&&     & \\
&\chi(F_-)\times_{\kappa}\chi(F_-)_J\arrow[r,swap,"\text {Proj}_+"]\arrow[d,"\text{Proj}_-"]&\chi(F_+)_J\arrow[d, "\kappa_+"]  & \\
&\chi( F_-)_J\arrow[r,swap,"\kappa_-"]&\chi(C)_J\arrow[r,"\cong", "\kappa" ']  &J
\end{tikzcd} 
\end{equation}
The fibers of the canonical map $\phi$ to the pullback:
\begin{equation}\label{phi2}
\phi=\phi_-\times \phi_+\colon \chi(F_2)\to \chi(F_-)\times_{\kappa}\chi(F_+),
\end{equation} 
 known as {\em gluing} or {\em bending  parameters} for the decomposition, are identified in Proposition \ref{fibers2}.

We list some   properties of the  maps in (\ref{SVK3})  in the following lemma.

 \begin{lemma}\label{WeilorWeyl}  \hfill
 
 \begin{enumerate}
\item Taking $J=[-1,1]$, every map in (\ref{SVK3}) is surjective.

 \item Taking $J=[-1,1)$,   $\chi(F_2)_J$  is a smooth open 6-dimensional submanifold of $\chi^*(F_2)$. Also $\chi(F_\pm)_J=\chi^*(F_\pm)$; these are smooth open 3-dimensional  manifolds.
The restrictions of $\phi_+$ and $\phi_-$ to these submanifolds are submersions.
 
 \item Taking $J=(-1,1)$,  every space in (\ref{SVK3}) is a smooth manifold  and  every map in the diagram  is a smooth submersion.  Moreover, $\dim \chi(F_-)\times_\kappa\chi(F_+)_{(-1,1)}=5.$

\end{enumerate}
 
\end{lemma}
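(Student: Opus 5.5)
I will prove the three parts in order, working with explicit representatives and the standard presentations: $\pi_1(F_\pm)$ free on $r_\pm,s_\pm$, $C\mapsto[r_-,s_-]$ (respectively $[s_+,r_+]$), and $\pi_1(F_2)$ the amalgam over $\langle C\rangle$.

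\emph{Part (1).} Surjectivity of $\kappa$ and of $\chi(C)\to J$ is immediate from Section \ref{listofch}. For $\kappa_\pm$ I will use the classical fact that the commutator map $SU(2)\times SU(2)\to SU(2)$, $(A,B)\mapsto ABA^{-1}B^{-1}$, is surjective. For instance, taking $A=\bbi$ and $B=e^{\theta\bbj}$ gives $\bbi e^{\theta\bbj}\bar\bbi e^{-\theta\bbj}=e^{-2\theta\bbj}$, which realizes every trace. Surjectivity of $\phi$ is the main content. Given $(\rho_-,\rho_+)$ with $\kappa(\rho_-)=\kappa(\rho_+)$, the elements $\rho_-([r_-,s_-])$ and $\rho_+([s_+,r_+])$ have equal real part. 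They are therefore conjugate, so after conjugating $\rho_+$ they agree and define a representation of $\pi_1(F_2)$. Then $\phi_\pm$ are surjective as compositions: $\operatorname{Proj}_\pm$ are surjective because the fibered product surjects onto each factor, using surjectivity of $\kappa_\mp$.

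\emph{Part (2).} If $\kappa(\rho)<1$, then $\rho([r_-,s_-])\neq 1$, so $\rho|_{F_-}$ is non-abelian, hence $\rho$ is non-abelian. Proposition \ref{dimension} then places $\chi(F_2)_{[-1,1)}$ in the smooth locus, and it is open since $\kappa$ is continuous. Likewise $\chi(F_\pm)_{[-1,1)}=\chi^*(F_\pm)$, because a pair of elements of $SU(2)$ commutes if and only if its commutator has real part $1$. This is the principal orbit set, a smooth 3-manifold by Section \ref{listofch}. For the submersion claim I will use Weil's identification and show that the restriction $H^1(F_2;su(2)_{ad\rho})\to H^1(F_-;su(2)_{ad\rho})$ is onto. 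Using the Mayer--Vietoris sequence for $F_2=F_-\cup_C F_+$, the cokernel of $H^1(F_2)\to H^1(F_-)\oplus H^1(F_+)$ injects into $H^1(C)$. I therefore need to lift any class on $F_-$ paired with a suitable class on $F_+$ having matching restriction to $C$.

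This step is the main obstacle. When $\rho|_{F_+}$ is non-abelian, $H^1(F_+)\to H^1(C)$ is onto, because $H^2(F_+,C)\cong H^0(F_+)^*$ vanishes. In that case any class on $F_-$ extends. It remains to handle $\rho|_{F_+}$ abelian with $\kappa=1$. This is impossible here, since $\kappa<1$ forces both restrictions to be non-abelian, as $C$ is also $[s_+,r_+]$. A concrete alternative, which I would use as a check, is to construct explicit curves: vary $\rho|_{F_-}$ and fix $\rho|_{F_+}$ up to a conjugating path matching the commutators, which is possible because $\kappa_-$ preserves conjugacy class.

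\emph{Part (3).} On $(-1,1)$ every space is one of those in part (2), or $\chi^{anc}(C)$, which is smooth. I will show $\kappa_\pm$ are submersions on $\chi^*(F_\pm)_{(-1,1)}$ by computing $d\kappa\neq0$ directly. It suffices to exhibit a path, for example the $\theta$-family above, along which the trace strictly varies away from $\theta$ with $\cos2\theta=\pm1$. More conceptually, the trace of the commutator has critical points only where the commutator is $\pm1$. The fibered product is then a transverse pullback, hence a smooth manifold of dimension $3+3-1=5$, and $\operatorname{Proj}_\pm$ are submersions as base changes of submersions. Finally, $\phi$ is a submersion because $\phi_\pm$ are, together with the Mayer--Vietoris exactness above: the image of $d\phi$ is exactly the fibered product of tangent spaces, which is the tangent space of the pullback.
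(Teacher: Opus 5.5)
\textbf{Overall.} Parts (1) and (2) are correct and follow the paper's route. For surjectivity you glue after conjugating $\rho_+$ so the two commutators agree. For the submersion claim you use Weil's theorem together with the vanishing $H^2(F_\pm,C;su(2)_{ad\rho})\cong H^0(F_\pm;su(2)_{ad\rho})^*=0$. You package this through Mayer--Vietoris, while the paper uses the sequence of the pair $(F_2,F_\mp)$ plus excision, but the input is the same.

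Your ``concrete alternative'' check does not work in general. Varying $\rho|_{F_-}$ transversally to a level set of $\kappa$ changes the conjugacy class of $\rho(C)$, and no conjugating path applied to a fixed $\rho|_{F_+}$ can match it. Only the Mayer--Vietoris argument, which also moves $\rho|_{F_+}$, handles all tangent directions.

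\textbf{The gap: $\kappa_\pm$ submersions in Part (3).} You justify that $\kappa_\pm\colon \chi(F_\pm)_{(-1,1)}\to\chi(C)_{(-1,1)}$ are submersions by exhibiting the single family $(\bbi,e^{\theta\bbj})$ along which the trace of the commutator varies. That only shows $d\kappa_\pm\neq 0$ at the points of that one curve, not at every point of the $3$-manifold $\chi^*(F_\pm)_{(-1,1)}$. Your fallback, that $\Real$ of the commutator has no critical points unless the commutator is $\pm1$, is true but is asserted without proof. This step is load-bearing: the transversality of the pullback, its dimension $5$, and the submersion property of ${\rm Proj}_\pm$ as base changes all rest on it.

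\textbf{How to fix it.} You already proved what is needed in Part (2): $H^1(F_\pm;su(2)_{ad\rho})\to H^1(C;su(2)_{ad\rho})$ is onto because $H^2(F_\pm,C;su(2)_{ad\rho})=0$. For $|\kappa|<1$ the point $\rho(C)$ lies in the smooth locus $\chi^{\rm anc}(C)$, with $H^1(C;su(2)_{ad\rho})\cong\RR$. So by Weil's theorem this surjectivity is exactly surjectivity of $d\kappa_\pm$, which is how the paper argues. With that replacement, the rest of your Part (3) is correct and matches the paper. That rest is the transverse fibered product, ${\rm Proj}_\pm$ as base changes, and Mayer--Vietoris exactness identifying the image of $d\phi$ with the tangent space of the pullback.
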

 \begin{proof}

    Any $\rho_-\colon \pi_1(F_-)\to SU(2)$ extends to $\rho\colon \pi_1(F_2)\to SU(2)$  by setting $\rho(r_+)=\rho_-(s_-)$ and $\rho(s_+)=\rho_-(R_-)$. Hence $\phi_-$ is surjective, and similarly $\phi_+$, is surjective.
 The Seifert-Van Kampen theorem  implies that $\phi$ is surjective, and commutativity implies   Proj$_\pm$ are surjective. The assignment $$r_-\mapsto \bbi,~ s_-\mapsto t\bbi +\sqrt{1-t^2}\bbj,~ t\in[0,1]$$
satisfies $\Real([r_-,s_-])=2t^2-1$, showing that $\kappa_-$ is surjective.
 The same argument shows  $\kappa_+$ is surjective, completing the proof of the first assertion.
  
 \medskip
 
 The remaining assertions depend on the following  easy calculations.
 Suppose  that $\rho\in \chi(F_2)$ satisfies $\kappa(\rho)\ne 1$. Then $\rho$ and its restrictions $\rho_\pm$ to $\pi_1(F_\pm)$ are non-abelian. Hence $H_0(F_2;su(2)_{ad\rho})=0$ and 
 $H_0(F_\pm; su(2)_{ad\rho})=0$.  Applying Poincar\'e duality we conclude that $H^2(F_2; su(2)_{ad\rho})=0$  and $H^2(F_\pm, C; su(2)_{ad\rho})=0$.
Excision shows $ H^2(F_2, F_\mp;su(2)_{ad\rho})\cong H^2(F_\pm, C; su(2)_{ad\rho})=0.$ If in addition $\kappa(\rho)\ne \pm1$, then 
 $H^1(C;su(2)_{ad\rho})\cong   \RR $.
 \medskip

We prove the second assertion. First $\chi(F_2)_{[-1,1)}$ is an open subset of $\chi(F_2)$ contained in the smooth locus $\chi^*(F_2)$, and hence by Proposition \ref{dimension} a smooth 6-manifold.
Similarly,  $\chi(F_\pm)_{[-1,1)}$ is an open subset of $\chi(F_\pm)$ contained in the smooth locus $\chi^*(F_\pm)$, an open 3-manifold,  as explained in Section \ref{listofch}. In fact 
$\chi(F_\pm)_{[-1,1)}=\chi^*(F_\pm)$, since $h\colon \langle r,s\rangle\to SU(2)$ is abelian if and only $\Real(h([r,s]))=1$.

Using Weil's theorem, surjectivity of  $d\phi_\pm$ at $\rho\in \chi(F_2)_{[-1,1)}$ is equivalent to  $H^1(F_2;su(2)_{ad \rho})\to H^1(F_\pm;su(2)_{ad \rho})$ being surjective.    
The exact sequence of the pair $(F_2,F_\pm)$
\begin{align*}
\cdots\to H^1(F_2;su(2)_{ad \rho})\xrightarrow{(d\phi_\pm)_\rho}H^1(F_\pm ;su(2)_{ad \rho})\to H^2(F_2,F_\pm;su(2)_{ad \rho})\to\cdots , \end{align*}
 and the fact noted above that $H^2(F_2,F_\pm;su(2)_{ad\rho})=0$ if $\kappa(\rho)\neq 1$  imply
that $d\phi_\pm$ are submersions over $J=[-1,1)$, completing the proof of the second assertion.  

\medskip
 From the second assertion we know that $\phi_{\pm}\colon \chi(F_2)_{(-1,1)}\to \chi(F_\pm)_{(-1,1)}$ are submersions of smooth manifolds.  Since $\kappa\colon \chi(C)_{(-1,1)} \to (-1,1)$ is a diffeomorphism, it remains only to  show that
$\kappa_\pm$ and $\phi$ are submersions over $(-1,1)$ and that $\chi(F_-)\times_\kappa\chi(F_+)_{(-1,1)}$ is a smooth 5-manifold.

Using Weil's theorem, surjectivity of  $d\kappa_\pm$ at $\rho_\pm\in \chi(F_\pm)_{(-1,1)}$ is equivalent to  $H^1(F_\pm;su(2)_{ad \rho})\to H^1(C;su(2)_{ad \rho})$ being surjective.  
This follows  from the exact sequence of the pair $(F_\pm,C)$
 and the fact that $H^2( F_\pm, C;su(2)_{ad\rho} )=0$.

\medskip

The calculations $H^2(F_2;su(2)_{ad\rho})=0$ and $H^1(C;su(2)_{ad\rho})\cong  \RR$ when $|\kappa|<1$, combined with 
the Mayer-Vietoris sequence,
$$\cdots \to H^1(F_2;su(2)_{ad\rho})\xrightarrow{d\phi_-\oplus d\phi_+} H^1(F_-;su(2)_{ad\rho})\oplus H^1(F_+;su(2)_{ad\rho})\xrightarrow{\alpha} H^1(C;su(2)_{ad\rho})\to 0.
$$
shows that $\chi(F_-)\times_\kappa\chi(F_+)_{(-1,1)}$ is  a smooth 5-manifold, cut out transversely  as the zero set of the map 
 $$ \chi^*(F_-)\times\chi^*(F_+)_{(-1,1)} \to \RR, (\rho_-,\rho_+)\mapsto \kappa(\rho_+)-\kappa(\rho_-).$$
  This Mayer-Vietoris sequence also
 identifies the tangent space of $\chi(F_-)\times_{\kappa}\chi(F_-)_{(-1,1)}$ with the image of the differential of $\phi$ at $\rho$, proving that $\phi$ is a submersion over $(-1,1)$. 
 \end{proof}

Looking ahead, Proposition \ref{prop1} below, due to Goldman \cite{GoldmanErgodic}, shows that $\chi(F_\pm)$ is homeomorphic to a closed topological 3-ball in $\RR^3$, with smooth interior  $\chi^*(F_\pm)$.   Lemma \ref{JAB} shows that $\chi(F_-)\times_\kappa\chi(F_+)$
is homeomorphic to a cone on $S^2\times S^2$, with
 $\chi(F_-)\times_\kappa\chi(F_+)_{(-1,1)}$   diffeomorphic to $S^2\times S^2\times(-1,1)$.

\section{ $\chi(F_2)$ is homeomorphic to $\CC P^3$}\label{princ}
 We now turn to the identification of the compact   6-manifold $\chi(F_2)$ with $\CC P^3$.  This is done by comparing the character $\kappa$ to a certain Morse-Bott function on $\CC P^3$.

\subsection{A  decomposition of $\CC P^3$}
Consider the function
$$\kappa_{\CC P^3}\colon \CC P^3\to [-1,1]$$
\begin{equation}\label{Coltrane} \kappa_{\CC P^3}([x:y:z:w])=1-2\frac{|x^2+y^2+z^2+w^2|^2}{\left(|x|^2+|y|^2+|z|^2+|w|^2\right)^2}.
\end{equation}
Then $\kappa_{\CC P^3}$ has minimum $-1$ on $\RR P^3$,  and maximum $1$ on the smooth quadratic complex surface $$Q=\left\{x^2+y^2+z^2+w^2=0\right\}.$$

It can be verified by hand  that $\kappa_{\CC P^3}$ is Morse-Bott with only the maximum and minimum as critical values. Hence $\kappa_{\CC P^3}$ separates $\CC P^3$ into tubular neighborhoods
of $\RR P^3$ and $Q$.
 Alternatively, 
Uchida \cite[Ex. 3.2]{Uchida} gives the following  construction.  Let $SO(4)$ act on $\CC P^3$ via $SO(4)\subset GL(4,\CC)$. 
Then, \cite[Lemma 1.2.1]{Uchida} asserts
that $\CC P^3/SO(4)$ is homeomorphic to the unit interval $ [-1,1]$, the endpoints correspond to the two non-principal orbits $\RR P^3$ and $Q$, and  the preimages of $[-1,0]$ and $ [0,1]$ are (closed) tubular neighborhoods of $\RR P^3$ and $Q$.   It is easily checked that the orbits of this $SO(4)$ action coincide with the level sets of $\kappa_{\CC P^3}$.

\medskip

As a complex submanifold of $\CC P^3$,  $Q$ is oriented, as is its normal bundle. 
There exists an orientation-preserving  diffeomorphism  (the Segre embedding) $S^2\times S^2\to Q\subset \CC P^3$ such that the Euler class $e$ of the normal bundle of $Q$ satisfies $e( S^2\times \{p\} )=2=e( \{p\}\times S^2  )$. The tubular neighborhood theorem provides   an  orientation-preserving embedding  of   the oriented Euler class $(2,2)$ $D^2$ bundle   $E\to S^2\times S^2$  into $\CC P^3$,
with image the closed tubular neighborhood $\kappa_{\CC P^3}^{-1}([0,1])$ of $Q$.
\[\begin{tikzcd}
E\arrow[d]\arrow[r, "\cong"]& \kappa_{\CC P^3}^{-1}([0,1])\arrow[d]\arrow[dr, hook]&\\
S^2\times S^2\arrow[r,"\cong"]&Q\arrow[r,hook]&\CC P^3
\end{tikzcd}
\]
Going forward we identify  $E$ with $\kappa_{\CC P^3}^{-1}([0,1])\subset \CC P^3$, and $S^2\times S^2$ with $Q$.  

\medskip

Let $s_1,s_2\in H^2(S^2\times S^2)$ denote the classes satisfying 
$$s_1(S^2\times \{p\})=1,~s_1( \{p\}\times S^2)=0,~s_2(S^2\times \{p\})=0,~s_2( \{p\}\times S^2)=1.$$
Then $\{s_1,s_2\}$ generates $H^2(S^2\times S^2) $, $s_1s_2$ generates $H^4(S^2\times S^2)$, $s_1s_2\cap [S^2\times S^2]=1$, and  $e=2s_1+2s_2$.  The bundle projection induces an isomorphism $H^*(S^2\times S^2)\to H^2(E)$; we use the notation $s_1,s_2, s_1s_2$ also for the image of these classes by this isomorphism.

\medskip

Thus $\kappa_{\CC P^3}$ determines  a decomposition of  $\CC P^3$:
\begin{equation}\label{CP3eas}\CC P^3=\{\kappa_{\CC P^3}\leq 0\}\cup_{\{\kappa_{\CC P^3}= 0\}}\{\kappa_{\CC P^3}\ge 0\}=
\left(\RR P^3\times D^3\right) \cup_{\RR P^3\times S^2 }E.\end{equation}

\begin{proposition}\label{P=NPforreal} Let  $d\colon \RR P^3\times S^2\to \partial E$ be any diffeomorphism and
let $X(d)$ denote the smooth compact 6-manifold obtained by gluing $\RR P^3\times D^3$ and $E$ 
 using $d$. Then $X(d)$ is diffeomorphic 
 to $\CC P^3$. 

\end{proposition}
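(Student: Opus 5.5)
The plan is to apply Wall's classification of closed, smooth, simply connected, spin 6-manifolds with torsion-free homology. Such a manifold is determined up to diffeomorphism by three invariants: $H^2(X;\ZZ)$, the cubic form $x\mapsto x^3$ on $H^2$, and the first Pontryagin class $p_1\in H^4$ (Wall also requires $b_3=0$ here). I would compute these invariants for $X(d)$ and check that they agree with those of $\CC P^3$: $H^2=\ZZ\langle x\rangle$, $x^3=1$, and $p_1=4x^2$. This comparison is made directly for $X(d)$; the decomposition (\ref{CP3eas}) of $\CC P^3$, which is one instance $X(d_0)$, serves only as a consistency check.

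The topology comes first. Van Kampen applies to the decomposition $X=(\RR P^3\times D^3)\cup_{\RR P^3\times S^2}E$. The inclusion $\RR P^3\times S^2\to\RR P^3\times D^3$ is a $\pi_1$-isomorphism, and $E$ is simply connected. Hence $\pi_1(X)=1$ for every $d$.

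Next comes Mayer--Vietoris with $\ZZ$ coefficients. We have $H^*(\RR P^3\times D^3)=H^*(\RR P^3)=\ZZ,0,\ZZ/2,\ZZ$, and $H^*(E)=H^*(S^2\times S^2)$. The boundary $\partial E$ is the circle bundle of Euler class $(2,2)$. Its cohomology follows from the Gysin sequence: $H^2(\partial E)\cong\ZZ^2/(2,2)\ZZ\cong\ZZ\oplus\ZZ/2$, and similarly in the other degrees. Any diffeomorphism $d$ identifies this with $H^*(\RR P^3\times S^2)$, and the sequence then gives the cohomology of $X$ independently of $d$. I expect $H^2(X)\cong\ZZ$ and $H^3(X)=0$, with the restriction $H^2(X)\to H^2(E)=\ZZ s_1\oplus\ZZ s_2$ injective with image in the kernel of restriction to the fiber direction. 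The key point for the cubic form is a relative argument. $H^4(X)\cong H^4(X,\RR P^3\times D^3)\cong H^4(E,\partial E)$ by excision, once one checks that $\RR P^3$ contributes no $H^4$ or $H^5$. By the Thom isomorphism, $H^4(E,\partial E)\cong H^2(S^2\times S^2)$, and cup products can then be computed in $E$ using $e=2s_1+2s_2$. Carrying this out should show that a generator $x$ restricts to $s_1+s_2$ (this is what the two factors of $2$ in $e$ are for), and that $x^3=\langle (s_1+s_2)^2\cdot\tfrac{1}{2}e\rangle$-type evaluations give $\pm1$. Orientation can be fixed so that $x^3=1$.

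For the spin condition, note that $w_2(X)$ restricts to $w_2$ of $E$ and of $\RR P^3\times D^3$. The second is zero. For the first, $w_2(E)=w_2(S^2\times S^2)+e \bmod 2=0$. Since $H^2(X;\ZZ/2)\to H^2(E;\ZZ/2)$ should be injective by the same Mayer--Vietoris argument mod $2$, $X$ is spin. For $p_1$, the same restriction argument applies: $TE|_{S^2\times S^2}=T(S^2\times S^2)\oplus L$ with $c_1(L)=e$, so $p_1(E)=e^2=8s_1s_2$. Matching this through the injective restriction $H^4(X)\to H^4(E)$ identifies $p_1(X)=4x^2$, since $x^2$ restricts to $2s_1s_2$. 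Wall's theorem then gives $X(d)\cong\CC P^3$.

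The main obstacle is the careful integral Mayer--Vietoris and cup-product bookkeeping. The $2$-torsion from $\RR P^3$ and from $\partial E$ must cancel correctly, and the restriction $H^4(X)\to H^4(E)$ must be shown injective, or at least to detect $x^2$ and $p_1$, for arbitrary $d$. I would handle this by comparing with the known case $\CC P^3=X(d_0)$: the computation is formally identical because it only uses the groups $H^*(\partial E)$ and the restriction maps from $E$, which do not depend on $d$.
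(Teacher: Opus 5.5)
Your outline matches the paper's proof. Both use Wall's Theorem 5, get $\pi_1=1$ from van Kampen, and detect $w_2$, $p_1$ and the cubic form by restricting to $E$.

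Your execution differs in small ways. You identify $x|_E=\pm(s_1+s_2)$ directly, and you compute $w_2(E)$ and $p_1(E)=e^2=8s_1s_2$ intrinsically. The paper instead shows only that the image of $H^2(X(d))\to H^2(E)$ is independent of $d$ (Lemma~\ref{abcconjecture}), and then imports $n_{d_0}=1$ and $p_1(E)$ from $\CC P^3$. Your version therefore never needs the model $X(d_0)=\CC P^3$.

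Your stated reason for $d$-independence is not quite right, though. The maps out of $H^*(\RR P^3\times D^3)$ do depend on $d$; only the maps out of $H^*(E)$ do not. What is $d$-independent is the images in $H^*(\partial E)$ of the maps out of $H^*(\RR P^3\times D^3)$, because they are characteristic: the torsion subgroup in degree $2$ and all of $H^3$. Hence the image of $H^2(X)$ in $H^2(E)$ is $r^{-1}(\mathrm{Tors}\,H^2(\partial E))=\ZZ(s_1+s_2)$ for every $d$, where $r\colon H^2(E)\to H^2(\partial E)$ is restriction.

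One step is false as written: $H^4(X)$ is not isomorphic to $H^4(X,\RR P^3\times D^3)$. In the long exact sequence of the pair $(X,A)$ with $A=\RR P^3\times D^3$, the group that matters is $H^3(A)\cong\ZZ$, not $H^4$ or $H^5$. Since $H^3(X)=0$, it injects into $H^4(X,A)\cong H^4(E,\partial E)\cong\ZZ^2$, onto the classes killed by $\cup e$. So $H^4(X)\cong\ZZ$ is only a quotient of $H^4(E,\partial E)$.

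You do not need this step; the pieces you already have suffice.
\begin{itemize}
\item By Mayer--Vietoris, $H^3(A)\to H^3(\partial E)$ is onto and $H^4(A)=0$. Hence $H^4(X)\to H^4(E)$ is injective with image $\ker\bigl(H^4(E)\to H^4(\partial E)\cong\ZZ/2\bigr)=2\ZZ s_1s_2$, which has index two.
\item Since $x^2$ restricts to $2s_1s_2$, it generates $H^4(X)$.
\item Poincar\'e duality then gives $x^3=\pm1$, and injectivity gives $p_1=4x^2$.
\end{itemize}

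If you prefer the relative computation, lift $x^2$ instead of claiming an isomorphism. Write the lift, via the Thom isomorphism, as $\alpha=as_1+bs_2\in H^2(S^2\times S^2)\cong H^4(E,\partial E)$ with $\alpha\cup e=2s_1s_2$, i.e.\ $a+b=1$. Then $x^3=\langle (s_1+s_2)\alpha,[S^2\times S^2]\rangle=a+b=1$ up to sign, which is the correct version of your ``$\tfrac12 e$''-type evaluation.
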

\noindent{\em Remark.} It is unknown to the authors whether every diffeomorphism of $\RR P^3\times S^2$ extends over  one of $\RR P^3\times D^3$ or $E$. If true, then Proposition \ref{P=NPforreal}  follows as a trivial consequence. 

\begin{proof}  
  As we have shown above, for at least one choice of $d$, call it $d_0$, $X(d_0)=\CC P^3$.    Recall that as a ring, $H^{*}(\CC P^3)= \ZZ[h]/\left\langle h^4\right\rangle, ~h\in H^2(\CC P^3),$ and $h^3\in H^6(\CC P^3)$ is the orientation class.
The calculations $w_2(\CC P^3)=0$ (\cite[page 133]{milnor-stasheff}) and $p_1(\CC P^3)=4h^2$ (\cite[page 178]{milnor-stasheff}) are standard. 

\medskip

 Fix a diffeomorphism $d\colon \RR P^3\times S^2\to \partial E$.  Let $i_d\colon E\hookrightarrow X(d)=(\RR P^3\times S^2)\cup_d E$ denote the inclusion (we use $i_d$ also to denote the induced map on cohomology).
  Orient $X(d)$ so that $i_d$ is orientation preserving.  
  The Seifert-Van-Kampen theorem implies that $X(d)$ is simply connected for any $d$.
 A straightforward calculation,  using the long exact sequence of the pair  $(X(d), E)$, excision, Poincar\'e duality and the universal coefficient theorem,  proves that
 \begin{enumerate}
\item $H^i(X(d))=\ZZ $ for $i=0,2,4,6$, and equals zero otherwise,
\item $
 0\to H^4(X(d))\cong\ZZ   \xrightarrow{i_d}  H^4(E)\cong\ZZ \to H^5(X(d),E)\cong\ZZ/2\to 0
 $ is exact, and 
 \item $
 0\to H^2(X(d))\cong\ZZ \xrightarrow{i_d} H^2(E)\cong\ZZ\oplus\ZZ \to H^3(X(d),E)\cong\ZZ\to 0
 $ is exact.
\end{enumerate}

Exact sequence (2) shows  that the image of $H^4(X(d))  \xrightarrow{i_d}  H^4(E)\cong H^4(S^2\times S^2)\cong \ZZ\langle s_1s_2\rangle$
is the unique index two subgroup. Hence, for each $d$, there exists a unique  generator $k_d\in H^4(X(d))$ satisfying
$$i_d(k_d)=2s_1s_2.$$
Poincar\'e duality  implies that there is a unique generator   $h_d\in H^2(X(d))$ satisfying
$h_dk_d\cap [X(d)]=1$. 
Define $n_d\in \ZZ $ by $$h_d^2=n_d k_d,$$
In the case of $\CC P^3$, $h_{d_0}=h$,  $k_{d_0}=h_{d_0}^2$, and $n_{d_0}=1$.

\medskip

According to \cite[Theorem 5]{Wall-classificationV}, $X(d)$ is diffeomorphic to $\CC P^3$ if  
$$w_2(X(d))=0,~
 p_1(X(d))=4 k_d, \text{ and }
n_d=1.$$
A calculation with the long exact sequence of the pair $(X(d), \RR P^3\times D^3)$ with $\ZZ/2$ coefficients establishes that   the restriction $H^2(X(d);\ZZ/2)\to  H^2(\RR P^3\times D^3;\ZZ/2)=\ZZ/2$ is an isomorphism.
Since $\RR P^3\times D^3$ has a trivial tangent bundle, naturality implies that $w_2(X(d))$ maps to zero in $H^2(\RR P^3\times D^3;\ZZ/2)$  and therefore $w_2(X(d))=0$. This verifies the first condition.
Since $p_1(\CC P^3)=4h_{d_0}^2$,   naturality shows  that the manifold $E$ has $p_1(E)=8s_1s_2$. This  implies that $p_1(X(d))=4k_d$, verifying the second condition. 
 It remains, then, to show that $n_d=1$.

\begin{lemma}\label{abcconjecture} The image of $i_d\colon H^2(X(d))  \to H^2(E)$ is independent of $d$.
\end{lemma}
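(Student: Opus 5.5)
The plan is to characterize the image of $i_d$ by a Mayer--Vietoris argument, in a form that involves only $E$, its boundary $\partial E$, and the torsion subgroup of $H^2(\partial E)$. None of these depends on the gluing map $d$.

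First I will write down the Mayer--Vietoris sequence for $X(d)=(\RR P^3\times D^3)\cup_d E$, glued along $\partial E\cong_d \RR P^3\times S^2$:
$$H^2(X(d))\longrightarrow H^2(\RR P^3\times D^3)\oplus H^2(E)\xrightarrow{\ (d^{*-1}\circ r_1)\,-\, r_2\ } H^2(\partial E).$$
Here $r_1$ is restriction to $\RR P^3\times S^2$ and $r_2$ is restriction to $\partial E$. Exactness at the middle term gives the following description. A class $e\in H^2(E)$ lies in the image of $i_d$ if and only if there exists $a\in H^2(\RR P^3\times D^3)$ with $r_2(e)=(d^{-1})^*r_1(a)$.

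Next I will compute the image of $r_1$. We have $H^2(\RR P^3\times D^3)\cong H^2(\RR P^3)\cong \ZZ/2$. By the K\"unneth theorem,
$$H^2(\RR P^3\times S^2)\cong H^2(\RR P^3)\oplus H^0(\RR P^3)\otimes H^2(S^2)\cong \ZZ/2\oplus\ZZ,$$
and $r_1$ is the inclusion of the first summand. Hence the image of $r_1$ is exactly the torsion subgroup $\mathrm{Tors}\,H^2(\RR P^3\times S^2)$.

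The key observation is then that $(d^{-1})^*$ is a group isomorphism, so it carries the torsion subgroup onto the torsion subgroup, whatever $d$ is. Therefore
$$\mathrm{Im}(i_d)=\{\,e\in H^2(E)\mid r_2(e)\in \mathrm{Tors}\,H^2(\partial E)\,\}.$$
The right-hand side is defined purely in terms of $E$ and its boundary, so it is independent of $d$. Since $H^2(\partial E)\cong \ZZ/2\oplus\ZZ$, one could equivalently describe this set as the kernel of $H^2(E)\to H^2(\partial E)/\mathrm{Tors}\cong\ZZ$.

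The only step that needs some care is the identification of $\mathrm{Im}(r_1)$ with the torsion subgroup. This is where the specific topology of $\RR P^3\times D^3$ enters: it matters that $H^2(\RR P^3)$ is all torsion and that the free part of $H^2(\RR P^3\times S^2)$ comes from the $S^2$ factor, which dies in $D^3$. Once that identification is in hand, the argument is formal, because any isomorphism of abelian groups preserves torsion.
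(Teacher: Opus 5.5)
Your proof is correct and is essentially the paper's argument. You use the Mayer--Vietoris sequence where the paper uses the ladder from the exact sequence of $(X(d),E)$ to that of $(\RR P^3\times D^3,\RR P^3\times S^2)$ via excision; both hinge on the image of $H^2(\RR P^3\times D^3)\to H^2(\RR P^3\times S^2)$ being exactly the torsion $\ZZ/2$ and on $d^*$ preserving torsion, giving ${\rm image}(i_d)=r^{-1}({\rm Torsion}(H^2(\partial E)))$, and your formulation states both inclusions explicitly, which the paper's text leaves implicit in the ladder.
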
 
\begin{proof} 
The ladder from the exact sequence (3)  to the cohomology sequence for the pair $ (\RR P^3\times D^3, \RR P^3\times S^2)$:
\[
\begin{tikzcd}
0\arrow[r]&H^2(X(d))\arrow[r,"i_d"]\arrow[d]& H^2(E)\arrow[r] \arrow[d,"d^*\circ r"]&H^3(X(d),E)\arrow[r]\arrow[d,"\cong"]&0\\
0\arrow[r]&H^2(\RR P^3\times D^3)\arrow[r] & H^2( \RR P^3\times S^2)\arrow[r] &H^3(\RR P^3\times D^3, \RR P^3\times S^2)\arrow[r] &0
\end{tikzcd}
\]
commutes, where $r\colon H^2(E)\to H^2(\partial E)$ is the restriction, and $d^*\colon H^2(\partial E)\to H^2( \RR P^3\times S^2)$ is induced by the diffeomorphism $d$.   The bottom exact sequence is computed via the Kunneth theorem to be $0\to \ZZ/2\to \ZZ/2\oplus \ZZ\to \ZZ\to 0$ and therefore   $d^*\circ r\circ i_d(h_d)$ lies in  the torsion subgroup $T\cong\ZZ/2\subset H^2( \RR P^3\times S^2)$. Since the isomorphism $d^*$ preserves torsion subgroups, 
$${\rm image}(i_d) = r ^{-1}({\rm Torsion}(H^2(\partial E))),$$ proving the lemma.
 \end{proof}

 We can now complete the proof of Proposition \ref{P=NPforreal}.
 Write $i_{d_0}(h_{d_0})=as_1+bs_2$.  Lemma \ref{abcconjecture} implies that
  $$i_{d}(h_d)^2= (\pm i_{d_0}(h_{d_0}))^2= (as_1+bs_2)^2=2abs_1s_2.$$ 
  On the other hand, naturality of cup products gives
  $$i_{d}(h_d)^2=i_{d}(h_d^2)=i_{d}(n_d k_d)=2n_d s_1s_2,$$
and therefore $ab=n_d$.  Since $n_{d_0}=1$, $1=n_d$.  \end{proof}

\subsection{A decomposition of $\chi(F_2)$ induced by $\kappa$.}
 Lemma \ref{WeilorWeyl} implies  that     $\kappa\colon  \chi(F_2)_{(-1,1)}\to (-1,1)$ is a smooth submersion.
Therefore,
the character  $\kappa$ determines the decomposition of the  manifold $\chi(F_2)$ into two compact  topological manifolds along their common boundary: 
\begin{equation}\label{SVK1prime}
\chi(F_2)=\chi(F_2)_{[-1,0]}\cup_{\chi(F_2)_0} \chi(F_2)_{[0,1]}.
\end{equation}
 The  manifold $\chi(F_2)_{[-1,0]}$ is a smooth, codimension zero  submanifold of   the smooth manifold $\chi^*(F_2)$, with smooth boundary $\chi(F_2)_0$. 
 
 We now state two theorems about $\chi(F_2)_{[-1,0]} $ and $\chi(F_2)_{[0,1]}$ and show how these two theorems imply Theorem B (which we state as a corollary below).   
 
\begin{theorem}\label{thm7} The space $\chi(F_2)_{-1}$ is diffeomorphic to $\RR P^3$ and the inclusion  $\chi(F_2)_{-1}\subset \chi^*(F_2)$ is a smooth  embedding.  The smooth compact manifold with boundary 
$\chi(F_2)_{[-1,0]}$ is diffeomorphic to $\RR P^3\times D^3$, and is a closed tubular neighborhood of $\chi(F_2)_{-1}$ in $\chi^*(F_2)$. 
\end{theorem}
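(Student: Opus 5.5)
We first identify $\chi(F_2)_{-1}$ explicitly, then show $\kappa$ is Morse--Bott along this minimum with nondegenerate normal Hessian, and finally use the gradient flow of $\kappa$ to identify the sublevel set $\{\kappa\le 0\}$ with a disk bundle.

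\textbf{Identifying $\chi(F_2)_{-1}$.} If $\Real(\rho([r_-,s_-]))=-1$, then $[\rho(r_-),\rho(s_-)]=-1$. A pair $(A,B)\in SU(2)^2$ has $ABA^{-1}B^{-1}=-1$ exactly when $A,B$ are traceless and anticommute. Up to conjugation, then, $\rho(r_-)=\bbi$ and $\rho(s_-)=\bbj$. Since $\pm 1$ and $\bbi,\bbj$ commute with nothing else relevant, the stabilizer of this pair is only $\{\pm1\}$.

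The relation then forces $[\rho(r_+),\rho(s_+)]=-1$, so $(\rho(r_+),\rho(s_+))$ is another anticommuting traceless pair. The anticommuting traceless pairs form a single free $SO(3)=SU(2)/\{\pm1\}$-orbit; concretely, they are the orthonormal pairs in $S^2$, which is $UTS^2\cong SO(3)$. Hence
$$\chi(F_2)_{-1}\cong \{\text{anticommuting traceless pairs}\}\cong SU(2)/\{\pm1\}\cong\RR P^3,$$
parametrized by $g\mapsto (g\bbi g^{-1},g\bbj g^{-1})$ for the $+$ side. This map is a smooth map from $SO(3)$ into $\chi^*(F_2)$.

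\textbf{Embedding and nondegeneracy.} To see that it is an embedding, it suffices to check injectivity (immediate) and injectivity of the differential. Both follow from the fact that $\phi_+$ restricted to this locus is the diffeomorphism onto $\chi(F_+)_{-1}$, which is a single point, composed with the free orbit parametrization.

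Next we show that $\kappa$ is Morse--Bott along $\chi(F_2)_{-1}$ with a positive definite normal Hessian of rank $3$. Near $-1$, write $\kappa=\kappa_-\circ\phi_-$. By Lemma \ref{WeilorWeyl}(2), $\phi_-$ is a submersion on $\chi(F_2)_{[-1,1)}$, so it suffices to show that $\kappa_-\colon\chi^*(F_-)\to[-1,1]$ has a nondegenerate minimum at its unique point $\kappa_-^{-1}(-1)$, which is the class of $(\bbi,\bbj)$.

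This is a local computation in $\chi^*(\langle r,s\rangle)\cong$ an open 3-manifold. Near $(\bbi,\bbj)$, use the slice $A=\bbi e^{a}$, $B=\bbj e^{b}$ modulo conjugation, or use Goldman's coordinates $(x,y,z)$ for the traces of $r,s,rs$ with $\kappa_-=x^2+y^2+z^2-2xyz-1$ (Fricke). At $(0,0,0)$ this is $-1+x^2+y^2+z^2+O(3)$, which is visibly nondegenerate. Because $\phi_-$ is a submersion, the pullback $\kappa$ is Morse--Bott with critical manifold $\phi_-^{-1}(\text{pt})=\chi(F_2)_{-1}$ of codimension $3$. This is consistent with $6=3+3$.

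\textbf{The sublevel set.} By Lemma \ref{WeilorWeyl}, $\kappa$ has no critical points in $\chi(F_2)_{(-1,0]}$. Fix a Riemannian metric on $\chi^*(F_2)$. The Morse--Bott lemma gives a tubular neighborhood $\nu$ of $\chi(F_2)_{-1}$ on which $\kappa=-1+|v|^2$. Flowing along $\nabla\kappa/|\nabla\kappa|^2$ from $\{\kappa=-1+\delta\}$ up to $\{\kappa=0\}$ then identifies $\chi(F_2)_{[-1,0]}$ with the closed disk bundle of the normal bundle $N$. Hence $\chi(F_2)_{[-1,0]}$ is a closed tubular neighborhood of $\chi(F_2)_{-1}$.

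It remains to show that $N$ is trivial. $N$ is the pullback under $\phi_-$ of the tangent space at the point of $\chi(F_-)_{-1}$, so $N\cong d\phi_-^*(T_{pt}\chi^*(F_-))$ is canonically trivialized. Therefore $\chi(F_2)_{[-1,0]}\cong\RR P^3\times D^3$.

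\textbf{Main obstacle.} The step needing the most care is the nondegeneracy of $\kappa_-$ at $(\bbi,\bbj)$, i.e. the local chart on $\chi^*(F_-)$. If the Fricke-coordinate argument needs justification, we will instead compute directly: with $A=\bbi e^{a}$, $B=\bbj e^{b}$ and $a,b\in su(2)$ small modulo the slice condition, expand $\Real(ABA^{-1}B^{-1})$ to second order and check that the resulting quadratic form on the 3-dimensional slice is positive definite.
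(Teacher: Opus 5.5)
Your argument is correct, but it is organized differently from the paper's. You work upstairs on the 6-manifold:
\begin{itemize}
\item you show $\kappa=\kappa_-\circ\phi_-$ is Morse--Bott along $\chi(F_2)_{-1}=\phi_-^{-1}(\rho_0)$;
\item you use the Morse--Bott lemma to model a small sublevel set as a normal disk bundle;
\item you push that model out to $\{\kappa\le 0\}$ with the gradient-like flow;
\item you trivialize the normal bundle separately, as the pullback of $T_{\rho_0}\chi^*(F_-)$.
\end{itemize}
The paper instead works downstairs on the Goldman pillow. By Propositions~\ref{prop1} and~\ref{GFLonB}, $W$ identifies $\chi(F_-)_{[-1,0]}$ with the smooth closed 3-ball $\{k\le 0\}$. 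This is where the same Morse input you use enters: the nondegenerate minimum of $k$ at the origin, and no other critical values below $1$. Then $W\circ\phi_-\colon\chi(F_2)_{[-1,0]}\to\{k\le 0\}$ is a proper submersion by Lemma~\ref{WeilorWeyl}(2). By Ehresmann it is a fiber bundle with fiber $\phi_-^{-1}(\rho_0)\cong\RR P^3$ over a contractible base, hence trivial.

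In the paper's route, the product structure and the tubular-neighborhood statement come for free: the $\RR P^3$ is the fiber over the center of the ball. No Morse--Bott lemma, flow, or separate normal-bundle computation is needed, and your normal-bundle trivialization is the infinitesimal shadow of that global trivialization. What your route buys is an explicit statement that $\kappa$ is Morse--Bott at its minimum, mirroring $\kappa_{\CC P^3}$ near $\RR P^3$. (Section~\ref{w2} gives a third, symplectic proof via the Weinstein neighborhood theorem.) The ``main obstacle'' you flag is already supplied by Propositions~\ref{prop1} and~\ref{GFLonB}, so no slice computation is needed.

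Two small repairs are needed.

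First, your sentence justifying that the parametrization is an immersion does not make sense as written. The map $\phi_+$ collapses $\chi(F_2)_{-1}$ to a point, so it cannot detect injectivity of the differential. A clean fix uses the characters $\Real\rho(\bar u w)$ with $u\in\{r_-,s_-,r_-s_-\}$ and $w\in\{r_+,s_+,r_+s_+\}$. These are smooth on $\chi^*(F_2)$, and on $[(\bbi,\bbj,g\bbi\bar g,g\bbj\bar g)]$ they give the nine matrix entries of $\mathrm{Ad}_g\in SO(3)$. So your map composed with them is the standard embedding $SO(3)\hookrightarrow\RR^9$. Hence your map is an injective immersion of a compact manifold, and therefore an embedding.

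Second, in the half-trace normalization the Fricke identity reads $\kappa_-=2(x^2+y^2+z^2-2xyz)-1$. This holds with $z=\Real\rho(rs)$ or $z=\Real\rho(r\bar s)$ alike. Your formula is missing the factor $2$ on the non-constant terms, which is harmless for nondegeneracy at the origin.
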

The proof of Theorem  \ref{thm7} is given  in Section \ref{POT8} below, and a quick symplectic proof
is given in Section \ref{w2}. 
 More challenging   is the  proof of the following theorem, given  in Section \ref{7.6}.

\begin{theorem}\label{thm8} There exists a homeomorphism of compact manifolds with boundary,
$$h\colon \chi(F_2)_{[0,1]}\to E,$$  whose restriction to   the collar $\chi(F_2)_{[0,0.5]} \subset \chi^*(F_2)$ is a  diffeomorphism onto its image.
\end{theorem}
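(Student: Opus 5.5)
Our plan is to build $h$ from the fibered structure of $\kappa$ over $[0,1]$, using the gluing map $\phi$ of (\ref{phi2}) to identify the top level set $\chi(F_2)_1$ with $S^2\times S^2$ and then coning off radially. We first analyze the level sets. For $\kappa=1$, every $\rho$ has $\rho([r_-,s_-])=1$, so the restrictions $\rho_\pm$ are abelian and $\chi(F_2)_1=\chi(F_-)_1\times\chi(F_+)_1$. Here $\chi(F_\pm)_1=\chi^\ab(F_\pm)=\TT^2/\{\pm1\}$ is a pillowcase, i.e.\ a topological $S^2$. So $\chi(F_2)_1\cong S^2\times S^2$; this also shows $\chi^\ab(F_2)\subset\{\kappa=1\}$.

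For $\kappa\in(-1,1)$, we use Lemma \ref{WeilorWeyl}(3) and identify the fibers of $\phi$: fixing $\rho_\pm$ with $\rho(C)=e^{\theta P}$ non-central, the ways of gluing are parametrized by the centralizer $\{e^{tP}\}\cong S^1$ modulo the stabilizers of the non-abelian $\rho_\pm$, giving a circle. Thus $\phi\colon\chi(F_2)_{(-1,1)}\to\chi(F_-)\times_\kappa\chi(F_+)_{(-1,1)}$ is a smooth principal $S^1$-bundle (the Goldman bending flow along $C$). Combining this with Goldman's description of $\chi(F_\pm)$ as a closed 3-ball whose level sets $\kappa_\pm^{-1}(t)$, $t\in(-1,1)$, are 2-spheres shrinking to the pillowcase boundary as $t\to1$ (the forthcoming Proposition \ref{prop1} and Lemma \ref{JAB}), the base is diffeomorphic to $S^2\times S^2\times(-1,1)$. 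Hence $\chi(F_2)_{[0,1)}$ is a circle bundle over $S^2\times S^2\times[0,1)$, and we must compute its Euler class. We would do this by computing the Euler class of the bending circle bundle restricted to $S^2\times\{p\}\times\{t\}$ with $t$ near $1$, using an explicit section or a local model near a point where $\rho_+$ is nearly abelian. We expect to get $(2,2)$, possibly up to sign fixed by the orientation from the Atiyah--Bott--Goldman form. As a cross-check, note that by Proposition \ref{P=NPforreal} only the product $ab$ matters, and it must be compatible with $H^*(\chi(F_2))$.

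On the collar $[0,0.5]$ all spaces are smooth and $\kappa$ is a submersion. Choosing a connection, the unit-circle bundle $\chi(F_2)_{[0,0.5]}\to S^2\times S^2\times[0,0.5]$ is diffeomorphic to $\partial E\times[0,0.5]$, which we map onto the annulus region of $E$ with radius $1-t$. This defines $h$ as a diffeomorphism on the collar.

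The main obstacle is extending $h$ continuously and bijectively over $[0.5,1]$, and in particular at $\kappa=1$. There the circles collapse, while both $\chi(F_\pm)$ and the base degenerate at the singular points of the pillowcases and at the 16 central characters. The plan is to define $h(\rho)$ on $\chi(F_2)_{[0.5,1)}$ by the same radial formula in a global trivialization that extends continuously to $t=1$, namely radius $1-\kappa$ with the angle given by the bending parameter. We then show that $h$ extends to $\chi(F_2)_1\to S^2\times S^2$ (the zero section) as the map $\rho\mapsto(\rho_-,\rho_+)$. To verify continuity of the inverse, we use compactness: $h$ is a continuous bijection from a compact space to a Hausdorff one, so it is a homeomorphism. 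The delicate point is continuity of the identification of the base $S^2\times S^2\times[0,1]$ with $\chi(F_-)\times_\kappa\chi(F_+)_{[0,1]}$ up to $t=1$; for this we rely on the explicit Goldman coordinates for $\chi(F_\pm)$ as a cone-like ball.
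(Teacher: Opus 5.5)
\paragraph{The top level set is not $S^2\times S^2$.} Your construction rests on identifying $\chi(F_2)_1$ with $S^2\times S^2$, and that identification is wrong. It is true that $\kappa(\rho)=1$ forces $\rho_\pm$ to be abelian. But $[\rho]$ is not determined by the pair $([\rho_-],[\rho_+])$. When both restrictions are non-central, their images lie in maximal tori $\{e^{\theta P_-}\}$ and $\{e^{\theta P_+}\}$, and the angle between $P_-$ and $P_+$ is an additional invariant. The gluing parameter is $U(1)\backslash SU(2)/U(1)\cong[0,\pi]$ (Proposition \ref{fibers2}(3); compare the parameter $\gamma$ in the map $\Psi$ of (\ref{pss})).

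So $\chi(F_2)_1$ is a 5-dimensional complex containing an open 5-ball, and is only homotopy equivalent to $S^2\times S^2$. The restriction $\chi(F_2)_1\to\chi(F_-)_1\times\chi(F_+)_1\cong S^2\times S^2$ has point or arc fibers. Your map (radius $1-\kappa$, angle from parallel transport, and $\rho\mapsto(\rho_-,\rho_+)$ onto the zero section at $\kappa=1$) is essentially the paper's continuous map $G$ into the mapping cylinder $M(\Lambda_0)$. It collapses each of these arcs to a point, so it is not a bijection, and the compact-to-Hausdorff argument does not apply. In fact no homeomorphism can be level-preserving all the way to $\kappa=1$: it would carry a set containing an open 5-ball onto the 4-dimensional zero section. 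This is why the theorem only claims a diffeomorphism on $\kappa\le 0.5$.

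\paragraph{The missing idea, and the Euler class.} You need to recognize that this map is cell-like. Its point preimages are points or arcs, which have contractible regular neighborhoods because compact semi-algebraic pairs are triangulable. Siebenmann's approximation theorem for CE maps of manifolds of dimension at least $5$ then replaces it by a homeomorphism agreeing with it on $\chi(F_2)_{[0,0.5]}$.

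The Euler class is also left open in your proposal (``we expect $(2,2)$''), and your suggested cross-check against $H^*(\chi(F_2))$ would be circular. The paper avoids any local computation:
\begin{itemize}
\item the involution of $F_2$ swapping $F_\pm$ forces $e(\Lambda_0)=(n,n)$;
\item the homotopy exact sequence of the circle bundle gives $\pi_1(\chi(F_2)_0)\cong\ZZ/n$;
\item Theorem \ref{thm7} identifies $\chi(F_2)_0=\partial\chi(F_2)_{[-1,0]}$ with $\RR P^3\times S^2$, whose fundamental group is $\ZZ/2$, so $n=2$.
\end{itemize}
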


\begin{corollary}\label{cor10}
 $\chi(F_2)$ is homeomorphic to $\CC P^3$.  
\end{corollary}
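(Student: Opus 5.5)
The plan is to assemble $\chi(F_2)$ from the two pieces of the decomposition (\ref{SVK1prime}), using Theorems \ref{thm7} and \ref{thm8}, and then to recognize the result through Proposition \ref{P=NPforreal}. Theorem \ref{thm7} gives a diffeomorphism $g\colon \chi(F_2)_{[-1,0]}\to \RR P^3\times D^3$. It restricts to a diffeomorphism $g_0$ of the boundary $\chi(F_2)_0$ onto $\RR P^3\times S^2$. Theorem \ref{thm8} gives a homeomorphism $h\colon \chi(F_2)_{[0,1]}\to E$ that is a diffeomorphism on the collar $\chi(F_2)_{[0,0.5]}$. In particular $h$ restricts to a diffeomorphism $h_0\colon \chi(F_2)_0\to \partial E$.

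Set $d=h_0\circ g_0^{-1}\colon \RR P^3\times S^2\to \partial E$, which is a diffeomorphism. Then $g$ and $h$ agree along the common boundary under the identification by $d$. They therefore glue to a continuous bijection
$$\chi(F_2)=\chi(F_2)_{[-1,0]}\cup_{\chi(F_2)_0}\chi(F_2)_{[0,1]}\longrightarrow (\RR P^3\times D^3)\cup_d E=X(d).$$
Both sides are compact Hausdorff: $\chi(F_2)$ is compact by construction, and $X(d)$ is a compact manifold. The map is continuous because it is continuous on each of the two closed pieces. A continuous bijection from a compact space to a Hausdorff space is a homeomorphism, so $\chi(F_2)\cong X(d)$.

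Proposition \ref{P=NPforreal} applies to every diffeomorphism $d$ and shows that $X(d)$ is diffeomorphic to $\CC P^3$. Hence $\chi(F_2)$ is homeomorphic to $\CC P^3$.

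The only point needing care is that $X(d)$ has a well-defined smooth structure, so that Proposition \ref{P=NPforreal} can be applied to it. This requires collars, which are available on both sides. On the $\chi(F_2)$ side, Lemma \ref{WeilorWeyl} says $\kappa$ is a submersion near $\kappa=0$, which gives a smooth collar. On the $E$ side, the smooth collar comes from $h$ restricted to $\chi(F_2)_{[0,0.5]}$. Gluing along a diffeomorphism of boundaries then yields a smooth manifold, independent of choices up to diffeomorphism. The substantive work lies entirely in Theorems \ref{thm7} and \ref{thm8}, whose proofs come later; given those results, this corollary is a formal assembly.
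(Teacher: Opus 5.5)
Your proof is correct and is essentially the paper's argument: combine the diffeomorphism of Theorem \ref{thm7} with the homeomorphism $h$ of Theorem \ref{thm8}, note that the result is $X(d)$ for a boundary diffeomorphism $d$ (which is smooth because $h$ is a diffeomorphism on the collar), and invoke Proposition \ref{P=NPforreal}. The only difference is cosmetic: the paper pulls back the smooth structure of $E$ via $h$ to get a global smooth structure on $\chi(F_2)$, agreeing with the Zariski structure on $\chi(F_2)_{[-1,0.5]}$, and obtains a diffeomorphism to $X(d)$; you instead glue $g$ and $h$ directly into a homeomorphism $\chi(F_2)\to X(d)$ by the compact-to-Hausdorff argument, which suffices for the stated conclusion (the collars needed to smooth $X(d)$ exist automatically on $\RR P^3\times D^3$ and $E$).
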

\begin{proof}
The homeomorphism $h$ of 
 Theorem \ref{thm8}  can be used to pull back the smooth structure on $E$ to a smooth structure on  $\chi(F_2)_{[0,1]}$ which agrees, on $\chi(F_2)_{[0, 0.5]}$, with the smooth structure inherited from 
  $\chi(F_2)_{[0,0.5]}\subset \chi(F_2)_{[-1,0.5]}\subset\chi^*(F_2)$. This endows $\chi(F_2)=\chi(F_2)_{[-1,0]}\cup \chi(F_2)_{[0,1]}$ with a smooth structure whose restriction to $\chi(F_2)_{[-1,0.5]}\subset \chi^*(F_2)$ coincides with the given (Zariski) smooth structure on $\chi^*(F_2)$.

  Theorem \ref{thm7} then implies that with this smooth structure,  $ \chi(F_2)$ is diffeomorphic to $X(d)$ for some diffeomorphism $d\colon \RR P^3\times S^2\to \partial E$.
  Proposition \ref{P=NPforreal} then implies that equipped with this smooth structure, the topological manifold $\chi(F_2)$ is diffeomorphic to $\CC P^3$.
 \end{proof}

   Theorems \ref{thm7} and \ref{thm8} immediately imply  Theorem \ref{thmA} and Corollary \ref{cor10} is a restatement of Theorem \ref{thmB} in the introduction.

\section{Proofs of Theorems \ref{thm7} and \ref{thm8}}\label{pfthm1}

\subsection{The Goldman pillow}\label{GP}

The fundamental groups of the punctured tori $F_\pm$ are free of rank two, generated by $r_\pm$ and $s_\pm$.  Goldman identified the character variety of the free group of rank two $\langle r,s\rangle$ with   a 3-ball, as we next explain.

The {\em Goldman pillow} \cite{GoldmanErgodic} is the semi-algebraic subspace of the 3-dimensional cube $[-1,1]^3$:
$$B=\left\{(x,y,z)\in \RR^3\mid   x^2+y^2+z^2-2xyz\leq 1, |x|\leq 1, |y|\leq 1,|z|\leq 1\right\}.$$
Its   boundary   is the {\em pillowcase}: $$\partial B=\left\{(x,y,z)\mid   x^2+y^2+z^2-2xyz=1, |x|\leq 1, |y|\leq 1,|z|\leq 1\right\}.$$
The set of four points:
\begin{equation}\label{SSigma} \{(1,1,1),(-1,1,-1),(1,-1,-1),(-1,-1,1)\}\in  \partial B \end{equation}
are called the {\em corners}.  The subset $B\setminus \partial B$ is open in $\RR^3$.

The pillowcase $\partial B$ has an alternative description as the orbit space of $\TT^2$ by the elliptic involution 
$\iota(e^{\alpha\bbi},e^{\beta\bbi})=(e^{-\alpha\bbi},e^{-\beta\bbi})$; the map $\TT^2\to \partial B$ taking $(e^{\alpha\bbi},e^{\beta\bbi})$ to $(\cos(\al),\cos(\beta), \cos(\alpha-\beta))$ is a   double branched cover, branched over the four corners, which have coordinates $\alpha, \beta\in \{0,\pi\}$.

\medskip

The following proposition is  well-known  and a straightforward exercise; see  \cite[Section 2]{GoldmanErgodic}, \cite[Proposition 3.1]{jeffrey-weitsman}, and  \cite[Proposition 3.4]{choi}.
\begin{proposition}\label{prop1} The subset  $B\subset \RR^3$ is homeomorphic to the closed 3-ball.  The map 
\begin{equation}\label{G}W\colon \chi\left(\langle r,s\rangle\right)\to \RR^3, ~ W([\rho])=(\Real(\rho(r)),\Real(\rho(s)),\Real(\rho(r\bar s)).\end{equation}
 is an embedding with image 
 $B.$ The pillowcase $\partial B$ is  the image of the subvariety $\chi^{\rm ab}\left(\langle r,s\rangle\right)$. The four corners  form the image of $\chi^{\rm cen}\left(\langle r,s\rangle\right)$. The center $(0,0,0)$ is the image of the conjugacy class of the non-abelian representation $\rho_0\colon r\mapsto \bbi, s\mapsto \bbj$.  The restriction of $W$ to $\chi^*\left(\langle r,s\rangle\right)$ is a diffeomorphism onto  the open Goldman pillow   $B\setminus \partial B$. 
\end{proposition}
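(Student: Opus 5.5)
We reduce everything to the trace coordinates $x=\Real\rho(r)$, $y=\Real\rho(s)$, $z=\Real\rho(r\bar s)$ and argue in four steps.

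\textbf{Step 1: the trace identity.} Using quaternions, write $\rho(r)=A=x+\vec a$ and $\rho(s)=B=y+\vec b$, where $\vec a,\vec b\in su(2)$ and $|\vec a|^2=1-x^2$, $|\vec b|^2=1-y^2$. Then
$$z=\Real(A\overline B)=xy+\vec a\cdot\vec b .$$
The commutator satisfies the classical Fricke identity
$$\Real([A,B])=2(x^2+y^2+z^2-2xyz)-1 .$$
One checks it directly using $\Real([A,B])=1-2|\vec a\times\vec b|^2$ together with
$$|\vec a\times \vec b|^2=|\vec a|^2|\vec b|^2-(\vec a\cdot\vec b)^2=(1-x^2)(1-y^2)-(z-xy)^2 .$$
Expanding gives $|\vec a\times\vec b|^2=1-(x^2+y^2+z^2-2xyz)$. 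Hence the inequality defining $B$ is just $|\vec a\times\vec b|^2\ge 0$, and equality holds exactly when $\vec a,\vec b$ are parallel, i.e.\ when $\rho$ is abelian. This shows $W(\chi(\langle r,s\rangle))\subset B$ and $W^{-1}(\partial B)=\chi^{\ab}$.

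\textbf{Step 2: $W$ is an injective surjection onto $B$.} Conjugation acts on the pair $(\vec a,\vec b)$ by a simultaneous rotation in $SO(3)$. A pair of vectors is determined up to rotation by $|\vec a|$, $|\vec b|$ and $\vec a\cdot\vec b$, and these are determined by $(x,y,z)$ up to the signs encoded in $x,y$ themselves. So $W$ is injective on conjugacy classes.

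For surjectivity, fix $(x,y,z)\in B$. The bounds $|x|,|y|\le1$ fix the norms of $\vec a$ and $\vec b$. The defining inequality of $B$ gives exactly $|z-xy|\le\sqrt{(1-x^2)(1-y^2)}$, which is the Cauchy--Schwarz condition needed to realize the dot product $\vec a\cdot\vec b=z-xy$ by actual vectors.

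$W$ is continuous and the domain is compact, so $W$ is an embedding with image $B$. The corners, where all entries are $\pm1$, correspond to $\vec a=\vec b=0$, i.e.\ central representations. The representation $\rho_0$ gives $(0,0,0)$.

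\textbf{Step 3: the smooth locus.} On $\chi^*$ we check that $W$ is a diffeomorphism onto the open pillow. Work in the slice where $\vec a$ points along a fixed positive ray and $\vec b$ lies in a fixed half-plane. When $\vec a,\vec b$ are non-parallel, these slice coordinates are smooth coordinates on $\chi^*(\langle r,s\rangle)$, which is the principal orbit space. In them the Jacobian of $(x,y,z)$ is nonsingular precisely when $\vec a\times\vec b\ne 0$. Thus $W|_{\chi^*}$ is an injective local diffeomorphism onto the open set $B\setminus\partial B$, and therefore a diffeomorphism.

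\textbf{Step 4: $B$ is a closed $3$-ball.} We show $B$ is star-shaped with respect to the origin, with each ray meeting $\partial B$ exactly once. Set $g(t)=t^2(x^2+y^2+z^2)-2t^3xyz$ along a ray $t\mapsto t(x,y,z)$.

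If $xyz\le0$, then $g$ is increasing in $t$.

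If $xyz>0$, monotonicity of $g$ fails in general. In that case we instead use the description $\partial B\cong\TT^2/\iota$, a $2$-sphere, together with the compactness of $B$. We then prove that $B$ is a compact convex-like region, with the cube bounds cutting off at the corners. This is the main obstacle.

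A cleaner route, which we would try first, is to use the map $(\alpha,\beta,\gamma)\mapsto$ the representation with $\vec a,\vec b$ at angle $\gamma\in[0,\pi]$. This exhibits $B$ as the quotient of $[0,\pi]^3$ by collapses on the boundary faces, so $B$ is a cone on $\partial B\cong S^2$.

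Alternatively, we can use Step 3: the interior is a smooth $3$-manifold whose closure has boundary a sphere. By Alexander/Schoenflies-type arguments applied to the stratified collar near $\partial B$, coming from the Morse function $\Real[A,B]$ near its maximum value $1$, we would identify a collar and conclude that $B\cong D^3$.
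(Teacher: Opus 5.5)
Steps 1--3 are correct. Since the paper gives no proof of this proposition (it only cites Goldman, Jeffrey--Weitsman and Choi), they amount to a clean self-contained argument for everything except the first sentence: the embedding, $W(\text{abelian locus})=\partial B$, the corners, $\rho_0$, and the diffeomorphism on $\chi^*$. The genuine gap is Step 4: you never prove that $B$ is a closed $3$-ball.

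\begin{itemize}
\item You drop the star-shaped argument when $xyz>0$.
\item The ``convex-like region'' is not an argument.
\item The $[0,\pi]^3$ route simply asserts that the quotient is a cone on $\partial B$.
\item The collar/Schoenflies route fails exactly at the corners. These are critical points of $k$ on $\partial B$, so no collar comes for free there.
\end{itemize}

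The missing observation is that monotonicity along rays does not fail inside the cube. Write $f=x^2+y^2+z^2-2xyz$, $q=x^2+y^2+z^2$ and $c=xyz$. On $[-1,1]^3$, AM--GM gives $q\ge 3|c|^{2/3}\ge 3|c|$. Hence for $p\in B\setminus\{0\}$ and $0<s<1$,
\[
\tfrac{d}{ds}f(sp)=2s\,(q-3sc)>0 .
\]
This holds because if $c>0$ then $3sc<3c\le q$, and if $c\le 0$ then $q-3sc\ge q>0$. Therefore $f(sp)<f(p)\le 1$ for $0\le s<1$.

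Next, $\{f<1\}\cap[-1,1]^3$ is open in $\RR^3$. Indeed $f(\pm1,y,z)=1+(y\mp z)^2\ge 1$, and similarly in the other two coordinates, so $\{f<1\}\cap[-1,1]^3=\{f<1\}\cap(-1,1)^3$. It follows that:
\begin{itemize}
\item $B$ is star-shaped about the interior point $0$;
\item every ray from $0$ meets $\partial B$ in exactly one point;
\item radial projection $\partial B\to S^2$ is a continuous bijection of compacta, hence a homeomorphism;
\item $(p,s)\mapsto sp$ identifies the cone on $\partial B\cong S^2$ with $B$, so $B\cong D^3$.
\end{itemize}

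Your $(\alpha,\beta,\gamma)$ route can also be completed, but the identification must be made explicitly. The fibers of $[0,\pi]^3\to\chi(\langle r,s\rangle)$ are exactly the $\gamma$-segments over the boundary of the square $[0,\pi]^2_{\alpha,\beta}$, and points elsewhere. Collapsing those segments turns $[0,\pi]^2\times[0,\pi]$ into the join $S^1*[0,\pi]\cong D^3$.
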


\subsection{A Morse function on the Goldman pillow}
Define 
\begin{equation}\label{defng}
k\colon \RR^3\to \RR, ~ k(x,y,z)=2\left(x^2+y^2+z^2 -2xyz\right)-1.
\end{equation}
Then  $$B=\{k\leq 1\}\cap [-1,1]^3\text{  and  }\partial B=\{k=1\}\cap [-1,1]^3.$$
\begin{proposition} \label{GFLonB}\hfill
\begin{enumerate}
\item 
  $\kappa =k\circ W,$ where $\kappa(\rho)=\Real(\rho([r,s])$.
  \item $k\colon \RR^3\to \RR$ is Morse with five critical points: a local minimum at $(0,0,0)$, and four index one critical points at the corners.
  \item For each of the four corner points $\sigma$, the open  line segment $(0,1)\ni t\mapsto t\sigma$ is  the image of a  gradient flow line of $k$  limiting to  $(0,0,0)$ at $-\infty$ and to   $\sigma$ at $\infty$.   
 \item All other upward gradient flow lines of $k$ from $(0,0,0)$
  intersect $\partial B $, transversely.
\end{enumerate}
 \end{proposition}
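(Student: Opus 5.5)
Part (1) is a direct trace computation. I will use the trace identity for $SU(2)$, namely $\Real(AB)+\Real(A\overline B)=2\Real(A)\Real(B)$, or equivalently ${\rm tr}(AB)+{\rm tr}(AB^{-1})={\rm tr}A\,{\rm tr}B$, together with the Fricke identity
$${\rm tr}[A,B]={\rm tr}^2A+{\rm tr}^2B+{\rm tr}^2(AB)-{\rm tr}A\,{\rm tr}B\,{\rm tr}(AB)-2.$$
Substituting ${\rm tr}=2\Real$ and writing $x=\Real\rho(r)$, $y=\Real\rho(s)$, $w=\Real\rho(rs)$ gives $\Real\rho([r,s])=2(x^2+y^2+w^2-2xyw)-1$. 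Replacing $s$ by $\overline s$ changes neither $y$ nor the commutator trace. Indeed, $[r,\overline s]$ is conjugate to $[r,s]^{-1}$, because $[r,s^{-1}]=s^{-1}(s r s^{-1} r^{-1}) s$ and $srs^{-1}r^{-1}=[r,s]^{-1}$, and an element and its inverse have the same real part. So the same formula holds with $z=\Real\rho(r\overline s)$ in place of $w$, and $\kappa=k\circ W$.

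Part (2) is calculus. We have $\nabla k=4(x-yz,\;y-xz,\;z-xy)$. At a critical point $x=yz$, $y=xz$, $z=xy$, so $x=xz^2$. If $x=0$, then $y=0$ and $z=0$. If $x\neq0$, then $z^2=1$, and similarly $x^2=y^2=1$, with $xyz=x^2=1$; these are exactly the four corners. The Hessian is $4\begin{pmatrix}1&-z&-y\\-z&1&-x\\-y&-x&1\end{pmatrix}$. At the origin it is $4I$, so the origin is a nondegenerate minimum. At a corner the matrix has the form $I-M$, where $M$ has off-diagonal entries $\pm1$ with product $xyz=1$. A sign change $D={\rm diag}(\pm1)$ conjugates $M$ to the all-ones off-diagonal matrix, whose eigenvalues are $2,-1,-1$. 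Hence $I-M$ has eigenvalues $-1,2,2$, so each corner is a nondegenerate critical point of index one.

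For part (3), take a corner $\sigma$, for instance $\sigma=(1,1,1)$ (the others follow by the sign symmetries $(x,y,z)\mapsto(\pm x,\pm y,\pm z)$ with product $+1$, which preserve $k$). On the diagonal, $\nabla k(t,t,t)=4(t-t^2)(1,1,1)$, which is parallel to the line and nonzero for $t\in(0,1)$. So the segment is an upward gradient flow line, converging to the critical points at its ends.

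Part (4) is the main point. My plan is to use the $S_4$ symmetry generated by permutations and even sign changes, which preserves $k$ and $B$, together with a transversality argument. First show that $\nabla k$ is nowhere tangent to $\partial B$ away from the corners. On $\partial B$ minus the corners, $\partial B$ is the smooth level set $\{k=1\}$ (part (2) shows there are no critical points there), so $dk(\nabla k)=|\nabla k|^2>0$ and every flow line hitting $\partial B$ at a non-corner point crosses it transversely and upward. Next, take an upward flow line from the origin. Since $k$ is Morse, it limits to either a critical point or infinity; before reaching level $1$ it stays in the interior of $B$ where $k<1$. It exits the compact region $\{k\leq1\}\cap[-1,1]^3$. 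One must check it cannot leave through a face of the cube while $k<1$: this holds because $\{k\le1\}\cap[-1,1]^3$ is the closure of the component of $\{k<1\}$ containing $0$, and this component does not touch the cube faces except at corners. To verify this, note that $k<1$ on $|x|=1$ forces $(y-z)^2<0$ after substitution, which is impossible. So the flow line reaches level $1$ either at a corner, which happens only along the stable manifolds of the index-one corners, or at a non-corner point of $\partial B$, transversely. The stable manifold of an index-one critical point in $\RR^3$ is one-dimensional, so it consists of two flow lines. One of them is the diagonal segment; the other approaches the corner from outside the cube along the diagonal ray $t>1$. Hence the only flow lines from the origin reaching corners are the four segments of part (3). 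The hard step is the check that flow lines from the origin reach $\partial B$ rather than a cube face, which is what the face computation above provides.
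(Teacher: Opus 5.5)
Your proposal is correct and follows the paper's route: part (1) via $SU(2)$ trace identities, and parts (2)--(4) from $\nabla k=4(x-yz,\,y-xz,\,z-xy)$. For (1) you go through the Fricke commutator identity and the conjugacy of $[r,\overline s]$ with $[r,s]^{-1}$, whereas the paper expands $\Real((ab\bar a)\bar b)$ directly. For (2)--(4), which the paper leaves as ``easy'', you fill in the details correctly with the Hessian, the face computation $k|_{x=\pm 1}=1+2(y\mp z)^2\ge 1$, and the one-dimensional stable manifolds of the corners.

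One small correction: the closure of the component of $\{k<1\}$ containing the origin meets the cube faces along six segments such as $\{(1,t,t)\}$, not only at the corners. This does not affect your argument, which only uses that $\{k<1\}$ itself misses the faces.
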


\begin{proof}

The trace identities for $SL(2)$,  
 ${\rm Tr}(ab)+{\rm Tr}\left(a^{-1}b\right)={\rm Tr}(a){\rm Tr}(b), ~ {\rm Tr}\left(a^{-1}\right)={\rm Tr}(a)$  imply that, for any $a,b\in SU(2)$, 
$$
\Real(ab)+\Real(\bar a b)=2\Real(a)\Real(b), ~ \Real(\overline{a})=\Real(a).
$$

 Pick   $ \rho \in \chi(\langle r,s\rangle)$ and let $a=\rho(r), b=\rho(s)$.   Then 
 $(x,y,z)=\left(\Real(a), \Real(b),\Real \left(a\overline{b}\right)\right)=W(\rho)$.
\begin{align*}
\kappa(\rho)= \Real \left(\left(a b \overline{a}\right)  \overline{b}\right)  &= -  \Real \left( 
\left(a \overline{b}\right)\left( \overline{a}\overline{b}\right)\right)+2  \Real \left(a  b \overline{a} \right)\Real \left(\overline{b}\right)  \\
  &=\Real \left( b \overline{a} ^2\overline{b}\right)-2\Real \left(a  \overline{b}\right)\Real \left(\overline{a} \overline{b}\right) + 2 \Real(b)^2\\
  &=\Real \left(\overline{a} ^2\right) -2z\left(-\Real \left(a\overline{b}\right)+ 
  2\Real \left(\overline{a}\right)\Real \left(\overline{b}\right)\right) + 2 \Real \left(b \right)^2\\
  &=-1+2x^2+2z^2-4 zxy +2y^2=k(x,y,z)=k\circ W(\rho).
  \end{align*}

 All remaining statements follow easily from the calculation
 $\nabla k=4 (x-yz,y-xz,z-xy)$.
\end{proof}

Since $k$ is Morse with a local minimum at $(0,0,0)$ and no critical values in $(-1,1)$, the level sets $k^{-1}(t)$ are smooth 2-spheres for $-1<t<1$.  Fix a diffeomorphism of $S^2$ with the 0-level set $$\alpha_0\colon S^2\xrightarrow{\cong} k^{-1}(0)\subset B.$$ 

\begin{corollary}\label{GF2} There is a  homeomorphism
$$\alpha\colon S^2\times (-1,1]\cong  B\setminus \{(0,0,0)\}
$$
which satisfies 
 $k(\alpha(\sigma,t))=t$ and $\alpha(\sigma,0)=\alpha_0(\sigma)$,   and such that  $\alpha$
 is a local diffeomorphism  away from the four corner points.
\end{corollary}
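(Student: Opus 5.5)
The plan is to flow level sets of $k$ along a rescaled gradient-like vector field. On $B\setminus\{0\}$ minus the four corners, $\nabla k\neq 0$ by Proposition \ref{GFLonB}(2). I would work with the vector field $V=\nabla k/|\nabla k|^2$, which satisfies $dk(V)=1$, so its flow $\Phi_s$ carries $k^{-1}(t)$ to $k^{-1}(t+s)$. For $\sigma\in S^2$ and $t\in(-1,1)$ I would set $\alpha(\sigma,t)=\Phi_t(\alpha_0(\sigma))$. Since $k$ has no critical points in $k^{-1}(-1,1)$ and $k^{-1}([-1+\delta,1-\delta])\cap B$ is compact, the flow exists for all $|t|<1$ starting from $k^{-1}(0)$. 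Downward, the level sets shrink to the minimum $(0,0,0)$, where $k=-1$. This gives a diffeomorphism $S^2\times(-1,1)\to \{-1<k<1\}\cap B=B\setminus(\partial B\cup\{0\})$. Here one uses that $B$ is the component of $\{k\le 1\}\cap[-1,1]^3$ and that level sets with $t<1$ stay inside the open pillow, which follows from Proposition \ref{prop1} and $\partial B=\{k=1\}\cap[-1,1]^3$.

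The next step is to extend $\alpha$ to $t=1$ by taking limits of flow lines as $t\to1$. For a point $\sigma$ whose flow line is not one of the four corner segments of Proposition \ref{GFLonB}(3), part (4) says the (reparametrized) upward gradient line meets $\partial B$ transversely at a regular point of $k$. Since $V$ is smooth near such a point, the flow reaches $k=1$ at time exactly $1$, and $\alpha$ extends to a local diffeomorphism up to the boundary there. For the four exceptional $\sigma_i$ (the intersections of the segments $t\sigma$ with $k^{-1}(0)$), I would set $\alpha(\sigma_i,1)$ equal to the corresponding corner.

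Bijectivity of the extension comes next. Every point $p$ of $\partial B$ other than a corner is a regular point of $k$ with $k=1$. Flowing backward along $-V$ from $p$ stays in $B$ and decreases $k$, so it hits $k^{-1}(0)$. The backward flow line cannot limit to a corner, since corners have $k=1$ and $k$ is decreasing. This gives injectivity and surjectivity onto $\partial B\setminus\{\text{corners}\}$. The corners are hit only by the four segments, because the stable manifold of an index-one critical point of $k$ that lies inside $B$ is one-dimensional. Indeed, the Hessian at a corner has signature $(2,1)$ up to sign conventions, and the descending direction points into $B$ along the segment. So $\alpha$ is a continuous bijection from $S^2\times(-1,1]$ onto $B\setminus\{0\}$.

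The main obstacle is continuity of $\alpha$, and of its inverse, at the four points $(\sigma_i,1)$, i.e.\ near the corners, where the flow time to reach $\partial B$ is unbounded. To handle this I would use the Morse lemma at a corner $c$ in local coordinates $k=1-u^2-v^2+w^2$ (or the appropriate signs), with $B$ locally equal to $\{k\le 1\}$ on the side containing the segment. In these coordinates the level sets near $c$ are explicit hyperboloids, and one can check directly that flow lines starting close to $\sigma_i$ reach $k=1$ close to $c$. It would also suffice to prove properness: then $\alpha$ is a continuous bijection that is a homeomorphism by invariance of domain/compactness arguments, applied to the one-point compactification or by restricting to $S^2\times[0,1]\to k^{-1}[0,1]\cap B$, which is a map between compact Hausdorff spaces. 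The identity $k(\alpha(\sigma,t))=t$ and $\alpha(\sigma,0)=\alpha_0(\sigma)$ hold by construction, and $\alpha$ is a local diffeomorphism away from the corners by smooth dependence of the flow on initial conditions together with transversality at $\partial B$.
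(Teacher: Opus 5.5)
Your proposal follows the paper's proof: flow $\alpha_0(S^2)=k^{-1}(0)\cap B$ along $\nabla k/|\nabla k|^2$, extend smoothly to $t=1$ wherever flow lines meet $\partial B$ transversely, send the four exceptional $\sigma$ to the corners, and conclude from a continuous bijection that $\alpha$ is a homeomorphism; your treatment of continuity at the corners, which the paper simply asserts, is more explicit than the original. Two small slips: for this normalized field every flow line reaches $k=1$ at time exactly $1$, so near a corner it is $|\nabla k|^{-1}$ that blows up rather than the flow time, and since the corners have index one the local model is $k=1-u^2+v^2+w^2$.
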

\begin{proof}
Let $X=\frac{\nabla k}{|\nabla k|^2}$.   The vector field $X$ is defined and smooth on the complement of the critical points of $k$ in $\RR^3$, an open set which contains  $B\setminus(\{\rm corners\}\cup\{(0,0,0)\})$.  The flow of 
$X$ defines a smooth embedding $\alpha\colon S^2\times(-1,1)\to B$ which satisfies  $\alpha(\sigma,0)=\alpha_0(\sigma)$.  Moreover 
$$k(\alpha(\sigma,0))=0\text{ and }\tfrac{d}{dt}(k(\alpha(\sigma,t)))=\nabla k\cdot \tfrac{d}{dt}\alpha=\tfrac{|\nabla k|^2}{|\nabla k|^2}=1,
$$
and therefore $k(\alpha(\sigma,t))=t$.
 
 Let $\sigma\in S^2$.  If the gradient flow line through $\alpha_0(\sigma)$  does not limit to a corner of $\partial B$, then $\alpha(\sigma,t)$ extends smoothly over a neighborhood of $ \{\sigma\}\times \{1\}$, since
$\nabla k$ is non-zero along the boundary except at the corners.  On the other hand, if the gradient flow line through $\alpha_0(\sigma)$  limits to a corner, then defining $\alpha(\sigma,1)$ to be this limiting point defines a {\em continuous} bijective extension of $\alpha$ to  $S^2\times (-1,1]$. It follows that this extension is a homeomorphism.
\end{proof}

\subsection{Proof of Theorem \ref{thm7} }\label{POT8}
Since $r,s\in SU(2)$ satisfy $[r,s]=-1$ if and only if $0=\Real(r)=\Real(s)=\Real(r\bar s)$, it follows that the function
 $$\RR P^3\cong SU(2)/\{\pm 1\}\to \chi^*(F_2),~ A\mapsto\left(r_-\mapsto\bbi,~s_-\mapsto\bbj,~ r_+\mapsto A\bbj\bar A, ~s_+\mapsto A\bbi\bar A \right)
 $$
 is a smooth embedding with image $\chi(F_2)_{-1}$.   
  Notice that  $\chi(F_2)_{-1}$ is also equal to the fiber  of $W\circ \phi_-\colon \chi(F^2)\to B$ over $(0,0,0)$.
The map $W\colon \chi(F_-)_{[-1,0]}\to B$ is a diffeomorphism onto its image, the smooth closed 3-ball $B_{[-1,0]}:=\{k\leq 0\}$, by Propositions \ref{prop1} and \ref{GFLonB}. 
 
Part 2 of Lemma \ref{WeilorWeyl} implies that $W\circ \phi_-\colon \chi(F_2)_{[-1,0]}\to B_{[-1,0]}$ is a smooth fiber bundle  with with fiber $\RR P^3$.    Hence $\chi(F_2)_{[-1,0]}$ is diffeomorphic to $\RR P^3\times D^3$, completing the proof of Theorem \ref{thm7}. \qed

\subsection{Further properties of the maps in Diagram (\ref{SVK3})} \label{sectiondiagram}

We next establish several additional properties of the maps in   (\ref{SVK3}) which are used to complete  the proof of Theorem \ref{thm8} in the next subsection.

Consider the $U(1)$ action on $\Hom(\pi_1(F_2),SU(2))_{(-1,1)}$,  known as the {\em modified Goldman twist flow associated to the separating curve $C$} \cite{goldman-invariant,jeffrey-weitsman},
 given  by the formula   
$$e^{\theta\bbi}\cdot \rho\colon  r_-\mapsto \rho(r_-), ~s_-\mapsto \rho(s_-),
 r_+\mapsto e^{\theta I(\rho)} \rho(r_+)e^{-\theta I(\rho)}, ~s_+\mapsto e^{\theta I(\rho)}\rho(s_+)e^{-\theta I(\rho)},
$$
where $$I(\rho)=\tfrac{\Ima(\rho([r_-,s_-])}{\sqrt{1-\kappa(\rho)^2} }\in S^2.$$
It is easy to verify  \cite[Theorem 4.5]{goldman-invariant} that the modified twist flow descends to a free  $U(1)/\{\pm 1\}$ action on $\chi(F_2)_{(-1,1)}$ whose orbits are the fibers of $\phi$.   
\begin{proposition} \label{deadly}
The restriction $\phi\colon \chi(F_2)_{(-1,1)}\to \chi(F_-)\times_{\kappa} \chi(F_+)_{(-1,1)}$ 
 is a smooth principal $S^1$ bundle with respect to the modified Goldman twist flow
 associated to the separating curve $C$.\end{proposition}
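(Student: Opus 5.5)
We will show that $\phi$ restricted to $\chi(F_2)_{(-1,1)}$ is a surjective submersion whose fibers are exactly the orbits of a free, smooth circle action. A proper map with these properties is a principal bundle by the slice theorem.

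First, smoothness of the action. On $\Hom(\pi_1(F_2),SU(2))_{(-1,1)}$ we have $\kappa(\rho)\in(-1,1)$, so $\Ima\rho([r_-,s_-])\neq 0$. Hence $I(\rho)$ is a smooth map to $S^2$. The formula $e^{\theta\bbi}\cdot\rho$ is then smooth in $(\theta,\rho)$.

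Next, the formula defines a representation of $\pi_1(F_2)$. Since $e^{\theta I(\rho)}$ commutes with $\rho(C)=\kappa+\sqrt{1-\kappa^2}\,I(\rho)$, conjugating $\rho(r_+),\rho(s_+)$ by it preserves $\rho([s_+,r_+])=\rho(C)$. So the relation $[r_-,s_-][r_+,s_+]=1$ still holds. In addition:
\begin{itemize}
\item $\kappa$ is unchanged, so $I(e^{\theta\bbi}\cdot\rho)=I(\rho)$ and the formula gives a group action of $\RR/2\pi\ZZ$;
\item the action is equivariant under global conjugation, because $I(g\rho g^{-1})=gI(\rho)g^{-1}$.
\end{itemize}
It therefore descends to $\chi(F_2)_{(-1,1)}$. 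Taking $\theta=\pi$ conjugates by $e^{\pi I}=-1$, which acts trivially, so the action factors through $U(1)/\{\pm1\}\cong S^1$.

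Then we identify the fibers of $\phi$ with the orbits. Suppose $\phi(\rho)=\phi(\rho')$. After conjugating we may assume $\rho|_{F_-}=\rho'|_{F_-}$. Also $\rho'|_{F_+}=g\rho|_{F_+}g^{-1}$ for some $g\in SU(2)$. Comparing the values on $C$ gives $g\rho(C)g^{-1}=\rho(C)$. Since $\rho(C)\neq\pm1$, the element $g$ lies in the circle $\{e^{\theta I(\rho)}\}$, so $\rho'$ lies in the orbit of $\rho$. The converse inclusion (orbits lie in fibers) is immediate.

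Freeness modulo $\pm1$ takes one more step. Suppose $e^{\theta\bbi}\cdot\rho$ is conjugate to $\rho$ by some $h$. Then $h$ centralizes $\rho(\pi_1F_-)$, which is non-abelian because $\kappa\neq1$, so $h=\pm1$. Consequently $e^{\theta I}$ centralizes the non-abelian group $\rho(\pi_1F_+)$, which forces $e^{\theta I}=\pm1$.

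Finally, $\chi(F_2)_{(-1,1)}$ is a smooth manifold on which $S^1$ acts smoothly and freely. By the slice theorem the quotient is a smooth 5-manifold and the quotient map is a principal $S^1$ bundle.

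By Lemma \ref{WeilorWeyl}(3), $\phi$ is a surjective submersion onto the smooth 5-manifold $\chi(F_-)\times_\kappa\chi(F_+)_{(-1,1)}$, and it is constant on orbits. The induced map from the quotient is therefore a bijective submersion between 5-manifolds, hence a diffeomorphism.

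The step I expect to need the most care is the smoothness of the action and its descent to the quotient. This rests on the non-vanishing of $\Ima\rho(C)$ for $|\kappa|<1$ and on the conjugation-equivariance of $I$.
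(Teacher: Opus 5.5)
Your proof is correct and follows the paper's route. The paper's one-line proof combines four facts: $\chi(F_2)_{(-1,1)}$ and $\chi(F_-)\times_\kappa\chi(F_+)_{(-1,1)}$ are smooth manifolds of dimensions $6$ and $5$; $\phi$ is a submersion (Lemma \ref{WeilorWeyl}); the $U(1)/\{\pm1\}$ twist action is free; and its orbits are the fibers of $\phi$ (the paper cites Goldman for these last two). You verify the action's well-definedness, conjugation-equivariance, freeness and fibers-equal-orbits explicitly, and you make the final step precise via the quotient manifold theorem together with the observation that the induced bijective submersion between $5$-manifolds is a diffeomorphism.
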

\begin{proof} Since $\chi(F_2)_{(-1,1)}$ is a smooth 6-manifold, $\chi(F_-)\times_\kappa \chi(F_+)_{(-1,1)}$ is a smooth 5-manifold,  the $U(1)/\{ \pm 1\}$ action is free, and the fibers of $$\phi=\phi_-\times \phi_+\colon \chi(F_2)_{(-1,1)}\to \chi(F_-)\times_\kappa \chi(F_+)_{(-1,1)}$$
coincide with orbits, the proposition follows from the second part of  
Lemma \ref{WeilorWeyl}.      
\end{proof}
 
 The fibers of $\phi\colon\chi(F_2)\to \chi(F_-)\times_{\kappa}\chi(F_+)$ 
are identified in the following proposition.

\begin{proposition} \label{fibers2}
The fiber of   $\phi\colon \chi(F_2)\to \chi(F_-)\times_{\kappa} \chi(F_+)$
\begin{enumerate}
\item is diffeomorphic to   $S^1$ over any point $(\rho_-,\rho_+)$ with $-1<\kappa(\rho_-)<1$, 
\item is diffeomorphic to  $\RR P^3$ over the the unique  point  $(\rho_0,\rho_0)$ satisfying $\kappa=-1$, where $\rho_0$ is the representation defined in Proposition \ref{prop1}, and
\item  is homeomorphic to  a point or an interval over any point $(\rho_-,\rho_+)$  satisfying $\kappa(\rho_-)=1$, according to whether or not one of $\rho_-,\rho_+$ is central.

\end{enumerate}
\end{proposition}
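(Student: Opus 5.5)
The plan is to compute the fiber directly from the gluing description. Fix $(\rho_-,\rho_+)$ in the pullback and choose representatives $a_\pm=\rho_\pm(r_\pm)$, $b_\pm=\rho_\pm(s_\pm)$. A point in the fiber is the conjugacy class of a representation $\rho$ with $\rho|_{F_-}=\rho_-$ and $\rho|_{F_+}=g\rho_+g^{-1}$ for some $g\in SU(2)$. The gluing condition is $[a_-,b_-]=g[b_+,a_+]g^{-1}$. Two such $g,g'$ give conjugate $\rho$ exactly when $g'\in \Stab(\rho_-)\, g\, \Stab(\rho_+)$. The fiber is therefore the double coset space
$$\Stab(\rho_-)\backslash \{g\mid g\,c_+\,g^{-1}=c_-\}/\Stab(\rho_+),$$
where $c_-=\rho_-([r_-,s_-])$ and $c_+=\rho_+([s_+,r_+])$. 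These have the same real part, since $\kappa(\rho_-)=\kappa(\rho_+)$ and $\Real$ is inverse-invariant. The three cases then follow from identifying the stabilizers and the set of $g$ conjugating $c_+$ to $c_-$.

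For case (1), $|\kappa|<1$, so $c_\pm$ are non-central and the set of $g$ with $gc_+g^{-1}=c_-$ is a coset $g_0\,Z(c_+)\cong S^1$. Both $\rho_\pm$ are non-abelian, so their stabilizers are $\{\pm1\}$, and the fiber is $S^1/\{\pm1\}\cong S^1$. Smoothness of this identification is already supplied by Proposition~\ref{deadly}: $\phi$ is a principal $S^1$ bundle over $(-1,1)$, so the fiber is a smooth circle.

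For case (2), $\kappa=-1$ forces $[a_-,b_-]=-1$. As in Section~\ref{POT8}, this is equivalent to $0=\Real(a)=\Real(b)=\Real(a\bar b)$, so by Proposition~\ref{prop1} $\rho_\pm$ are both the class of $\rho_0$. Now $c_\pm=-1$ is central, every $g$ qualifies, and the stabilizers are $\{\pm1\}$. The fiber is $SU(2)/\{\pm1\}\cong\RR P^3$, and it coincides with $\chi(F_2)_{-1}$. The smooth embedding of Section~\ref{POT8} gives the diffeomorphism.

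For case (3), $\kappa=1$ means $c_\pm=1$, so both $\rho_\pm$ are abelian and every $g$ qualifies. The fiber is $\Stab(\rho_-)\backslash SU(2)/\Stab(\rho_+)$.
\begin{itemize}
\item If either $\rho_\pm$ is central, its stabilizer is $SU(2)$ and the fiber is a point.
\item Otherwise both stabilizers are circles. Conjugating, these are $U(1)=\{e^{\theta\bbi}\}$ on both sides, and the fiber is $U(1)\backslash SU(2)/U(1)$.
\end{itemize}
This double coset space is identified with $[0,1]$ by $g\mapsto \Real(g\,\bbi\,\bar g\,\bbi)$, or equivalently by the angle between $\bbi$ and $g\bbi\bar g$ in $S^2$. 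Indeed $U(1)\backslash SU(2)$ is $S^2$ via the Hopf map $\eta$ from Lemma~\ref{kicka}, and the residual right $U(1)$ action rotates $S^2$ about the $\bbi$ axis, with orbit space an interval parametrized by height. Continuity together with bijectivity onto a compact Hausdorff space gives the homeomorphism.

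The main obstacle I expect is bookkeeping: making sure the double coset description of the fiber is correct, so that conjugacy in $\chi(F_2)$ matches exactly the $\Stab(\rho_-)\times\Stab(\rho_+)$ ambiguity. In case (3) the two stabilizer circles must also be handled correctly when the abelian representations $\rho_\pm$ lie in different maximal tori. I would first conjugate both into $\{e^{\theta\bbi}\}$, which is harmless because the fiber only depends on the conjugacy classes of $\rho_\pm$.
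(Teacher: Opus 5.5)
Your proposal is correct and follows the paper's proof. Both identify the fiber with the double coset $\Stab(\rho_-)\backslash\{g : g\,c_+\,g^{-1}=c_-\}/\Stab(\rho_+)$ (the paper quotes this from the literature, while you derive it directly) and then read off $S^1$, $SU(2)/\{\pm 1\}\cong\RR P^3$, a point, or $U(1)\backslash SU(2)/U(1)$ in the three cases. Your appeals to Proposition~\ref{deadly} and Section~\ref{POT8} for the diffeomorphism claims are appropriate; the only slip is cosmetic: $g\mapsto\Real(g\,\bbi\,\bar g\,\bbi)$ has image $[-1,1]$, not $[0,1]$.
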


\begin{proof} 
We apply \cite[Theorem 3.4]{JW2} or \cite[Lemma 4.2]{HHK} which identifies gluing parameters in terms of double cosets of centralizers in $SU(2)$.

The first statement follows from Proposition \ref{deadly}, but we illustrate the double coset approach as a second proof.  If $-1<\kappa<1$, then $\Stab(\rho(C))  \cong U(1)$ and $\Stab(\rho_{\pm} )= \{\pm 1\}$, so the fiber is a circle $\{\pm 1\} \backslash U(1) / \{\pm 1\} \cong U(1)/\{\pm 1\}\cong S^1$.   

If $\kappa =-1$, then both $\rho_+$ and $\rho_-$ equal $\rho_0$, which is non-abelian, so $\Stab(\rho_{\pm})=\{\pm 1\}$, but $\Stab(\rho(C))=SU(2)$ since $\rho(C)$ is central.
Hence, the fiber is $\{\pm 1\}\backslash SU(2)/\{\pm 1\}=SO(3)\cong \RR P^3.$ 

If $\kappa=1$, then $\Stab(\rho(C))=SU(2)$ and both of $\rho_\pm$ are abelian.  If at least one of $\rho_\pm$ is central, then the fiber is a point, and if both are non-central, then the fiber is $U(1)\backslash SU(2) / U(1)\cong S^2/U(1) \cong [-1,1].$  
 \end{proof}

Finally, we prove that $\chi(F_-)\times_{\kappa}\chi(F_+)$ is homeomorphic to a cone on $S^2\times S^2$.  Recall the following facts.  Proposition \ref{prop1} defines   homeomorphisms $W_\pm\colon \chi(F_\pm)\to B$ which are
diffeomorphisms away from the  four corner points. Proposition \ref{GFLonB} shows that $k\circ W_\pm=\kappa$, so that $W\times W$ induces a homeomorphism $\chi(F_-)\times_{\kappa}\chi(F_+)\to B\times _k B$.   A homeomorphism
 $\alpha\colon  S^2\times (-1,1]\to B_{(-1,1]}$, which satisfies $k\circ \alpha (\sigma,t)=t$ and  is a diffeomorphism except at the corners, is   constructed in  Corollary \ref{GF2}.

 Combine these to define 
\begin{equation*}\label{WHA}
\Gamma\colon \chi(F_-)\times_\kappa\chi(F_+)_{(-1,1]}\to S^2\times S^2\times(-1,1]
\end{equation*}
by $$\Gamma(\rho_-,\rho_+)= \big( (\alpha^{-1}(W(\rho_-)),  \alpha^{-1}(W (\rho_+)),\kappa(\rho_-)\big).$$
Since $\chi(F_-)\times_\kappa\chi(F_+)_{-1}$ consists of the single point $(\rho_0,\rho_0)$, the proof of following lemma is clear.

 \begin{lemma}\label{JAB} $\Gamma$ is a homeomorphism, and its restriction  $$\Gamma\colon \chi(F_-)\times_\kappa\chi(F_+)_{(-1,1)}\to S^2\times S^2\times(-1,1),$$ is a diffeomorphism.  
The compact 
space
 $\chi(F_-)\times_{\kappa}\chi(F_+)$ is homeomorphic to  {\rm Cone}$(S^2\times S^2)$.
\end{lemma}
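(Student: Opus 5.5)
We must prove Lemma \ref{JAB}: $\Gamma$ is a homeomorphism, it restricts to a diffeomorphism over $(-1,1)$, and $\chi(F_-)\times_\kappa\chi(F_+)$ is a cone on $S^2\times S^2$. The plan is to reduce everything to the corresponding statements for $W$ (Proposition \ref{prop1}) and $\alpha$ (Corollary \ref{GF2}).

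First we show $\Gamma$ is a bijection. The map $W\times W$ identifies $\chi(F_-)\times_\kappa\chi(F_+)$ homeomorphically with $B\times_k B=\{(b_-,b_+)\in B\times B\mid k(b_-)=k(b_+)\}$, since $W$ is a homeomorphism onto $B$ and $k\circ W=\kappa$. Over $(-1,1]$ both $b_\pm$ avoid the origin, which is the only point of $B$ with $k=-1$. We then apply $\alpha^{-1}\times\alpha^{-1}$, which is a homeomorphism $B_{(-1,1]}\times B_{(-1,1]}\to (S^2\times(-1,1])^2$. The condition $k(b_-)=k(b_+)$ becomes equality of the two $(-1,1]$ coordinates, because $k(\alpha(\sigma,t))=t$. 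So the fiber product maps bijectively onto the diagonal copy $S^2\times S^2\times(-1,1]$, and $\Gamma$ is exactly this composite.

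Next, continuity of $\Gamma$ and of its inverse follows because both are built from continuous maps: the inverse is $(\sigma_-,\sigma_+,t)\mapsto (W^{-1}\alpha(\sigma_-,t),W^{-1}\alpha(\sigma_+,t))$. Hence $\Gamma$ is a homeomorphism. For $t\in(-1,1)$ no corner points occur, because corners lie at $k=1$. There $W$ is a diffeomorphism onto the open pillow (Proposition \ref{prop1}) and $\alpha$ is a diffeomorphism. By Lemma \ref{WeilorWeyl}, the source is a smooth 5-manifold cut out transversely by $\kappa(\rho_+)-\kappa(\rho_-)=0$. The composite map and its inverse are restrictions of smooth maps on ambient products to transversely cut-out submanifolds, so they are smooth. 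This gives the diffeomorphism statement.

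Finally, for the cone statement, the whole space is the union of the image of $\Gamma^{-1}$ and the single point $(\rho_0,\rho_0)$ at $\kappa=-1$. Define a map from ${\rm Cone}(S^2\times S^2)=S^2\times S^2\times[-1,1]/(S^2\times S^2\times\{-1\})$ sending the cone point to $(\rho_0,\rho_0)$ and agreeing with $\Gamma^{-1}$ elsewhere. This map is a bijection. The only nontrivial point is continuity at the cone point. As $t\to-1$, $\alpha(\sigma,t)\to(0,0,0)$ uniformly in $\sigma$, because $k^{-1}([-1,t])$ shrinks to the origin: $k$ has a nondegenerate strict minimum there and $B$ is compact. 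Since the domain is compact and the target is Hausdorff, this continuous bijection is a homeomorphism. The main (mild) obstacle is this uniform convergence at the cone point; everything else is bookkeeping.
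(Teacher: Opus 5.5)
Your proof is correct and is the argument the paper has in mind. The paper simply calls the lemma clear, citing $k\circ W=\kappa$, $k\circ\alpha(\sigma,t)=t$, the regularity of $W$ and $\alpha$ away from the corners, and the fact that the $\kappa=-1$ level of the fiber product is the single point $(\rho_0,\rho_0)$; you have filled in exactly those details. One small precision: at the cone point, what you need is not nondegeneracy of the minimum but that the origin is the only point of $B$ with $k=-1$, so that the nested compact sets $B\cap k^{-1}([-1,t])$ shrink to it as $t\to -1$.
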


 Set
\begin{equation}\label{WHA1}
\Lambda=  \Gamma \circ \phi\colon  \chi(F_2)_{(-1,1]}\to S^2\times S^2\times(-1,1]
\end{equation}
and, for fixed $t\in(-1,1]$,
\begin{equation}\label{WHA2}
\Lambda_t=  \Gamma \circ \phi\colon  \chi(F_2)_{t}\to S^2\times S^2\times\{t\}\overset{\rm Proj}{\underset{\cong}\longrightarrow} S^2\times S^2
\end{equation}

Proposition \ref{deadly} implies the following: 
\begin{lemma}\label{principal} 
 The restriction
\begin{equation*}
\Lambda\colon \chi(F_2)_{(-1,1)}\to S^2\times S^2\times (-1,1) 
\end{equation*}
is a  smooth principal $S^1$ bundle.  
The fibers of $\Lambda_1\colon \chi(F_2)_1\to S^2\times S^2$ coincide with the 
 fibers of $\phi\colon \chi(F_2)_1\to \chi(F_-)\times_\kappa\chi(F_+)_1$, identified as single points or closed intervals in Proposition \ref{fibers2}.  
 \end{lemma}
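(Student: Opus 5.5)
The first claim is immediate from composing two facts already established. By Proposition \ref{deadly}, the restriction $\phi\colon \chi(F_2)_{(-1,1)}\to \chi(F_-)\times_\kappa\chi(F_+)_{(-1,1)}$ is a smooth principal $S^1$ bundle for the free $U(1)/\{\pm1\}$ action given by the modified Goldman twist flow along $C$. By Lemma \ref{JAB}, $\Gamma$ restricts to a diffeomorphism $\chi(F_-)\times_\kappa\chi(F_+)_{(-1,1)}\to S^2\times S^2\times(-1,1)$. Composing a principal bundle projection with a diffeomorphism of the base yields a principal bundle for the same action. Hence $\Lambda=\Gamma\circ\phi$ on $\chi(F_2)_{(-1,1)}$ is a smooth principal $S^1$ bundle.

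I would make one point explicit: $\phi$ maps $\chi(F_2)_{(-1,1)}$ onto $\chi(F_-)\times_\kappa\chi(F_+)_{(-1,1)}$. Surjectivity holds by part (1) of Lemma \ref{WeilorWeyl}. The preimage is exactly the $\kappa\in(-1,1)$ part, because $\kappa$ on $\chi(F_2)$ factors through $\phi$ via $\kappa_-\circ \text{Proj}_-$. So the total space is the correct one, and local triviality is inherited directly from Proposition \ref{deadly}.

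For the second claim, I restrict to the level $\kappa=1$. Here $\Lambda_1=\text{Proj}\circ\Gamma\circ\phi$, where $\Gamma$ restricted to $\chi(F_-)\times_\kappa\chi(F_+)_1$ is a bijection onto $S^2\times S^2\times\{1\}$, since $\Gamma$ is a homeomorphism by Lemma \ref{JAB}. The projection $S^2\times S^2\times\{1\}\to S^2\times S^2$ is also a bijection. Composing $\phi$ with an injective map does not change its fibers, so the fibers of $\Lambda_1$ are precisely the fibers of $\phi$ over $\kappa=1$. Proposition \ref{fibers2}(3) identifies these as points or closed intervals.

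The only step requiring care is the injectivity of $\Gamma$ at $\kappa=1$. There $\alpha$ is merely a homeomorphism, not a diffeomorphism, at the corners (Corollary \ref{GF2}). Bijectivity is all this argument needs, however, so I expect no real obstacle.
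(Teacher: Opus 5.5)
Your proposal is correct and matches the paper's argument. The paper obtains the lemma directly from Proposition \ref{deadly} combined with the diffeomorphism/homeomorphism $\Gamma$ of Lemma \ref{JAB}, just as you do; your remarks about surjectivity onto the $(-1,1)$ part and bijectivity of $\Gamma$ at $\kappa=1$ spell out steps the paper leaves implicit.
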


\subsection{The proof of Theorem \ref{thm8}} \label{7.6}

 Fix a connection on the smooth $S^1$ bundle $\Lambda$ in Lemma \ref{principal}.  Define a smooth retraction
  $R\colon \chi(F_2)_{(-1,1)}\to \chi(F_2)_{0}$, by taking $R(\rho)$ to be the endpoint of the horizontal lift, starting at $\rho$,  of the smooth segment $(\sigma _-,\sigma _+, (1-u)t), u\in [0,1]$ where $\Lambda(\rho)=(\sigma _-,\sigma _+,t)$. The following diagram commutes. 
  \[
  \begin{tikzcd}
  \chi(F_2)_{0}\arrow[d,swap,"\Lambda_0"] & \chi(F_2)_{(-1,1)}\arrow[d," \Lambda"]\arrow[l,swap,"R"]
  \arrow[r,hook]  &    \chi(F_2)_{(-1,1]}\arrow[d,"\Lambda"] \arrow[dr,"\kappa"]&\\
 S^2\times S^2&S^2\times S^2\times (-1,1)\arrow[r,hook]\arrow[l,"{\rm Proj}"]&S^2\times S^2\times (-1,1]  
 \arrow[r, swap,"{\rm Proj}"] &(-1,1] \end{tikzcd}
  \]

  The (open) mapping cylinder of $\Lambda_0$, $M(\Lambda_0)$ is, by definition,  the identification space obtained
  from $\chi(F_2)_0\times (-1,1]$ by identifying points $( \rho ,1)$ with $( \rho' ,1)$ if and only if 
  $\Lambda_0( \rho )=\Lambda_0( \rho' )$.  In particular there is a canonical inclusion 
  $z\colon S^2\times S^2\hookrightarrow M(\Lambda_0)$ which takes $(\sigma _-, \sigma _+)$ to the equivalence class of $( \rho , 1)$ in the mapping cylinder, where
  $ \rho \in \Lambda_0^{-1}(\sigma _-, \sigma _+)$.

 \medskip 
     Define $G\colon \chi(F_2)_{(-1,1]}\to M(\Lambda_0)$ by
\begin{equation}\label{ache}
  G([\rho])=\begin{cases}
  (R(\rho ,\kappa(\rho) )&  \kappa(\rho)<1,\\
  z\big(\Lambda_1(\rho)\big)& \kappa(\rho)=1.
  \end{cases}
\end{equation}
 \begin{proposition} The map $G$ is  continuous and its restriction $G|_{\chi(F_2)_{(-1,1)}}$ is a diffeomorphism onto $\chi(F_2)_0  \times (-1,1)=  M(\Lambda_0)\setminus z(S^2\times S^2)$.   The fibers of $G$  over points in $z(S^2\times S^2)$ coincide with the fibers of $\Lambda_1\colon \chi(F_2)_1\to S^2\times S^2$ and are therefore points or closed intervals.
 
  \end{proposition}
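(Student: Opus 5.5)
The plan is to handle the open part $\chi(F_2)_{(-1,1)}$ first, and then deal with continuity at the level $\kappa=1$, which is where the real work lies.

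\emph{The open part.} On $\chi(F_2)_{(-1,1)}$ the map $G$ is $\rho\mapsto (R(\rho),\kappa(\rho))\in \chi(F_2)_0\times(-1,1)$. Both components are smooth: $R$ is defined by horizontal lifting of a smooth family of paths with respect to a smooth connection, and $\kappa$ is a smooth submersion by Lemma \ref{WeilorWeyl}. To show this is a diffeomorphism, I will write down the inverse $(\rho',t)\mapsto$ the endpoint of the horizontal lift, starting at $\rho'$, of the segment $(\sigma_-,\sigma_+,ut)$, $u\in[0,1]$, where $\Lambda_0(\rho')=(\sigma_-,\sigma_+)$. Horizontal lifts of reversed paths are the reversed lifts, so this map inverts $G$. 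Smooth dependence of parallel transport on parameters shows the inverse is smooth. Equivalently, parallel transport along the $(-1,1)$ factor trivializes the principal bundle $\Lambda$ of Lemma \ref{principal} as $\Lambda_0^*$-pullback times $(-1,1)$.

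\emph{The fibers over $z(S^2\times S^2)$.} By definition, $G$ restricted to $\chi(F_2)_1$ is $z\circ\Lambda_1$, and $z$ is injective. Hence the fibers are exactly the fibers of $\Lambda_1$. By Lemma \ref{principal} and Proposition \ref{fibers2}, these are points or closed intervals.

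\emph{Continuity at $\kappa=1$.} This is the main obstacle. Let $\rho_n\to\rho$ with $\kappa(\rho)=1$. We must show $G(\rho_n)\to z(\Lambda_1(\rho))$ in $M(\Lambda_0)$, meaning: eventually $\kappa(\rho_n)$ is close to $1$, and $\Lambda_0(R(\rho_n))$ is close to $\Lambda_1(\rho)$. The neighborhoods of $z(p)$ in the mapping cylinder are of the form $\Lambda_0^{-1}(V)\times(1-\delta,1]$ modulo the identification, where $V$ is a neighborhood of $p$. The key identity, from the commuting diagram, is $\Lambda_0(R(\rho_n))=\mathrm{Proj}(\Lambda(\rho_n))$, because horizontal lifts cover the given paths and these paths keep the $S^2\times S^2$ coordinates fixed. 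Since $\Lambda=\Gamma\circ\phi$ is continuous on all of $\chi(F_2)_{(-1,1]}$ (Lemma \ref{JAB}), we get $\mathrm{Proj}\,\Lambda(\rho_n)\to\mathrm{Proj}\,\Lambda(\rho)=\Lambda_1(\rho)$, and also $\kappa(\rho_n)\to1$. Together these give the required convergence. The delicate point is to check the mapping-cylinder topology carefully: the quotient topology on the open mapping cylinder need not be first countable a priori. I would handle this by using compactness of $\chi(F_2)_0$ to show that the sets above form a neighborhood basis, and then argue with nets or directly with open sets, since sequences may not suffice.
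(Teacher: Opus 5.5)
Your proposal is correct. For continuity at $\kappa=1$ it takes a somewhat different route from the paper.

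The paper argues with sequences, justified by the metrizability of $\chi(F_2)$ (compact semi-algebraic sets are triangulable). Given $\rho_i\to\rho_\infty\in\chi(F_2)_1$, it uses compactness of $\chi(F_2)_0$ to pass to a subsequence along which $R(\rho_i)$ converges. It then uses the identity $\Lambda_0\circ R={\rm Proj}\circ\Lambda$, which you also isolate, to see that the limit lies in $\Lambda_0^{-1}(\Lambda_1(\rho_\infty))$. It concludes by continuity of the quotient map $\chi(F_2)_0\times(-1,1]\to M(\Lambda_0)$.

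You instead use the compactness of $\chi(F_2)_0$ to show that the images of $\Lambda_0^{-1}(V)\times(1-\delta,1]$ form a neighborhood basis at $z(p)$; this follows from the tube lemma together with closedness of the proper map $\Lambda_0$. Once that is in hand, $G^{-1}$ of such a set is exactly $\kappa^{-1}((1-\delta,1])\cap({\rm Proj}\circ\Lambda)^{-1}(V)$, which is open. Continuity then follows directly. This treats points of $\chi(F_2)_1$ and nearby points of $\chi(F_2)_{(-1,1)}$ uniformly, and needs no metrizability input.

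Your worry that ``sequences may not suffice'' is misdirected. Sequential continuity implies continuity whenever the \emph{domain} is first countable, whatever the target is, and that is exactly what the paper's appeal to metrizability of $\chi(F_2)$ secures.

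Finally, you supply an explicit smooth inverse (parallel transport along the reversed segments) for the diffeomorphism claim on $\chi(F_2)_{(-1,1)}$. The paper treats that claim as not needing justification.
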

 \begin{proof}  Only the continuity claim needs justification. Any compact real semi-algebraic set is homeomorphic to a finite simplicial complex (\cite[Theorem 9.2.1]{BCR}), and hence $\chi(F_2)$ is metrizable.  
If $ \rho_i \in \chi(F_2)_{(-1,1)}$ is any sequence which converges in $\chi(F_2)$ to $ \rho_\infty \in \chi(F_2)_1$, then 
 $\kappa(\rho_i)$ converges to $1$ and 
 ${\rm Proj}\circ \Lambda(\rho_i)$ converges to $\Lambda_1( \rho_\infty )$ in $S^2\times S^2$.
 
 The sequence $R( \rho_i )$ lies in the compact manifold $\chi(F_2)_0$, and hence by passing to a subsequence we may assume that $R(\rho_i)$ converges. Commutativity of the diagram shows that
 $$ \Lambda_0(\lim R(\rho_i))=\lim \Lambda_0(R(\rho_i))=\lim {\rm Proj}\circ \Lambda (\rho_i) = {\rm Proj}\circ \Lambda (\rho_\infty)=\Lambda_1(\rho_\infty),$$
 so that $\lim R(\rho_i)\in \chi(F_2)_0$ lies in the fiber of $\Lambda_0$ over $\Lambda_1(\rho_\infty)$.
Therefore,  $$\lim G(\rho_i)=z(\Lambda_1(\rho_\infty))=G(\rho_\infty).
 $$
 \end{proof}

\begin{corollary} The compact manifold 
 $\chi(F_2)_{[0,1]}$ is homeomorphic to the total space of the closed disk bundle over $S^2\times S^2$  associated to the smooth principal $S^1$ bundle $\Lambda_0\colon \chi(F_2)_0\to S^2\times S^2$,   by a homeomorphism which is a diffeomorphism  near the boundary  $\chi(F_2)_{0}$.
\end{corollary}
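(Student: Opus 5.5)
The plan is to compare two mapping cylinders. Let $P=\chi(F_2)_0$ and $\pi=\Lambda_0\colon P\to S^2\times S^2$; by Lemma~\ref{principal} this is a smooth principal $S^1$ bundle. Its closed disk bundle is $D(\pi)=P\times_{S^1}D^2$. The map $P\times[0,1]\to D(\pi)$, $(p,u)\mapsto[p,1-u]$, is a diffeomorphism from $P\times[0,1)$ onto the complement of the zero section. It collapses $P\times\{1\}$ along the fibers of $\pi$ onto the zero section. Hence $D(\pi)$ is canonically homeomorphic to the closed mapping cylinder of $\pi$, and this homeomorphism is a diffeomorphism away from the zero section.

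I would then restrict the map $G$ of the preceding proposition to $\chi(F_2)_{[0,1]}$. Its image is the closed portion $M(\Lambda_0)_{[0,1]}$ of the mapping cylinder, which is exactly this closed mapping cylinder after reparametrizing $[0,1]$. Precomposing with the identification above gives a continuous surjection
\[
\chi(F_2)_{[0,1]}\to D(\pi).
\]
By the proposition, this surjection is a diffeomorphism on $\chi(F_2)_{[0,1)}$, so in particular it is a diffeomorphism near the boundary $\chi(F_2)_0$. Over the zero section its fibers are the fibers of $\Lambda_1$, which are points or closed intervals by Proposition~\ref{fibers2}.

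The main obstacle is that this map is not injective: whenever both $\rho_\pm$ are non-central, the fiber of $\Lambda_1$ is an interval. So it is not yet a homeomorphism. I would fix this by identifying $\chi(F_2)_{[0,1]}$ with the closed mapping cylinder itself, using the cell-like map theorem. The map is a proper surjection between compact spaces, and every point-inverse is a point or an arc, hence cell-like. The target is a 6-manifold with boundary, and the source $\chi(F_2)_{[0,1]}$ is a compact 6-manifold with boundary by Theorem~\ref{thm2.2} and Corollary~\ref{ascor}.

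\emph{Approach 1.} Since the dimension is $6\ge5$, Siebenmann's approximation theorem says that a cell-like map between manifolds is a uniform limit of homeomorphisms. This should hold relative to the region where the map is already a homeomorphism, in particular near the boundary, which is where the interval fibers do not occur. That yields a homeomorphism $h$ agreeing with the given map on a collar such as $\chi(F_2)_{[0,0.5]}$.

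\emph{Approach 2 (more hands-on).} Alternatively I would try to shrink the interval fibers explicitly. The decomposition of $\chi(F_2)_1$ into arcs comes from the $U(1)\backslash SU(2)/U(1)$ double coset description. Near $\kappa=1$ the twist-flow circles degenerate to these arcs, and I would try to show the decomposition is shrinkable in the Bing sense. I expect this verification to be the delicate step, which is why I would appeal to the cell-like approximation theorem first.

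Finally, I would identify the resulting disk bundle with $E$. The Euler class of $\pi$ is computed from $H^*(\chi(F_2))$ in the same way as in the proof of Proposition~\ref{P=NPforreal}. Alternatively, the identification follows from the sequel by comparing Euler classes $(2,2)$ via the structure of $\chi(F_2)_{[-1,0]}$ given by Theorem~\ref{thm7}.
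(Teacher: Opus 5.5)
Your Approach~1 is exactly the paper's proof: identify the closed $D^2$ bundle with the mapping cylinder $M(\Lambda_0)$ via polar coordinates, observe that $G$ is a cell-like map whose only nondegenerate point-inverses are the interval fibers of $\Lambda_1$, and apply Siebenmann's Approximation Theorem~A with a control function such as $\epsilon(x)=\kappa(x)-0.5$ on $\chi(F_2)_{(0.5,1]}$, so that the resulting homeomorphism agrees with $G$ on the collar $\chi(F_2)_{[0,0.5]}$. Neither the shrinking argument of Approach~2 nor the identification with $E$ is needed for this corollary. The paper does the identification with $E$ afterwards via $e(\Lambda_0)=(n,n)$, $\pi_1(\chi(F_2)_0)\cong\ZZ/n$ and Theorem~\ref{thm7}, which is your second alternative; your first alternative presupposes knowing $H^*(\chi(F_2))$, which is not available at that point.
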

\begin{proof}

 The mapping cylinder of a principal $S^1$ bundle is easily seen, using polar coordinates, to be homeomorphic to  the associated $D^2$ bundle; in particular, $M(\Lambda_0)$ is a manifold.  Hence $G$ is a continuous, proper map from the manifold $\chi(F_2)_{(-1,1]}$ to the manifold  $M(\Lambda_0)$
 whose fibers are either singletons or closed intervals.  Any pair of compact semi-algebraic sets $(X,Y)$ 
 can be triangulated as a finite simplicial complex and subcomplex (loc. ~cit.), and hence each interval fiber of $G$ admits a contractible regular neighborhood.  In other words,  $G$ is a {\em CE-map}.    Siebenmann's theorem \cite[Approximation Theorem A]{siebenmann}, taking $\ep(x)=\kappa(x)-0.5$ on $\chi(F_2)_{(0.5,1]}$, implies that  there exists a homeomorphism $G'\colon \chi(F_2)_{[0,1]}\to M(\Lambda_0)$ which agrees with $G$ on $\chi(F_2)_{[0, 0.5]}$.
\end{proof}

   To complete the proof of  Theorem \ref{thm8}, it remains only to show that 
the principal $S^1$ bundle $\Lambda_0\colon \chi(F_2)_0\to S^2\times S^2$ has the same Euler class as the unit circle bundle  $S(E)=\partial E\to S^2\times S^2$. 

\medskip

There is an orientation preserving  involution of $F_2$ whose action on $\pi_1(F_2)$ is given by $r_\pm, s_\pm \mapsto r_\mp, s_\mp$. This involution  induces a fiber preserving involution of the principal $S^1$ bundle $\Lambda\colon \chi(F_2)_{(-1,1)}\to S^2\times S^2$
covering the involution $(\sigma_-,\sigma_+)\mapsto(\sigma_+,\sigma_-)$ of $S^2\times S^2$.   This shows that that the Euler class
$e(\Lambda_0)=(n,n)$ for some integer $n$. If needed, reverse the orientation of both  $S^2$ factors so that $n\ge 0$.

The classifying map $c\colon S^2\times S^2\to BS^1$ for $\Lambda_0$ induces  a long exact sequence of homotopy groups \cite{DaK}
$$ \dots \to \pi_2(\chi(F_2)_0)\to \pi_2(S^2\times S^2)\xrightarrow{c_*} \pi_2(BS^1)\to \pi_1(\chi(F_2)_0)\to\pi_1(S^2\times S^2)\to \dots$$
Then $\pi_1(S^2\times S^2)=0$, and $e(\Lambda_0)$ corresponds to $c_*$ under the natural isomorphism $H^2(S^2\times S^2)\cong\Hom(\pi_2(S^2\times S^2),\pi_2(BS^1))$. Therefore 
$\pi_1(\chi(F_2)_0)=\ZZ/g$, where $$g={\rm gcd}\left(e(\Lambda_0)\left[S^2\times\{\sigma_+\}\right], e(\Lambda_0)\left[\{\sigma_+\} \times S^2\right]\right)={\rm gcd}(n,n)=n.
$$ 
On the other hand, $\chi(F_2)_0$ is the boundary of the smooth compact manifold $\chi(F_2)_{[-1,0]}$, which is diffeomorphic to 
$ S^2\times \RR P^3$ by Theorem \ref{thm7}.   Therefore, $n= 2$ and so $S(E)\cong \Lambda_0$, as principal $S^1$ bundles.    This completes the proof of Theorem \ref{thm8}.\qed

 \section{Lagrangians from 3-manifolds with  genus two boundary}\label{lagrange}

Applications of Lagrangian-Floer theory in low dimensional topology often start by
 assigning a symplectic manifold $X(F)$ to a closed 2-manifold $F$ and a Lagrangian immersion $X(Y)\to X(F)$ 
to a 3-manifold $Y$ with boundary $F$. The goal is to construct a  functor $X\colon {\rm Bord}_{2+1}\to {\rm Wein}$ from the bordism category of 2- and 3-manifolds to  the Weinstein  \cite{wein2} $A_\infty$ pre-category (composition  and higher operations are only defined for transverse morphisms, see e.g. \cite[Section 3.4.4]{boklandt})  whose objects are symplectic manifolds and the set of morphisms ${\rm Mor}(M_1,M_2)$ is the set of Lagrangian immersions $L\to M_1^-\times M_2$.   A good overview of this perspective
can be found in Wehrheim's article \cite{W-FFT}.  An explanation of our approach is given in \cite[Sections 10 and 11]{CHKK}.

  Important work  from which this strategy emerged   include Casson's construction of an invariant for homology 3-spheres \cite{Akbulut-McCarthy}, Taubes's gauge-theoretic reformulation of  Casson's invariant \cite{Taubes},   Floer's introduction of Lagrangian Floer homology and instanton homology \cite{floer, Floer}, the Atiyah-Floer conjecture \cite{Atiyah1}, Kronheimer-Mrowka's singular instanton homology \cite{KM1,KM10}, and Oszv\'ath-Szab\'o's Heegard-Floer theory \cite{OS}.

\medskip

In this section we provide some examples of Lagrangian immersions in  $\chi(F_2)$ given by the $SU(2)$ character varieties of 3-manifolds with 
genus two boundary. We work in a general context of {\em webs}, as developed by Kronheimer-Mrowka in \cite{KM10}, which allows our 3-manifolds to contain a link or trivalent graph, and which also allows for non-trivial $SO(3)$ bundles rather than simply the trivial $SU(2)$ bundle. See \cite{CHK}   for a proof (and careful statement) of the folklore assertion that the restriction 
$\chi(Y,T)\to \chi(\partial Y, \partial T)$ is a Lagrangian immersion with respect to the Atiyah-Bott-Goldman form, for $(Y,T)$ a tangle in a compact 3-manifold.

\subsubsection{Notation} To streamline notation, the conjugacy class of $\rho\in\Hom(\pi_1(F_2),SU(2))$ given by
$$
\rho(r_-)=R_-, ~\rho(s_-)=S_-, ~\rho(r_+)=R_+, ~\rho(s_+)=S_+
$$
 (where $R_-,S_-,R_+,S_+\in SU(2)$ satisfy $[R_-,S_-][R_+,S_+]=1$) will be simply denoted by 
 $$[\rho]=\left[\left(R_-,S_-,R_+,S_+\right)\right].$$  Also, given any 3-manifold $Y$ whose boundary contains $F_2$, the inclusion $F_2\subset Y$ induces a restriction map which we denote  $i\colon \chi(Y) \to \chi (F_2)$.

\subsection{The genus two handlebody}
The genus two handlebody $H$ has fundamental group free of rank 2. Its character variety
$\chi(H)$ is therefore a copy of the Goldman pillow $B$, and the abelian locus $\chi(H)^\ab $ is its boundary pillowcase.  
With its natural boundary marking, the homomorphism $\pi_1(F_2)\to \pi_1(H)$ is
given by the surjection $$\langle r_-, s_-,r_+,s_+\mid   [r_-,s_-][r_+,s_+]=1\rangle\to   \langle  s_-  , r_+\rangle  .$$
This implies that the image of the embedding 
$i\colon  \chi(H)\hookrightarrow \chi(F_2)$ lies in $\chi(F_2)_1$.

Define $$ \beta_1,\beta_2\colon [-1,1] \to B, ~ \beta_1(x)=(1,x,-x), ~\beta_2(y)=(y,1,-y)  $$
and $$  p\colon B\to [-1,1]^2,~ p(x,y,z)=(x,y).$$
\begin{proposition}\label{HB}
The diagram
\[
\begin{tikzcd}
B \arrow[d,"p"]&\arrow[l,"W"',"\cong"]\chi(H) \arrow[r, hook, "i"]& \chi(F_2)_1\arrow[d,"\phi"]\\
\text{$[-1,1]^2$} \arrow[r,hook," \beta_1 \times\beta_2  "] & \partial B\times \partial B&\chi(F_-)_1\times_\kappa  \chi(F_+)_1\arrow[l,"W \times W"',"\cong"] 
\end{tikzcd}
\]
commutes. The interval fibers of $p$ are  mapped by $i\circ W^{-1}$ onto  (interval and point) fibers of $\phi$,  homeomorphically in the case of interval fibers of $\phi$.  The image of $i$ equals
$$
\{  \left[\left(1,S_-, R_+,1\right)\right] \mid S_-,R_+\in SU(2)\}.  
$$

\end{proposition}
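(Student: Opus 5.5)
The map $\pi_1(F_2)\to\pi_1(H)=\langle s_-,r_+\rangle$ kills $r_-$ and $s_+$ and sends $s_-,r_+$ to the free generators. So a representation $\rho_H\colon\langle s_-,r_+\rangle\to SU(2)$ with $\rho_H(s_-)=S_-$, $\rho_H(r_+)=R_+$ pulls back to $[(1,S_-,R_+,1)]$. This gives the description of the image of $i$ immediately. Since $\rho(r_-)=1$, we get $\rho([r_-,s_-])=1$, so the image lies in $\chi(F_2)_1$.

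Next I would check commutativity by computing both sides on $[\rho_H]$. Write $W(\rho_H)=(\Real S_-,\Real R_+,\Real(S_-\overline{R_+}))=(x,y,z)$, so $p\circ W(\rho_H)=(x,y)$. On the other side, $\phi_-(i\rho_H)$ is the restriction $r_-\mapsto 1$, $s_-\mapsto S_-$. By (\ref{G}) its $W$-image is $(\Real 1,\Real S_-,\Real(\overline{S_-}))=(1,x,x)$. Similarly $\phi_+(i\rho_H)$ is $r_+\mapsto R_+$, $s_+\mapsto 1$, with $W$-image $(y,1,y)$. These should agree with $\beta_1(x),\beta_2(y)$ up to the sign conventions for the third coordinate. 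If the paper's $\beta_i$ carry $-x$ rather than $x$, I will recheck the generator convention (for example $C=[s_+,r_+]$, or whether the twist uses $r\bar s$ versus $rs$) and adjust. Either way the composite factors through $(x,y)$, and that is the substantive point. One should also note $\beta_1(x),\beta_2(y)\in\partial B$, since $x^2+1+x^2-2x^2=1$.

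For the fiber statement, fix $(x,y)$. The fiber $p^{-1}(x,y)\cap B$ is the set of $z$ with $1-x^2-y^2-z^2+2xyz\ge0$, which is the interval $[xy-\sqrt{(1-x^2)(1-y^2)},\,xy+\sqrt{(1-x^2)(1-y^2)}]$. It is a point exactly when $x=\pm1$ or $y=\pm1$, that is, when $S_-$ or $R_+$ is central. This matches Proposition \ref{fibers2}(3): the fiber of $\phi$ over $(\rho_-,\rho_+)$ is a point when one of $\rho_\pm$ is central, and an interval otherwise. Here $\rho_-$ is central iff $S_-=\pm1$ iff $x=\pm1$, and likewise $\rho_+$ is central iff $y=\pm1$. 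Commutativity shows $i\circ W^{-1}$ maps $p^{-1}(x,y)$ into $\phi^{-1}(\beta_1(x),\beta_2(y))$.

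The main obstacle is showing this map is onto the fiber and a homeomorphism in the interval case. My plan is to compute the $\phi$-fiber directly. Conjugate so that $S_-=e^{a\bbi}$ with $a\in(0,\pi)$. Any $\rho$ in the fiber has the form $(1,S_-,gR_+'g^{-1},1)$, where $R_+'=e^{b\bbi}$ is fixed and $g$ ranges over $U(1)\backslash SU(2)/U(1)$. This is the double-coset description used in the proof of Proposition \ref{fibers2}, and it is parametrized by $\Real(S_-\overline{R_+})=z$. So every point of the $\phi$-fiber lies in the image of $i$, and the map $z\mapsto[(1,S_-,R_+,1)]$ is a continuous bijection from the compact interval onto the Hausdorff fiber, hence a homeomorphism. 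Injectivity follows because $i\circ W^{-1}$ is an embedding (it is injective since $W$ is, and $\chi(H)\to\chi(F_2)$ is injective by surjectivity of $\pi_1(F_2)\to\pi_1(H)$).
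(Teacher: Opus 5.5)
Your proof is correct. The commutativity computation and the description of the image of $i$ are the same direct evaluation the paper uses. Your third coordinates $(1,x,x)$ and $(y,1,y)$ are the right ones, so drop the hedge about generator conventions. The paper's $\beta_1(x)=(1,x,-x)$ and $\beta_2(y)=(y,1,-y)$ contain a sign slip: plugging $(1,x,-x)$ into the defining inequality of $B$ gives $1+4x^2\le 1$, so that point is not even in $B$ unless $x=0$.

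For the fiber statement you take a genuinely different route. The paper first shows that $i\circ W^{-1}$ embeds each $p$-fiber into a $\phi$-fiber. It then gets surjectivity by matching endpoints: points of $\partial B$ correspond to abelian representations, and by Proposition \ref{fibers2} the interval fibers of $\phi$ have abelian endpoints and non-abelian interior. You instead observe that any $\rho$ in the $\phi$-fiber over $\phi(i[\rho_H])$ can be conjugated so that $\rho(r_-)=\rho(s_+)=1$. Such a $\rho$ already lies in the image of $i$, with the same $(x,y)$. Injectivity comes from $i\circ W^{-1}$, and a continuous bijection from a compact space to a Hausdorff space is a homeomorphism.

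Your argument is more self-contained. It does not need the abelian-endpoint correspondence or the double-coset identification of the fiber. Your remark that $z$ parametrizes $U(1)\backslash SU(2)/U(1)$ is true but superfluous. Your explicit formula for $p^{-1}(x,y)$ also settles the point-fiber case uniformly: a point exactly when $x=\pm1$ or $y=\pm1$. The paper's proof leaves that case implicit.
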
 
\begin{proof}  
  In the diagram above, since $\pi_1(H)$ is generated by $s_-, r_+$, the map $W\colon \chi(H) \to B$ sends the conjugacy class of the  representation
$$s_-\mapsto S_-, 
 r_+\mapsto R_+$$
to $(x,y,z)=\left(\Real(S_-), \Real(R_+), \Real\left(S_- \overline{R_+}\right)\right) \in B$.  The map $\phi\circ i$ takes this conjugacy class to 
 $$\left([r_-\mapsto 1, s_- \mapsto S_-],[r_+\mapsto R_+, s_+\mapsto 1]\right)$$ 
 in  $\chi(F_-)\times_\kappa\chi(F_+)_1$, which is then sent by $W\times W$ to $((1,x,-x), (y,1,-y))$.

Since $i$ and $\beta_1\times \beta_2$ are embeddings and $W$ and  $W\times W$ are homeomorphisms, the fibers of $p$ are embedded by $i\circ W^{-1}$ into the fibers of $\phi$.  The endpoints of the interval fibers of $p$ lie in $\partial B$ and hence are sent by $W^{-1}$ to abelian representations;  the interior points are sent to non-abelian representations.  The interval fibers of $\phi$ have abelian endpoints and non-abelian interiors, and therefore $i\circ W^{-1}$ maps the  fibers of $p$ homeomorphically to the fibers of $\phi$. 
 \end{proof}

 \subsection{Holonomy perturbations and Hamiltonian isotopies of $\chi(F_2)$}\label{holp}
 Given a framed embedded circle $P$ in a 3-manifold $Y$ and a parameter $\ep>0$, a variant of $\chi(Y)$, called the {\em $(P,\ep)$-holonomy perturbed character variety} and denoted  $\chi(Y)_{(P,\ep)}$
 can be defined. This is  the character variety description of the perturbed-flat moduli space resulting from the holonomy perturbations used in instanton gauge theory to  perturb the Chern-Simons function,   introduced by Taubes \cite{Taubes}.

For any oriented surface $F$, it is proven in \cite[Theorem 6.3]{HK2} that  the Lagrangian correspondence induced
by   a holonomy perturbation in a cylinder $F\times I$ along an embedded curve $P\times \left\{\tfrac12 \right\}$ is  equal to graph of   the time-$\ep$ {\em Hamiltonian} flow $\mathcal{F}_{\ep,P}$ on $\chi^*(F)$ given by the (unmodified) Goldman flow associated to the character of $P$. 
An explicit formula for calculating $\mathcal{F}_{(P,\ep)}$ is given in  Proposition 6.2 of \cite{HK2}.  
 
\subsection{A Lagrangian correspondence on multicurves in $(S^2,4)$}
In order to produce   Lagrangian immersions into $\chi(F_2)$, we start by computing a Lagrangian correspondence induced by the pair $(H_0, T)$, where $H_0$ is the 3-manifold obtained by removing an interior open 3-ball from the 2-handlebody $H$, and $T$ is a 2-stranded tangle with its four endpoints on the 2-sphere component of the boundary, illustrated in Figure \ref{fig00fig}. Four meridian loops $a,b,c,d$ are indicated.
\begin{figure}[ht!]
\labellist
\pinlabel $b$ at 225 475
\pinlabel $a$ at 345 475
\pinlabel $c$  at 222 345
\pinlabel $d$  at 330 348
\pinlabel $T$  at 410 500
\endlabellist
\centering
\includegraphics[width=3.7in, height=2.3in]{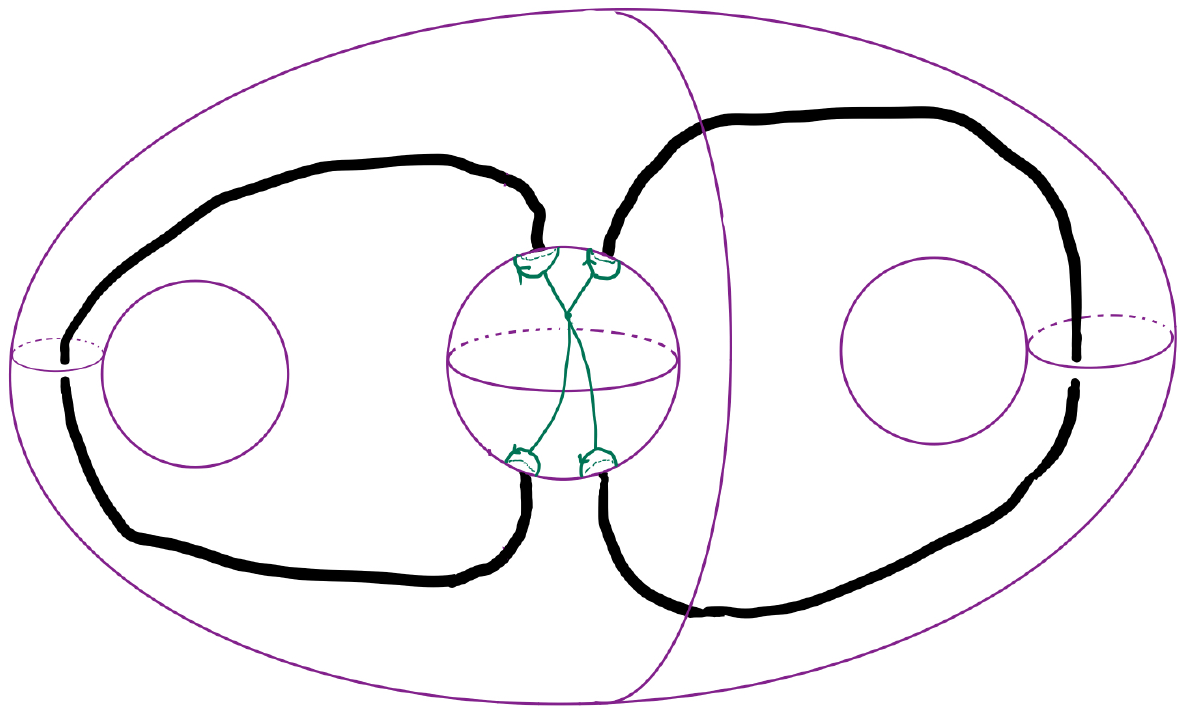}
 \caption{\label{fig00fig} A tangle cobordism from $(S^2,4)$ to $F_2$}
\end{figure}
The character variety of the boundary is $\chi(F_2)\times \chi(S^2,4)$, and $(H_0, T)$
determines a restriction map 
\begin{equation}\label{univ}
 i=(i_1,i_2)  \colon \chi(H_0,T)\to \chi(F_2)\times \chi(S^2,4).   
\end{equation}

To view this map as a Lagrangian correspondence, we begin by identifying
$\chi(H_0,T)$.   Note that $H_0\setminus T$ deformation retracts to $F_2$. 
It follows that the restriction $i_1\colon \chi(H_0,T)\to \chi(F_2)$ 
is an embedding with image  
$$i_1\colon \chi(H_0,T) \cong \{ \Real(\rho(r_-))=0,  \Real(\rho(s_+))=0\}\subset \chi(F_2).$$

From Figures \ref{fig0fig} and \ref{fig00fig}, one easily calculates that
\begin{equation}\label{rels0}  r_-=\bar b, ~ s_- b \bar s_-= c, ~s_+=a,~ \text{ and } r_+ a \bar r_+=d.\end{equation}

 Let $G=\langle \sigma_0, \sigma_1, \sigma_2 \mid \sigma_0 ^2, [\sigma_1,\sigma_2], \sigma_1 \sigma _0 \sigma_1 \sigma_0, \sigma_2 \sigma_0\sigma _2 \sigma_0 \rangle$ (the semi-direct product of $\ZZ^2$ and $\ZZ/2$) act  on $\RR^2$ by $$\sigma_0(\gamma, \theta) = (-\gamma, -\theta)
,~ \sigma_1(\gamma, \theta)=(\gamma+2\pi, \theta),~\sigma_2(\gamma,\theta)=(\gamma, \theta+2\pi) .$$   
It is explained in   \cite{HHK}  that $\chi(S^2,4)$
is a pillowcase, homeomorpic to the image of the quotient map $\xi\colon\RR^2 \to \RR^2/ G \cong \chi(S^2,4)$,    
$$\xi\colon\TT^2\to \chi(S^2,4), ~ \xi(\gamma,\theta)=  [a\mapsto \bbi, ~b\mapsto e^{\gamma\bbk}\bbi
, ~c\mapsto e^{\theta\bbk}\bbi, ~d\mapsto e^{(\theta-\gamma)\bbk}\bbi].
$$
Extend this $G$ action on $\RR^2$ to an action on $\RR^2 \times \TT^2$ by 
  $$\sigma_0 (\gamma, \theta, \alpha, \beta) = (-\gamma, -\theta, \alpha+\theta, \beta+\theta),~ \sigma_1(\gamma,\theta,\alpha,\beta)=(\gamma+2\pi, \theta, \alpha+\pi, \beta+\pi),~ \sigma_2(\gamma, \theta, \alpha, \beta)=(\gamma, \theta+2\pi, \alpha, \beta).$$

Define 
\begin{equation}\label{ell1} L\colon \RR^2\times\TT^2\to \chi(F_2),~ L(\gamma, \theta ,  \alpha , \beta )=
\left[\left(-e^{\gamma\bbk}\bbi, 
e^{ \tfrac{\theta-\gamma}{ 2} \bbk}e^{\left(\tfrac{  \theta}{ 2}+\alpha\right) e^{\gamma\bbk}\bbi}
,
e^{ \tfrac{\theta-\gamma} {2} \bbk}e^{\left(\tfrac{  \theta}{ 2} +\beta\right)\bbi}
, \bbi\right)\right].
\end{equation}
We leave to the reader the verifications that  $L$ is $G$ invariant and  $$L(\gamma, \theta ,  \alpha , \beta )([r_-,s_-])=e^{(\gamma-\theta)\bbk}=
L(\gamma, \theta ,  \alpha , \beta )\left(\left[r_+,s_+\right]^{-1}\right).$$

Set 
$$(\RR^2)^*=\RR^2\setminus (\pi\ZZ)^2, ~(\RR^2)^\dag=\RR^2\setminus (\{ (0,\pi), (\pi,0)\}+(2\pi\ZZ)^2),$$
$$ \chi^\dag(S^2,4)= \chi(S^2,4)\setminus\{\xi(0,\pi),\xi(\pi,0)\} \text{ and } \chi^\dag(H_0,T)=i_2^{-1}(\chi^\dag(S^2,4)).$$
Then 
$$
\xi^{-1}(\chi^*(S^2))=(\RR^2)^*
\text{ and } \xi^{-1}(\chi^\dag(S^2))=(\RR^2)^\dag.$$
Recall that $\chi(S^4)$ is homeomorphic to a 2-sphere, so that $\chi^\dag(S^2,4)$ is homeomorphic to an annulus.  The circle $\xi(\tfrac\pi 2,\theta),~\theta\in [0,2\pi]$ is a deformation retract of $\chi^\dag(S^2,4)$.

The map $L$ factors through $(\RR^2\times \TT^2)/G$. In the first part of the following theorem, we prove that the image of $L$ coincides with the  image of the embedding $i_1$.  The rest of the statement refers to the  procrustean commutative diagram into which we insert $\circ= *,~ \dag$, or $ \emptyset$:
\begin{equation}\label{pocus}
\begin{tikzcd}
(\RR^2)^\circ\times \TT^2\arrow[d]\arrow[rd,bend left=25, "L"']&&&& \\
((\RR^2)^\circ \times \TT^2)/G\arrow[r,"L_G"]\arrow[d, "{\rm Proj}"]&{i_1(\chi^\circ(H_0,T))}&&&\chi^\circ(H_0,T)\arrow[d, "i_2"]\arrow[lll, "\cong","i_1"']\\
(\RR^2)^\circ/G\arrow[rrrr, "\xi","\cong" ']&&&&\chi^\circ(S^2,4).
\end{tikzcd}
\end{equation}

\begin{theorem} \label{wish} \hfill
\begin{enumerate}
\item The embedding $i_1\colon\chi(H_0,T)\hookrightarrow \chi(F_2)$ and the map $L$ have the same image.  

\item The restriction 
$ i_2: \chi^\dag(H_0,T) \to \chi^\dag(S^2 , 4)$ is a trivial $\TT^2$ bundle over the annulus.   
\item The restriction 
$ i_2\colon  \chi^*(H_0,T) \to \chi^*(S^2 , 4)$ is smooth.  
\item $L_G$ embeds $(\RR^2)^\dag\times \TT^2/G$ with image $ i_1 \left(  \chi^\dag(H_0,T)\right) $. 
\item The fiber of $i_2$ over each of the points $\xi(\pi,0)$ and $\xi(0,\pi)$ is a circle, and the fiber of $  i_1^{-1}\circ L_G $ over any point in each of these circles is again a circle. Explicitly,
$L(0,\pi,\alpha,\beta)=L(0,\pi,\alpha+\mu,\beta+\mu)$ and $L(\pi,0,\alpha,\beta)=L(\pi,0,\alpha+\mu,\beta-\mu)$ for any $\mu$.  

\item $ \chi^\anc(H_0,T) $ is the union of the two torus fibers of $i_2$ over $\xi(0,0)$ and $\xi(\pi,\pi)$ and $ \chi^\cen(H_0,T) $ is empty.
\end{enumerate}
\end{theorem}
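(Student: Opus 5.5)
Everything reduces to explicit quaternion algebra carried out in the image of $i_1$, namely
$$\{[(R_-,S_-,R_+,S_+)] \mid \Real(R_-)=0=\Real(S_+)\}\subset\chi(F_2).$$
Under the relations (\ref{rels0}), $i_2$ sends $[(R_-,S_-,R_+,S_+)]$ to $[a\mapsto S_+,\ b\mapsto \overline{R_-},\ c\mapsto S_-\overline{R_-}\,\overline{S_-},\ d\mapsto R_+S_+\overline{R_+}]$.

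\textbf{Part (1).} Conjugate so that $S_+=\bbi$, and then by an element of $\{e^{t\bbi}\}$ so that $R_-=-e^{\gamma\bbk}\bbi$, which is possible for any traceless $R_-$. The remaining conditions, that $c$ and $d$ are traceless, are automatic. The surface relation $[R_-,S_-][R_+,S_+]=1$ says
$$S_-\overline{R_-}\,\overline{S_-}\cdot R_+\bbi\overline{R_+}=R_-\bbi,$$
that is, $c\cdot d=\overline{b}\,\overline{a}$, the pillowcase relation.

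Now use the normal form of $\chi(S^2,4)$ from \cite{HHK}. Since $a=\bbi$ and $b=e^{\gamma\bbk}\bbi$, after a further conjugation by the stabilizer one may take $c=e^{\theta\bbk}\bbi$ and $d=e^{(\theta-\gamma)\bbk}\bbi$. The solutions $S_-$ of $S_-(e^{\gamma\bbk}\bbi)\overline{S_-}=e^{\theta\bbk}\bbi$ form a coset, namely a particular solution $e^{\frac{\theta-\gamma}2\bbk}$ times a one-parameter family in the centralizer. A short computation should show this coset is exactly $e^{\frac{\theta-\gamma}2\bbk}e^{(\frac\theta2+\alpha)e^{\gamma\bbk}\bbi}$. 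The analogous argument gives $R_+=e^{\frac{\theta-\gamma}2\bbk}e^{(\frac\theta2+\beta)\bbi}$. This shows $\mathrm{Im}(i_1)\subset\mathrm{Im}(L)$, and the reverse inclusion is the direct verification, already noted, that $L$ satisfies the relation and the tracelessness conditions. I would also verify the $G$-invariance here: $\sigma_0$ is realized by conjugation by $\bbi$, $\sigma_1$ and $\sigma_2$ by the sign changes $\alpha,\beta\mapsto \alpha+\pi,\beta+\pi$ absorbed by the half-angles, and so on.

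\textbf{Parts (4), (5), (6).} Compute the fibers of $L_G$ over a point of $(\RR^2)^\circ/G$. Given $(\gamma,\theta)$, the residual freedom is conjugation by the common stabilizer of $a,b,c,d$ in $\chi(S^2,4)$. For $(\gamma,\theta)\notin(\pi\ZZ)^2$ this stabilizer is $\{\pm1\}$, so $(\alpha,\beta)$ is determined; this gives injectivity and hence (4) on the $*$ part. At $(\pi,\pi)$ and $(0,0)$, $b=\pm a$ and $c=\pm a$, so the stabilizer is $U(1)$. Here the images of $S_-$ and $R_+$ lie in a common circle and the representation is abelian, which gives (6); the tori persist because the $\alpha,\beta$ twists act nontrivially on $F_2$-representations. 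At $(0,\pi)$ and $(\pi,0)$ the $U(1)$ stabilizer $e^{\mu\bbi}$ acts on $(\alpha,\beta)$ diagonally, as $(\alpha+\mu,\beta\pm\mu)$ with sign $\pm$ according to whether $e^{\gamma\bbk}\bbi=\pm\bbi$. This is checked by conjugating the formula (\ref{ell1}) by $e^{\mu\bbi}$, and it gives the circle fibers in (5). The sign difference is precisely why these two points are excluded in $\dag$ while $(0,0)$ and $(\pi,\pi)$ are not: there the representation becomes abelian, but the parametrization remains injective modulo $G$.

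\textbf{Parts (2) and (3).} For (3), $i_2$ is a polynomial map between smooth loci, namely $\chi^*(H_0,T)$ (via the embedding $i_1$ into $\chi^*(F_2)$) and $\chi^*(S^2,4)$, so it is smooth. For (2), over $(\RR^2)^\dag/G$ the map $\mathrm{Proj}$ is a $\TT^2$ bundle with structure given by the $G$ action on the fiber coordinates. Triviality follows by exhibiting a global section, or more simply by observing that the monodromy around the core circle $\xi(\tfrac\pi2,\theta)$ is generated by $\sigma_2$, which acts trivially on $(\alpha,\beta)$. The shift by $\theta$ appearing in $\sigma_0$ is a translation isotopic to the identity, so it does not affect triviality.

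\textbf{Main obstacle.} The hardest step will be the injectivity and homeomorphism claim in (4) near the abelian tori over $\xi(0,0)$ and $\xi(\pi,\pi)$. There the orbit type jumps, and I must check that $L_G$ remains injective and proper. My first approach is a direct computation of the conjugation freedom together with compactness to pass from a continuous bijection to an embedding.
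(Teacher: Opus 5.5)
Your treatment of (1) and of the fibers in (4)--(6) follows the paper's route. You use the same normal form $S_+=\bbi$, $R_-=-e^{\gamma\bbk}\bbi$, $c=e^{\theta\bbk}\bbi$, $d=e^{(\theta-\gamma)\bbk}\bbi$, the same coset description of $S_-$ and $R_+$, and the same analysis of conjugation by $e^{\nu\bbi}$. Your observation that $a,b,c,d$ are literally independent of $(\alpha,\beta)$, so the only residual freedom is their common stabilizer, is a tidy repackaging of the paper's computation. For (3) you argue directly that $i_2$ is induced by a polynomial map landing in the transversely cut out $\chi^*(S^2,4)$. The paper instead deduces it from $L_G$ being a diffeomorphism over the free locus $(\RR^2)^*$, which also gives the smooth $\TT^2$-bundle structure. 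One small slip: the surface relation gives $S_-\overline{R_-}\,\overline{S_-}\cdot R_+\bbi\overline{R_+}=\overline{R_-}\bbi=-R_-\bbi$, although your restatement $cd=\overline{b}\,\overline{a}$ is correct.

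The gap is in (2). The group $G$ does not act freely on $(\RR^2)^\dag$: the classes of $(0,0)$ and $(\pi,\pi)$ have isotropy $\langle\sigma_0\rangle$ and $\langle\sigma_1\sigma_2\sigma_0\rangle$. So it is not automatic that ${\rm Proj}\colon ((\RR^2)^\dag\times\TT^2)/G\to(\RR^2)^\dag/G$ is a locally trivial $\TT^2$ bundle. Your triviality argument (monodromy $\sigma_2$ around $\xi(\tfrac\pi2,\theta)$, plus the deformation retraction of the annulus) presupposes exactly this. Your stated reason, that the $\theta$-shift in $\sigma_0$ is a translation isotopic to the identity, cannot be what makes it work. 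At $(0,\pi)$ the isotropy element $\sigma_2\sigma_0$ also acts on the fiber by a translation isotopic to the identity, namely by $(\pi,\pi)$. There the quotient has a multiple fiber, as in a Seifert fibration, and is not locally trivial.

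What must be checked, and what the paper checks, is that the isotropy acts \emph{trivially} on the fibers over the two fixed points. Indeed $\sigma_0(0,0,\alpha,\beta)=(0,0,\alpha,\beta)$, and $\sigma_1\sigma_2\sigma_0(\pi,\pi,\alpha,\beta)=(\pi,\pi,\alpha+2\pi,\beta+2\pi)=(\pi,\pi,\alpha,\beta)$ in $\RR^2\times\TT^2$. Local triviality then follows, for example by passing to the fiber coordinates $\alpha+\tfrac\theta2,\ \beta+\tfrac\theta2$ that appear in (\ref{ell1}). In these coordinates $\sigma_0$ acts trivially on $\TT^2$ and $\sigma_1,\sigma_2$ act by translation by $(\pi,\pi)$. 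Hence every isotropy group over $(\RR^2)^\dag$ acts trivially on the torus, and the quotient is locally a product.

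You essentially already have this fact: injectivity of $L$ on the abelian tori, together with $G$-invariance of $L$, forces the isotropy to act trivially there. But it has to be fed into the bundle claim, not only into injectivity of $L_G$. This, rather than injectivity or properness of $L_G$, is the real issue at $\xi(0,0)$ and $\xi(\pi,\pi)$.

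For (6) you should also state the converse. Over every other point the representation is non-abelian, including the circles over $\xi(0,\pi)$ and $\xi(\pi,0)$, where $S_-=\pm\bbk e^{t\bbi}$ does not commute with $S_+=\bbi$. Finally, $\chi^\cen(H_0,T)$ is empty because $R_-$ is traceless.
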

 
\begin{proof}
\noindent{\em (a) The maps $i_1$ and $L$ have the same image.} If $\rho$ lies in the image of  $i_1\colon \chi(H_0,T)\hookrightarrow \chi(F_2)$,
set $\gamma=\arccos(\Real(\rho(r_-s_+)))$. Then $\rho$
can be conjugated so that $\rho=\left[\left(-e^{\gamma\bbk}\bbi, 
S_-,R_+, \bbi\right)\right]$.
This implies $\rho(b)=\rho(\overline{ r_-})=e^{\gamma\bbk}\bbi$ and $\rho(a)=\rho(s_+)=\bbi$.
Equation \ref{rels0} shows that  $S_- e^{\gamma\bbk}\bbi\overline{S_-}=\rho(c)$ and $R_+\bbi\overline{R_+}=\rho(d)$.  The  relation $ba=cd$ implies that, perhaps after further conjugation by $e^{\nu\bbi}$ if needed when $\sin\gamma=0$,   there exists a $ \theta\in   \RR  $ satisfying $\rho(c)=e^{\theta\bbk}\bbi$ and $\rho(d)=e^{(\theta-\gamma)\bbk}\bbi$. 

From (\ref{rels0}) we see that $$S_- e^{\gamma\bbk}\bbi \overline{S_-}=e^{\theta\bbk}\bbi \text{ and }R_+  \bbi \overline{R_+}=e^{(\theta-\gamma)\bbk}\bbi,$$
which can be rewritten   as
 $$  \left( e^{-\tfrac \theta 2\bbk}S_- e^{\tfrac \gamma 2 \bbk}\right) \bbi = \bbi \left( e^{-\tfrac \theta 2\bbk}S_- e^{\tfrac \gamma 2 \bbk}\right) \text{ and }
 \left( e^{-\tfrac{ \theta-\gamma}{2}\bbk}R_+\right )  \bbi =\bbi \left( e^{-\tfrac{ \theta-\gamma}{2}\bbk}R_+\right ) .$$ 
The first equation implies that $e^{-\tfrac{  \theta} {2}\bbk}S_- e^{\tfrac \gamma 2 \bbk}=e^{x\bbi}$ for some $x$; the second that  $e^{-\tfrac{ \theta-\gamma}{2}\bbk}R_+=e^{y\bbi}$ for some $y$.
Therefore
$$S_-=e^{\tfrac{  \theta} {2}\bbk}e^{x\bbi}e^{-\tfrac \gamma 2 \bbk}
=e^{\tfrac{  \theta} {2}\bbk}e^{-\tfrac \gamma 2 \bbk}\left(e^{\tfrac \gamma 2 \bbk}
e^{x\bbi}e^{-\tfrac \gamma 2 \bbk}\right)=e^{-\tfrac \gamma 2 \bbk}e^{\tfrac{  \theta}{2}\bbk}e^{xe^{\gamma\bbk}\bbi},
\text{ and similarly }  R_+=e^{\tfrac{ \theta-\gamma}{2}\bbk}e^{y\bbi}.$$ The substitutions
$\alpha=x-\tfrac{ \theta}{2},~ \beta=y-\tfrac{ \theta}{2}$ then show
$\rho=L(\gamma, \theta,\alpha,\beta)$.  Hence the image of $L$ contains 
the image of $i_1$.  
The reverse inclusion follows immediately from $\Real \left(L(\gamma, \theta,\alpha,\beta)(r_-)\right)=0$ and $\Real \left(L(\gamma, \theta,\alpha,\beta)(s_+)\right)=0$.

\medskip

\noindent{\em (b) The fibers of $i_2$.}    Fix  $(\gamma,\theta)\in \RR^2$.  
Suppose that $\rho\in \chi(H_0,T)$ satisfies $i_2(\rho)=\xi(\gamma,\theta)$.
From the previous paragraph we can write $i_1(\rho)=L(\gamma',\theta', \alpha,\beta)$.
Then $\xi(\gamma,\theta)=\xi(\gamma',\theta')$, which means there is a sign $\ep\in \{\pm 1\}$ and $\ell,n\in \ZZ$ so that $(\gamma',\theta')=\ep(\gamma,\theta) +2\pi(\ell,n)$.
Hence $i_1(\rho)=L(\ep \gamma+2\pi\ell, \ep\theta+2\pi n, \alpha,\beta)$, which equals   $L(\gamma,\theta,\alpha+\pi \ell, \beta+\pi \ell)$ if $ \ep=1$ and  $L(\gamma,\theta,\alpha+\pi \ell - \theta, \beta+\pi \ell - \theta)$ if $\ep=-1,$
  using the $G$ invariance of $L$.  It follows that  the fiber of $i_2$ over $\xi(\gamma,\theta)$ equals  $L(\{(\gamma,\theta)\}\times \TT^2)$.

\medskip

Suppose that $L(\gamma,\theta,\alpha,\beta)=L(\gamma,\theta,\alpha',\beta')$.
Then there exists a $0\leq \nu<\pi$ so that
$$
\left(-e^{\gamma\bbk}\bbi, 
e^{ \tfrac{\theta-\gamma}{ 2} \bbk}e^{\left(\tfrac{  \theta}{ 2}+\alpha \right) e^{\gamma\bbk}\bbi}
,
e^{ \tfrac{\theta-\gamma} {2} \bbk}e^{\left(\tfrac{  \theta}{ 2} +\beta\right)\bbi}
, \bbi\right)
=
e^{\nu\bbi}\left(-e^{\gamma\bbk}\bbi, 
e^{ \tfrac{\theta-\gamma}{ 2} \bbk}e^{\left(\tfrac{  \theta}{ 2}+\alpha'\right) e^{\gamma\bbk}\bbi}
,
e^{ \tfrac{\theta-\gamma} {2} \bbk}e^{\left(\tfrac{  \theta}{ 2} +\beta'\right)\bbi}
, \bbi\right)e^{-\nu\bbi}.
$$
If $\sin\gamma\ne 0$, then $\nu=0$,  from consideration of the first component. 
 Therefore, $(\alpha,\beta)=(\alpha',\beta')$, and so $L$ embeds $\{(\gamma,\theta)\}\times \TT^2$   onto the fiber of $i_2$ over $\xi(\gamma,\theta)$ if $\sin\gamma\ne 0$.

It remains to consider the case when $\gamma=\ell\pi$
with $\ell\in \ZZ$.  
Then 
$L(\ell\pi,\theta, \alpha,\beta)
   =L(\ell\pi,\theta, \alpha',\beta') $ if and only if 
\begin{equation}\label{fpaint}
\left( e^{\tfrac{  \theta-\ell \pi}{ 2} \bbk}
e^{(-1)^\ell\left( \tfrac{  \theta}{ 2}+\alpha\right)  \bbi} ,
 e^{\tfrac{  \theta-\ell \pi}{ 2} \bbk}
e^{\left(\tfrac{  \theta}{ 2} +\beta\right) \bbi} \right)
=e^{\nu\bbi}
\left(   e^{\tfrac{  \theta-\ell \pi}{ 2} \bbk}
e^{(-1)^\ell\left( \tfrac{  \theta}{ 2}+\alpha'\right)  \bbi} ,
 e^{\tfrac{  \theta-\ell \pi}{ 2} \bbk}
e^{\left(\tfrac{  \theta}{ 2} +\beta'\right) \bbi} \right)e^{-\nu\bbi}
 \end{equation}
 for some $\nu\in[0,\pi)$.  After some rearrangement, (\ref{fpaint}) is equivalent to 
\begin{equation}\label{fine artiste}
 e^{-\tfrac{\theta-\ell \pi}{2} \bbk} e^{\nu \bbi} e^{\tfrac{\theta-\ell \pi}{2} \bbk} = e^{((-1)^{\ell}(\alpha - \alpha') +\nu) \bbi} = e^{(\beta-\beta' +\nu) \bbi },\end{equation} so, assuming $(\alpha,\beta)\neq (\alpha',\beta')$, it must be the case that $e^{\tfrac{\theta-\ell \pi}{2} \bbk} =\pm k$.  This means that $\tfrac{\theta - \ell \pi}{2} $ is an odd multiple of $\tfrac{\pi}{2}$, i.e., $\gamma=\ell \pi $ and $\theta=n\pi$ for integers $\ell, n$ with different parity.  In this case, 
 $$\alpha=\alpha'+(-1)^{\ell} 2\nu\text{ and } \beta =\beta' +2\nu.$$
 
In summary,  the fiber of $i_2\colon \chi(H_0,T)\to \chi(S^2,4)$ over $[\theta,\gamma]$ is
 a torus unless $\gamma=\ell \pi$ and $\theta=n\pi$ with exactly one of $\ell,n$ odd,   in which case the fiber is the circle, $\TT^2$ modulo the diagonal or antidiagonal subgroup.

\medskip

\noindent{\em (c) The map $i_2$ is a bundle over $\chi^\dag(S^2,4)$.}  
 Consider the diagram (\ref{pocus}), in the three cases of $\circ=*,\dag,\emptyset$. 
 Part (a) of the argument (above) shows that $L$ is onto in all three cases. Part (b)   shows that $L_G$ is a diffeomorphism for $\circ=*$, a homeomorphism for $\circ=\dag$, and the   fiber of $L$ over each point $i_1(\rho)$ such that $i_2(\rho)\in \{\xi(0,\pi),\xi(\pi,0)\}$ is a circle.

If $\circ=*$, the left vertical map in the diagram is a smooth $\TT^2$ bundle,  since $G$ acts freely on $(\RR^2)^*$. Hence the right vertical map is a smooth $\TT^2$ bundle over $\chi^*(T^2,4)$.

For the  $\circ=\dag$ statement,    
consider the normal subgroup $\ZZ^2 \subset G$ generated by $\sigma_1,\sigma_2$.  
$\ZZ^2$ acts freely on $(\RR^2)^\dagger$, with quotient a twice punctured torus parameterized by $(\gamma, \theta)$.  So $((\RR^2)^\dagger \times \TT^2)/\ZZ^2 $ is a torus bundle over $(\RR^2)^\dagger / \ZZ^2$.  From this torus bundle, we can obtain $((\RR^2)^\dagger \times \TT^2)/G $ by further modding out by the action of $G/\ZZ ^2 \cong \ZZ_2$.  But note that, while $\sigma_0$ acts nontrivially on the torus factor of $\RR^2\times \TT^2$,   $[\sigma_0]\in G/\ZZ^2$ acts trivially on the fibers of $((\RR^2)^\dagger \times \TT^2)/\ZZ^2 $ over the fixed points $[0,0]$ and $[\pi, \pi]$ in the base space, because
$$\sigma_0(0,0,\alpha,\beta)=(0,0,\alpha,\beta), ~
\sigma_0(\pi,\pi,\alpha,\beta)=(-\pi,-\pi,\alpha+\pi,\beta+\pi) = \sigma_2 ^{-1}\sigma_1^{-1}(\pi,\pi,\alpha,\beta).
$$  This   
 implies that $((\RR^2)^\dag \times \TT^2)/G\to (\RR^2)^\dag /G$ is a  $\TT^2$ bundle over the topological annulus $\chi^\dagger (S^2, 4)$.  In fact, it is a trivial $\TT^2$ bundle, because  the annulus deformation retracts to the circle parameterized by $\xi(\tfrac \pi 2, \theta), ~0\leq \theta \leq 2\pi$ and the action of $\sigma_2$ shows that the bundle is trivial over this circle.
  \end{proof}

\subsubsection{Multicurves}
 Let
  ${\rm Curv}(\chi(S^2,4))$ denote the set of immersed {\em multicurves} in $\chi(S^2,4)$.
  By a multicurve we mean a finite union of   immersions $f_i$,  with domains $S^1$ or $[0,1]$, and range $\chi(S^2,4)$, such that
  that circles and interior of arcs are mapped into $\chi^*(S^2,4)$ and endpoints of arcs are mapped to corners, asymptotically linearly (see \cite[Section 6]{HK2}).  See also
  \cite{HHHK, HRW, KWZ} for the use of  multicurves in bordered Heegaard-Floer and Khovanov homology.
Theorem 8.1 of \cite{HK2} 
  shows that,  for any 2-tangle $(B,S)$ in a $\ZZ$-homology ball $B$, one can find a small holonomy perturbation $\pi$  so that  $\chi(B,S)_\pi\in  {\rm Curv}(\chi(S^2,4))$.

\medskip

The Lagrangian correspondence \cite{wein2}
$$\mathcal{L}_{(H_0,T)}\colon {\rm Curv}\left(\chi(S^2,4)\right)\to {\rm Lag}\left(\chi(F_2)\right)$$ induced by
$i\colon \chi(H_0,T)\to \chi(F_2)\times \chi(S^2,4)$ is defined set-theoretically as follows. Given $f\colon J\to \chi(S^2,4)$,    we let $M(f)$ denote the pullback  of $f$ and $i_1$.  
Then   the top horizontal row in the following diagram  defines $\mathcal{L}_{(H_0,T)}(f)$.\begin{equation}\label{diag55}
\begin{tikzcd}
M(f)\arrow[r,"\tilde f"]\arrow[d]&\chi(H_0,K)\arrow[d,"i_1"]\arrow[r,"i_2"]&\chi(F_2)\\
J\arrow[r,"f"]&\chi(S^2,4)&
\end{tikzcd}
\end{equation}
 
  If $J$ is a circle, its image lies in $\chi^*(S^2,4)$, and hence
 the pullback $M(f)$ is diffeomorphic to
 $\TT^3$ and $i_2\circ \tilde f\colon M(f)\to \chi(F_2)$ is a Lagrangian immersion.

 If $J$ is a closed arc, then the part of $M(f)$ which lies over the interior of $J$ is a smooth  open cylinder ${\rm Int}(J)\times \TT^2$.  Theorem \ref{wish} implies  that  $M(f)$ is either $J\times \TT^2$, or obtained from $J\times \TT^2$ by collapsing the torus to a circle over one of both endpoints.

  \begin{corollary}\label{correspond} The Lagrangian correspondence 
$$\mathcal{L}_{(H_0,T)}\colon {\rm Curv}\left(\chi(S^2,4)\right)\to {\rm Lag}\left(\chi(F_2)\right)$$
induced by $
i\colon \chi(H_0,T)\to \chi(F_2)\times \chi(S^2,4)$
acts in the following way. 
\begin{itemize}
\item A   smooth immersion of a circle $f\colon S^1\to \chi^*(S^2,4)$  is sent  by  $\mathcal{L}_{(H_0,T)}$
to the smooth Lagrangian immersion 
$$i_2\circ \tilde f\colon M(f)\cong \TT^3\to \chi^*(F_2).$$

\item An immersion of an arc $f\colon J\to \chi(S^2,4)$ is sent to  
$$i_2\circ \tilde f\colon M(f)\to \chi(F_2),$$ a Lagrangian immersion over $\chi^*(F_2)$,  where $M(f)$ is  one of the following: 
\begin{enumerate}
\item  $M(f)$ is a lens space 
 if both endpoints of  $f$ lie in   $\{\xi(0,\pi),\xi(\pi,0)\}$.
 \item $M(f)$ is 
a cylinder $I\times \TT^2$ if both endpoints of $f$ lie in  $\{\xi(0,0),\xi(\pi,\pi)\}$. In this case the boundary
tori are (affinely) embedded in $\chi^\anc(F_2)\subset \TT^4/\{\pm\}$.
\item  $M(f)$ is a solid torus 
$S^1\times D^2$ if $f$ has one endpoint of each type.  In this case, the boundary
torus is (affinely) embedded in $\chi^\anc(F_2)\subset \TT^4/\{\pm\}$.
\end{enumerate}
\end{itemize}
\end{corollary}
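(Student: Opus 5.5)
The plan is to read everything off from Theorem \ref{wish}, working throughout in the coordinates provided by $L$ and $\xi$. Note that in diagram (\ref{diag55}) the pullback is taken along the projection $\chi(H_0,T)\to\chi(S^2,4)$, called $i_2$ in (\ref{univ}), and the map to $\chi(F_2)$ is the embedding $i_1$. So $M(f)=\{(x,\rho)\in J\times\chi(H_0,T)\mid f(x)=i_2(\rho)\}$, and $\mathcal{L}_{(H_0,T)}(f)$ is $i_1\circ\tilde f$. Since $i_1$ is an embedding, all questions about the map to $\chi(F_2)$ reduce to questions about $\tilde f$.

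\textbf{Circle case.} Here $f(S^1)\subset\chi^*(S^2,4)$. By Theorem \ref{wish}(3) and the $\circ=*$ part of the proof of Theorem \ref{wish}, $i_2$ is a smooth $\TT^2$ bundle over $\chi^*(S^2,4)$, and it is the restriction of the trivial bundle over the annulus $\chi^\dag(S^2,4)$ from Theorem \ref{wish}(2). Hence $M(f)=f^*(\text{bundle})\cong S^1\times\TT^2=\TT^3$. To see that $\tilde f$ is an immersion, I would use that it is the pullback of an immersion along a submersion: a tangent vector killed by $d\tilde f$ projects to a vector killed by $df$, hence is zero. For the Lagrangian property I would invoke the folklore result of \cite{CHK}: $i$ is a Lagrangian immersion into $\chi(F_2)^-\times\chi(S^2,4)$ on the smooth locus, and $f$ is a (one-dimensional, hence automatically) Lagrangian immersion into the surface $\chi^*(S^2,4)$. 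Because $i_2$ is a submersion there, the composition $\Gamma\circ f$ is transverse, so the standard linear-algebra lemma for composing Lagrangian correspondences shows the image is isotropic. A dimension count ($1+2=3=\tfrac12\cdot 6$) then makes it Lagrangian.

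\textbf{Arc case.} Over $\mathrm{Int}(J)$, the same argument gives a smooth Lagrangian immersion of $\mathrm{Int}(J)\times\TT^2$ into $\chi^*(F_2)$. Near each endpoint I would use the asymptotic linearity of $f$ to lift it to a straight segment in $\RR^2$ ending at a lattice point $(\gamma_0,\theta_0)\in(\pi\ZZ)^2$. The map $L$ restricted to (segment)$\times\TT^2$ then parametrizes $M(f)$ near that end. There are two kinds of endpoint.
\begin{itemize}
\item If $(\gamma_0,\theta_0)\equiv(0,0)$ or $(\pi,\pi)$, Theorem \ref{wish}(6) says the fiber is a whole torus of abelian representations. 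So that end of $M(f)$ is a collar $[0,\epsilon)\times\TT^2$. Its boundary torus $L(\gamma_0,\theta_0,\alpha,\beta)$ is affine in $(\alpha,\beta)$, by formula (\ref{ell1}) with $\gamma_0,\theta_0$ fixed, and lies in $\chi^\anc(F_2)\subset\TT^4/\{\pm1\}$.
\item If $(\gamma_0,\theta_0)\equiv(0,\pi)$ or $(\pi,0)$, Theorem \ref{wish}(5) says the torus collapses over the endpoint along the diagonal, respectively antidiagonal, circle subgroup. So that end of $M(f)$ is the mapping cylinder of $\TT^2\to\TT^2/S^1_{\pm}$, that is, a solid torus $S^1\times D^2$ whose meridian is the slope $(1,\pm1)$.
\end{itemize}
Assembling these ends gives the three cases of the statement. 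With two collar ends, $M(f)\cong I\times\TT^2$. With one collar end and one collapsing end, $M(f)$ is a solid torus. With two collapsing ends, $M(f)$ is a union of two solid tori glued along $\TT^2$. Using the global trivialization over the arc supplied by Theorem \ref{wish}(2), this is a lens space, possibly degenerate, with meridian slopes $(1,\pm1)$.

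\textbf{Main obstacle.} The delicate point is the local model at the collapsing endpoints: showing that the quotient topology coming from $L_G$ near $\xi(0,\pi)$ and $\xi(\pi,0)$ really is the mapping-cylinder/solid-torus topology, and tracking the slope through the trivialization. My first attempt would be to verify directly from (\ref{fine artiste}) that $L$ identifies points exactly along the circle subgroup only over the endpoint. I would then apply the fact that a continuous bijection from a compact space to a Hausdorff space is a homeomorphism to the induced map from the mapping cylinder. The Lagrangian immersion claim only needs to hold over $\chi^*(F_2)$, so no smoothness is required at these ends.
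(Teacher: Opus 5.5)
Your route is the paper's. The paper simply observes that Theorem \ref{wish} forces $M(f)$ to be $\TT^3$, $J\times\TT^2$, or $J\times\TT^2$ with the torus collapsed to a circle over one or both endpoints. You fill in the details the paper omits: immersivity of the pullback, isotropy from the Lagrangian correspondence $i$ (with transversality coming from $i_2$ being a submersion over $\chi^*(S^2,4)$), and the compact-to-Hausdorff argument for the end models. You also correctly catch the $i_1/i_2$ swap in (\ref{diag55}). Your slope bookkeeping sharpens case (1): ends at $\xi(0,\pi)$ and $\xi(\pi,0)$ give meridians $(1,1)$ and $(1,-1)$, hence $L(2,1)=\RR P^3$ as in Example (d), while two ends at the same such corner give $S^1\times S^2$. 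For this, take global coordinates from a lift of the whole arc to $\RR^2$ composed with $L$. Do not use Theorem \ref{wish}(2): its trivialization lives over $\chi^\dag(S^2,4)$ and so misses exactly these corners. Since $G$ acts on the $\TT^2$ factor by translations, the collapsing subgroups do not depend on the lift.

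The step that fails is your closing claim that no smoothness is needed at the collapsing ends because the Lagrangian assertion concerns only $\chi^*(F_2)$. Those ends do not leave $\chi^*(F_2)$. At $(\gamma,\theta)=(0,\pi)$ or $(\pi,0)$ one has $\kappa=\cos(\gamma-\theta)=-1$. For instance, $L(0,\pi,\alpha,\beta)$ sends $r_-\mapsto -\bbi$ and $s_-\mapsto \bbk e^{(\tfrac{\pi}{2}+\alpha)\bbi}$, which anticommute. So the core circles in cases (1) and (3) land in $\chi(F_2)_{-1}\cong\RR P^3\subset\chi^*(F_2)$, and the statement is meant to include them. Compare Example (a), where the center circle maps into $\chi(F_2)_{-1}$, and Example (d), where the whole lens space is the Lagrangian $\chi(F_2)_{-1}$.

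Your transversality argument cannot simply be extended there. $\chi(S^2,4)$ is singular at the corners, and $\chi(H_0,T)$ is not even a manifold along these circles. By Theorem \ref{wish}(5) and the stabilizer $\{1,\sigma_2\sigma_0\}$ of $(0,\pi)$ in $G$, a neighborhood of the circle fiber is $S^1\times{\rm Cone}(\TT^2)$. As written, then, you have the Lagrangian immersion only over ${\rm Int}(J)$. Covering the core circle needs a separate local computation with (\ref{ell1}):
\begin{itemize}
\item put polar coordinates $(r,\psi)$ on the end of $M(f)$, with $r$ along the lifted ray and $\psi$ the collapsing diagonal or antidiagonal angle;
\item show that, modulo conjugation, $i_1\circ\tilde f$ is smooth and immersive in $(r\cos\psi,r\sin\psi)$;
\item get isotropy by continuity.
\end{itemize}
The paper's sketch does not carry this out either, but the ends cannot be dismissed on the grounds you give.
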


\noindent{\em Remark.}  In the next subsection, we parameterize examples of 
all four possibilities in Corollary \ref{correspond} using maps $[0,2\pi]\times \TT^2\to \chi(F_2)$.

\subsection{Examples}\label{ccm}   
Figure \ref{fig10fig} illustrates several tangles; some with (red) perturbation curves and (blue) $w_2$ arcs. These latter encode gauge-theoretic data required to ensure
smoothness of certain instanton moduli spaces in the construction of instanton homology by Floer and Kronheimer-Mrowka. In particular, Figures \ref{fig10fig} (e) and (f) are instances of {\em adding a strong marking}, a step used by Kronheimer-Mrowka in their construction of the singular instanton homology of links and webs \cite{KM10,KM-khovanov}.

\begin{figure}[ht!]
\labellist
\pinlabel $(a)~(B,S_1)$ at 100 420 
\pinlabel $(b)~(B,S_2)$ at 300 420
\pinlabel $(c)~(B,S_3)$  at 460 425
\pinlabel $(d)~(B,S_4)$  at 100 240
\pinlabel $(e)~(B,S_5)$  at 275 240
\pinlabel $(f)~\text{bypass correspondence}$  at 470 220
\endlabellist
\centering
\includegraphics[width=6.5in, height=3.8in]{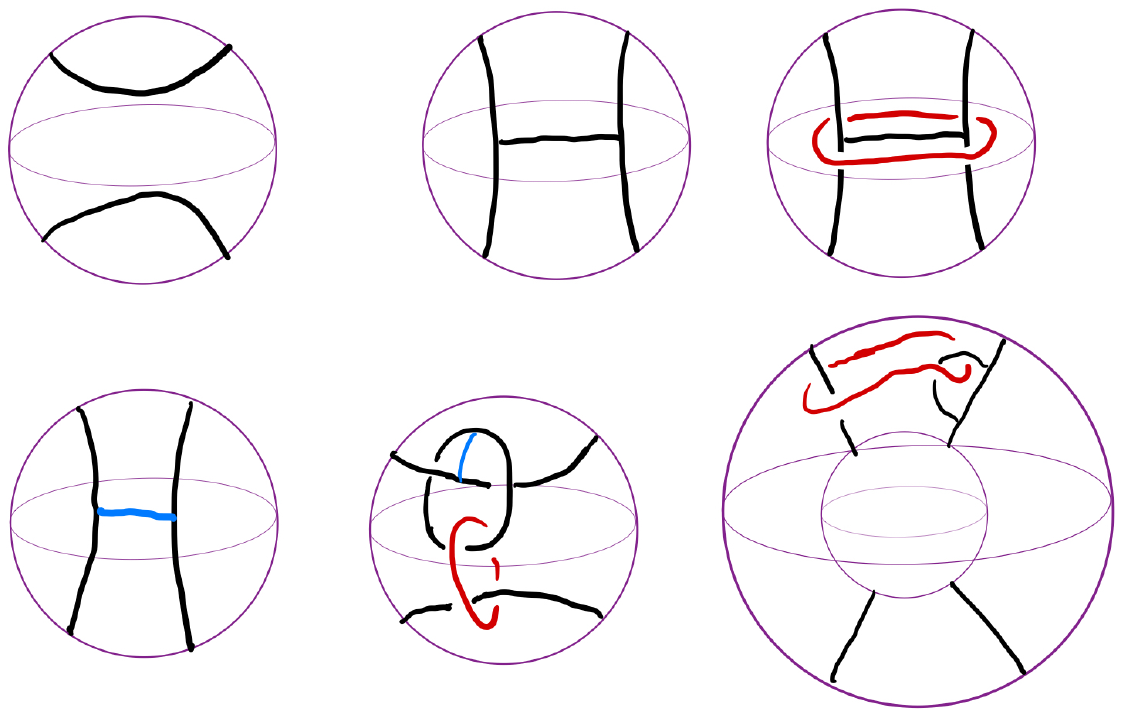}
 \caption{\label{fig10fig} Four-ended tangles in the ball and 
 a modified product tangle in $S^2\times I$, enhanced with perturbation curves and $w_2$ arcs }\end{figure}

\subsubsection{Example (a)}  Consider the knot $ K$ in the handlebody $H$ obtained by
filling in  the 2-sphere boundary component of $H_0$   with the tangle $(B,S_1)$  in Figure \ref{fig10fig} (a).  It is simple to verify that $\chi(B,S_1)$ is an arc, its image in $\chi(S^2,4)$ is parameterized as $\xi(\pi, t),~0\leq t<\pi$.   Corollary \ref{correspond} implies  that $\chi(H,K)\cong D^2\times S^1$. The center circle maps to a circle in $\chi(F_2)_{-1}$ and the boundary torus maps to $\chi(F_2)^\anc$.

\subsubsection{Example (b)}  Consider the {\em eyeglass web} $W$ in the handlebody $H$ obtained by
filling in the boundary four-punctured 2-sphere of $H_0$ with the web $(B,S_2)$ illustrated in Figure \ref{fig10fig} (b) (see also Figure \ref{fig9fig}).  It is simple to verify that $\chi(B,S_2)$ is a circle with image in $\chi^*(S^2,4)$ parameterized as $\xi(t,t+\frac\pi 2),~0\leq t\leq 2\pi$.  Applying Corollary \ref{correspond}, one concludes that $\chi(H,W)$ is an embedded 3-torus parameterized by
$$L(t,t+\tfrac\pi 2,\alpha, \beta),~ (t,\alpha,\beta)\in [0,2\pi]\times \TT^2.$$ 

The image of the Lagrangian embedding  $\chi(H,W)\hookrightarrow\chi^*(F_2)$ is 
 the orbit of the conjugacy class of 
$L(0,\tfrac\pi 2,-\tfrac\pi 4, -\tfrac\pi 4)=\left[\left(-\bbi, e^{\tfrac\pi 4\bbk}, e^{\tfrac\pi 4\bbk},\bbi \right)\right]$
under the  Jeffrey-Weitsman  Hamiltonian $\TT^3$ action \cite{jeffrey-weitsman}.

 \subsubsection{Example (c)}  The tangle $(B,S_3)$ in Figure \ref{fig10fig} (c) again produces the eyeglass web $W$ in $H$ when glued to $H_0$.  The extra (red) circle is a holonomy perturbation curve which we call $P$.  As explained in Proposition 6.2 of \cite{HK2}, since $P$ is isotopic into the 4-punctured $S^2$ boundary component,   for small $\ep$, the holonomy perturbed character variety $\chi(B,S_3)_{P,\ep}$ is diffeomorphic to the unperturbed variety $\chi(B,S_3) =\chi(B,S_2),$ and hence is a circle.  The restriction map $\chi(B,S_3)_{P,\ep}\to \chi(S^2,4)$ has the same image as
the restriction map $\chi(B,S_2)\to \chi(S^2,4)$ composed with the homeomorphism 
$$H_\ep\colon \chi(S^2,4)\to \chi(S^2,4),~H_\ep\left(\xi(\gamma,\theta)\right)= \xi(\gamma,\theta-2\ep\sin\gamma).$$
In particular $\kappa=\cos( \tfrac \pi 2 - 2\epsilon \sin t ),$ which is non-zero if $\ep\sin t\ne 0$. 

 Using  the previous example, and Proposition 6.2 of \cite{HK2}, one  concludes that the embedded Lagrangian torus $i(\chi(H,W)_{P,\ep}))\subset \chi^*(F_2)$ is parameterized by 
 $$(t,\alpha, \beta)\mapsto L(t, t+\tfrac\pi 2 -2\ep \sin t, \alpha, \beta),~ (t,\alpha,\beta)\in [0,2\pi]\times\TT^2.$$  
When $\ep\neq 0$,  this Lagrangian intersects $ \chi(F_2)_0$ in the pair of tori $L(\{(0,\tfrac \pi 2), (\pi, \tfrac{3\pi}{2})\}\times\TT^2)$. Contrast this with the unperturbed case  where $i(\chi(H,W))\subset \chi(F_2)_0$.
 
 \subsubsection{Example (d)}\label{w2}  The tangle $(B,S_4)$  in Figure \ref{fig10fig} (d)
 has two vertical arcs, augmented by a  (blue) {\em $w_2$-arc} $A$, which represents the  Poincar\'e dual of a relative second Stiefel-Whitney class \cite{KM1,HHK}.  The   character variety $\chi(B,S_4, A) $
 can be defined as
  $$\chi(B,S_4,A)=\left. \left\{\rho\in \Hom(\pi_1(B\setminus\{S_4\cup A\}),SU(2))\mid \Real(\rho(\mu_i))=0,~\Real(\rho(\mu_A))=-1\right\}\right/_\conju 
  $$
  where $\mu_i,~i=1,2$ denotes the meridians of the two strands of $S_4$ and $\mu_A$ denotes the
  meridian of the $w_2$-arc $A$.  It is easily seen to be parameterized by  the arc
  $\xi(t, t+\pi), ~t\in[0,\pi].$ Corollary \ref{correspond} implies that 
  $\chi(B,S_4,A) $ is a lens space.  
  
Gluing  $(B,S_4,A)$ to $H_0$  yields a 2-component link $L$ in $H$ augmented by the $w_2$-arc.
  Since $\kappa=-1$,  
  $i\colon  \chi(H,L,A)\to \chi(F_2)$ is a Lagrangian embedding of a lens space into $\chi(F_2)_{-1}\cong \RR P^3$ (Section \ref{POT8}), so $\chi(H,L,A)=\chi(F_2)_{-1}$.   The Darboux-Weinstein  theorem \cite{weinstein} implies that    $\chi(F_2)_{[-1,0]}$  is diffeomorphic to the (trivial) unit cotangent disk bundle of $\RR P^3$, giving a second, symplectic proof of Theorem \ref{thm7}.  
  
  \medskip 
  
   From examples (a) and (d), one can glue $(B,S_1)$ or $(B,S_4)$ by any mapping class element of $(S^2,4)$ to  $H_0$, to produce embedded arcs of different slopes (see \cite{HHK,FKP}), providing ample examples of embedded Lagrangian manifolds in $\chi^*(F_2)$  of types (1), (2), and (3) in the Corollary.
\subsubsection{Example (e)}   The tangle $(B, S_5)$ in Figure \ref{fig10fig} (e) is a pair of horizontal arcs, along with a linking circle on one, augmented by a perturbation curve $P$ and a $w_2$-arc $A$.  
 Gluing this tangle to $H_0$ results in a  knot  $K$ in $H$.  
  
   In \cite[Theorem 7.1]{HHK}, it is shown that $\chi(B,S_5)_{P,\ep}\to \chi(S^2,4)$ is 
 a ``Figure 8'' immersion of a circle, parameterized as $$\xi(\tfrac\pi 2 +t+\ep\sin t  , \tfrac\pi 2 +t-\ep\sin t ), ~ t\in [0,2\pi].$$
By Corollary \ref{correspond}, $\chi(H,K,A)_{P,\ep}\cong  \TT^3$, and  $i(\chi(H,K,A)_{P,\ep})\subset \chi^*(F_2)$ equals the image of
$$(t, \alpha, \beta)\mapsto L(\tfrac\pi 2 +t+\ep\sin t  , \tfrac\pi 2 +t-\ep\sin t ,\alpha,\beta), ~ (t,\alpha, \beta)\in [0,2\pi]\times \TT^2. $$
Since $\kappa=\cos(2\ep \sin t)$,  $\chi(H,K,A)_{P,\ep}$ meets $\chi(F_2)_1$ in the two 2-tori 
 $L(\{\pm (\tfrac\pi 2,\tfrac\pi 2)\}\times\TT^2)$
 for $\ep\neq 0$, unlike the unperturbed variety
 $\chi(H,K,A)$, whose restriction lies in $\chi(F_2)_1.$

 \subsubsection{Example (f)} The  tangled web  $(S^2\times I, E)$ in Figure \ref{fig10fig} (f) is called the {\em bypass correspondence} in \cite{HK3}.  Gluing one $(S^2,4)$ boundary component of $(S^2\times I, E)$ to $(S^2,4)\subset \partial H_0$ results in another tangled web in $H_0$.  Theorem B of \cite{HK3} asserts that  $\mathcal{L}_{(S^2\times I, E)}\colon {\rm Curv}(\chi(S^2,4))\to  {\rm Curv}(\chi(S^2,4))$ 
 takes an immersed circle $S^1\to \chi^*(F_2)$ to its {\em double} (precomposition by $S^1\times\{\pm1\}\to S^1$), and takes an immersed arc $I\to \chi^*(F_2)$ to the {\em Figure 8 curve in $\chi(F_2)$ supported near the immersed arc}  \cite[Definition 6.7]{HK3}.  Thus, the composite Lagrangian correspondence
\begin{equation}\label{cool2}
{\rm Curv}(\chi(S^2,4))\xrightarrow{\mathcal{L}_{(S^2\times I, E)}}  {\rm Curv}(\chi(S^2,4))
 \xrightarrow{\mathcal{L}_{(H_0,T)}} {\rm Lag}(\chi(F_2))\end{equation}
takes any circle  to a pair of immersed 3-tori, and it takes any immersed arc to the 3-torus over the Figure 8 curve   supported near the arc.

\subsubsection{Perturbing a non-transverse  pair of Lagrangians in $\chi(F_2)$.}\label{HOE}
We  make  a pair of Lagrangians  associated to a Heegaard splitting transverse using a holonomy perturbation.
Figure \ref{fig9fig} illustrates a linked pair $W_1,W_2$ of eyeglasses in $S^3$, separated by a genus 2 surface. Also illustrated is a (red) holonomy perturbation  curve $P$ which lies on the Heegaard surface $F_2$.

 We identify the solid handlebody containing $W_1$ with $H$ and call its complement $H'$, so that 
 \begin{equation}\label{LE}
 (S^3,W_1\sqcup W_2)=(H,W_1)\cup_{F_2}(H',W_2).\end{equation}
 The pair $(H,W_1)$ equals the pair $(H,W)$ from Example (b).
 Moreover,   there exists a diffeomorphism $d\colon  (H',W_2)\to (H,W)$ whose
boundary restriction  $d_\partial\colon F_2\to F_2$ satisfies  $d_\partial(r_\pm)=s_\pm$ and $d_\partial(s_\pm) =r_\pm$. \begin{figure}[ht!]
\labellist
\pinlabel ${\color{red} P}$ at 265 360 
\pinlabel $W_1$ at 170 510 
\pinlabel $W_2$ at 140 290 
 \endlabellist
\centering
\includegraphics[width=2.8in] {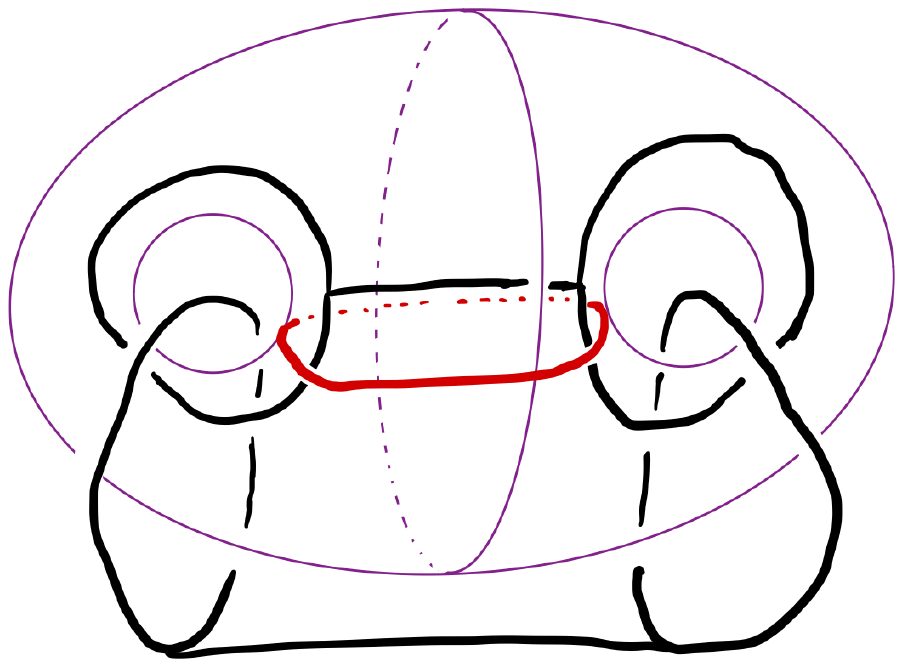}
  \caption{\label{fig9fig} A genus two splitting of a linked pair of eyeglasses}
\end{figure}

Therefore, the decomposition (\ref{LE}) determines the commutative diagram:
\begin{equation*}\label{diag9.1}
\begin{tikzcd}
\chi(S^3,W_1\sqcup W_2)_{P,\ep}\arrow[d]\arrow[rr]&&\chi(H',W_2)\arrow[d,"i_2", hook]&\chi(H,W)\cong \TT^3 \arrow[d,"i_1", hook]\arrow[l," d","\cong"']\\
\chi(H,W)\cong\TT^3\arrow[r, "i_1", hook]&\chi(F_2)\arrow[r,"\mathcal{F}_{P,\ep}", "\cong"']&\chi(F_2)&\chi(F_2)\arrow[l, "d_\partial"', "\cong"]
\end{tikzcd}
\end{equation*}
so the maps $i_2$ and  $d_\partial\circ i_1$ have the same image.
The gluing parameters are trivial, and every map in the diagram is injective. Example (c) shows that the composite $i_{1,\ep}:=\mathcal{F}_{P,\ep}\circ i_1$ has image parameterized
as 
$$i_{1,\ep}(t, \alpha,\beta)=L(t, t+\tfrac \pi 2 -2\ep \sin t,\alpha, \beta),~(t, \alpha,\beta)\in [0,2\pi]\times \TT^2.$$
Using Example (b), the smooth embedding $i_2$ has image parameterized as 
$$d_\partial \circ L(t, t+\tfrac \pi 2,\alpha,\beta),~  (t, \alpha,\beta)\in [0,2\pi]\times \TT^2,$$
where $d_\partial ([R_-,S_-,R_+,S_+])=([S_-,R_-,S_+,R_+]).$

\medskip

We identify the intersection $i_{1,\ep}(\TT^3)\cap i_2(\TT^3)$.
Suppose $i_{1,\ep}(t, \alpha,\beta)\in i_2(\TT^3)$.   
Every $\rho$ in the image of $i_2$ satisfies $\Real(\rho(s_-))=0$ and $\Real(\rho(r_+))=0$.
From the first of these equations, 
$$
0=\Real(i_{1,\ep}(t, \alpha,\beta)(s_-) )= \Real(L(t,  t+\tfrac \pi 2 -2\ep \sin t, \alpha, \beta)(s_-))
=\cos(\tfrac \pi 4 -\ep\sin t)\cos(\tfrac \pi 4 -\ep \sin t +\tfrac t 2 +\alpha).$$
Since $\cos(\tfrac \pi 4 -\ep\sin t)$ is non-zero if $\ep$ is small enough,  $0=\cos(\tfrac \pi 4 -\ep \sin t +\tfrac t 2 +\alpha)$, and therefore 
$$\alpha=\tfrac \pi 4 +\ep \sin t -\tfrac t 2 +\ell_\alpha \pi$$ for $\ell_\alpha=0$ or $1$. Applying the same logic to $r_+$ instead of $s_-$ shows that 
$$\beta=\tfrac \pi 4 +\ep \sin t -\tfrac t 2 +\ell_\beta \pi$$ for $\ell_\beta=0$ or $1$.

If $\ep=0$, then $t,\ell_\alpha, \ell_\beta$ determine $\alpha$ and $\beta$.   The intersection of the {\em unperturbed} character varieties is a disjoint union of four circles determined by the parities $\ell_\alpha$ and $\ell_\beta$.

Suppose instead that $\ep$ is non-zero and small. 
Recall from Example (b) that 
$i_1(\TT^3)\subset \chi(F_2)_0.$  The diffeomorphism $d_\partial$ sends the commutator $[r_-,s_-]$ to its inverse, and hence preserves $\kappa$.  
Therefore
the image of $i_2$ again lies in $\chi(F_2)_0$.
Since $i_{1,\ep}(t, \alpha, \beta)\in i_2(\TT^3)$,
$$0=\kappa(t, \alpha, \beta)=\cos(\tfrac\pi 2-2\ep \sin t),$$
and hence $t=0 $ or $\pi$.  One can check by hand that the eight resulting points, $(t,\alpha, \beta)=(\pm \tfrac \pi 2,0,\pm \tfrac \pi 2)$ or $(\pm \tfrac \pi 2, \pi, \pm \tfrac \pi 2)$, lie in $i_2(\TT^3)$ and are transverse intersection points, but this is better understood as a gauge-theoretic consequence of the fact that the perturbation curve represents $r_-\overline{s_+}$, and $\Real(R_- \overline{S_+})=\cos t$  is Morse with two critical points on each of the four circles  in the unperturbed intersection 
$$i_1\left(\left\{e^{\theta\bbi}\right\}\times \left\{ (\pm \tfrac\pi 2,\pm \tfrac\pi 2)\right\}\right)=\chi(H_1,W_1)\cap \chi(H_2,W_2). $$ These four circles are Morse-Bott critical levels of the Chern-Simons function of the closed 3-manifold $(S^3,L)$, and hence   (\cite[Lemma 7]{Boden-Herald}) the holonomy perturbed Chern-Simons function is Morse with eight critical points. This latter fact implies that the intersection is transverse, using Weil's theorem and the Mayer-Vietoris sequence.
 
\subsection{The 5-complex $\chi(F_2)_1$}
 In contrast to $\kappa_{\CC P^3}$, $\kappa$ is not Morse-Bott  over $[-1,1]$. In fact, the level set $\kappa^{-1}(1)=\chi(F_2)_1$ is a 5-dimensional CW-complex homotopy equivalent to $S^2\times S^2$, but is not a manifold, as we next explain.

Consider the surjective map 
 $\Psi\colon \TT^4\times[0,\pi]\to \chi(F_2)_1$ given by
  \begin{equation}\label{pss}\Psi(\alpha_-,\beta_-,\alpha_+,\beta_+,\gamma)=  \left[\left(e^{\alpha_-\bbi},~
   e^{\beta_-\bbi},~ e^{\tfrac\gamma2 \bbk}e^{\alpha_+\bbi}e^{-\tfrac\gamma2 \bbk},~ e^{\tfrac\gamma2 \bbk}e^{\beta_+\bbi}e^{-\tfrac\gamma2 \bbk}\right)\right]. \end{equation}
 The parameter $\gamma\in [0,\pi]$ determines the angle between  the maximal torus containing  the  images of $r_-,s_-$ and the maximal torus containing the images of $r_+,s_+$.

Define a $(\ZZ/2)^2$ action on $\TT^4\times I=(\RR/2\pi\ZZ)^4\times[0,\pi]$   generated by the two involutions
 $$(\alpha_-,\beta_-,\alpha_+,\beta_+,\gamma)\mapsto(-\alpha_-,-\beta_-,-\alpha_+,-\beta_+,\gamma)
 $$
  $$(\alpha_-,\beta_-,\alpha_+,\beta_+,\gamma)\mapsto (\alpha_-,\beta_-,-\alpha_+,-\beta_+,\pi-\gamma).
 $$
 Then $\Psi$ factors through the quotient, inducing  a surjection
   $$\overline{\Psi}\colon (\TT^4\times I)/\left( \ZZ /2\right)^2\to \chi(F_2)_1.$$

The following proposition summarizes the properties  of $\overline{\Psi}$;  we leave the proof  to the reader.  Define $\left( \chi(F_2)_1 \right)^*$ to be the open dense set of conjugacy classes of representations that restrict noncentrally to both $\pi_1(F_-)$ and $\pi(F_+)$.  
 \begin{proposition} \label{tori} \hfill
 \begin{enumerate}
\item  The  fiber  of  $\overline{\Psi}$ over a point $\rho$ is a singleton if $\rho\in \left( \chi(F_2)_1 \right)^*$; otherwise the  fiber  is an interval.

\item The deformation retract of $\TT^4\times [0,\pi]$ to $\TT^4\times \left\{\tfrac \pi 2\right\}$
 given by $(\tau,\gamma,s)\mapsto \left(\tau,(1-s)\gamma+ s\tfrac \pi 2\right)$ is $\left( \ZZ/2\right) ^2 $-equivariant and induces a deformation retraction of $\chi(F_2)_1$ to $\Psi(\TT^4\times \left\{\tfrac \pi 2\right\})$.

 \item \label{third part} The image of 
 $$\TT^4\times \left\{\tfrac \pi 2\right\}\xrightarrow{\Psi}\chi(F_2)_1$$
 is homeomorphic to $S^2\times S^2$, and this map has degree four onto its image.   
 
 \item \label{fourth part} The image of 
 $$\TT^4\times \{0\}\xrightarrow{\Psi}\chi(F_2)_1$$
 equals the abelian locus $\chi^\ab(F_2)\cong \TT^4/\{\pm 1\}$, and this map has degree two onto its image.

 \end{enumerate}
 \end{proposition}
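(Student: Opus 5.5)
The plan is to derive all four parts of Proposition \ref{tori} from the double-coset description of the gluing parameters in Proposition \ref{fibers2}, combined with the structure of the pillowcase $\partial B$.

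\textbf{Surjectivity and part (1).} Any $\rho$ with $\kappa(\rho)=1$ has abelian restrictions $\rho_\pm$ to $\pi_1(F_\pm)$, since $\Real(h([r,s]))=1$ forces $h$ to be abelian. Conjugate so that $\rho_-$ lies in $\{e^{\theta\bbi}\}$. The remaining freedom is conjugation by the normalizer of that torus, which is $\{e^{\nu\bbi}\}\cup\{e^{\nu\bbi}\bbj\}$. Choose $A$ with $A^{-1}\rho_+A\subset\{e^{\theta\bbi}\}$. The axis $A\bbi\bar A\in S^2$ can then be rotated by $e^{\nu\bbi}$ into the great circle through $\bbi$ and $\bbk$, giving angle $\gamma\in[0,\pi]$ from $\bbi$. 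This shows $\Psi$ is surjective.

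For the fibers, identify the quotient by $(\ZZ/2)^2$ with the fiber product $\partial B\times_\kappa\partial B$ times the gluing parameter $\gamma$ (equivalently $\cos\gamma$). Then compare with Proposition \ref{fibers2}:
\begin{itemize}
\item If both $\rho_\pm$ are noncentral, the fiber of $\phi$ is $U(1)\backslash SU(2)/U(1)\cong[-1,1]$, and it is parametrized bijectively by $\gamma$. So $\overline\Psi$ is injective there.
\item If one of $\rho_\pm$ is central, $\gamma$ becomes irrelevant, and the whole $\gamma$-interval collapses to a point.
\end{itemize}
The main care is needed here, to check that the two involutions account for exactly the remaining identifications. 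The first involution is Weyl conjugation by $\bbj$. The second is conjugation by $\bbk$ composed with the relabeling $\gamma\mapsto\pi-\gamma$.

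\textbf{Part (2).} The map $\gamma\mapsto(1-s)\gamma+s\tfrac\pi2$ commutes with $\gamma\mapsto\pi-\gamma$ and fixes $\tfrac\pi2$. Hence the straight-line homotopy is $(\ZZ/2)^2$-equivariant and descends to the quotient. To see that it descends through $\overline\Psi$, note that the fibers of $\overline\Psi$ are exactly the $\gamma$-intervals over central restrictions. The homotopy maps each such interval into the same kind of interval, so it respects the fibers. Continuity on $\chi(F_2)_1$ follows because $\overline\Psi$ is a closed map between compact Hausdorff spaces, hence a quotient map.

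\textbf{Parts (3) and (4).} At $\gamma=\tfrac\pi2$, the image is the set of pairs (class of $\rho_-$ in $\partial B$, class of $\rho_+$ in $\partial B$), with the two tori perpendicular. The second involution fixes $\gamma=\tfrac\pi2$ and acts on $\TT^4$ by negating $(\alpha_+,\beta_+)$ only. Together with the first involution, the group $(\ZZ/2)^2$ acts on $\TT^2\times\TT^2$ as $\iota\times\iota$ generated componentwise. The quotient is therefore $(\TT^2/\iota)\times(\TT^2/\iota)=\partial B\times\partial B\cong S^2\times S^2$, and this quotient map has degree $4$. One must also check that $\overline\Psi$ is injective on this slice; the points with a central restriction are identified consistently, by part (1).

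At $\gamma=0$, both tori coincide, so $\Psi$ lands in $\chi^\ab(F_2)=\TT^4/\{\pm1\}$ via the identity parametrization. This map has degree $2$, and it is onto because $\gamma=0$ reaches every abelian class.

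\textbf{Main obstacle.} The main obstacle is the bookkeeping in part (1): showing that the fibers are exactly the $(\ZZ/2)^2$-orbits away from central restrictions, including the boundary cases $\gamma\in\{0,\pi\}$. I would handle this by reducing to the $\phi$-fiber description of Proposition \ref{fibers2}.
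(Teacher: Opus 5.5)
The paper does not prove this proposition; it says ``we leave the proof to the reader.'' So there is no route to compare against. Your strategy works and proves all four parts: factor $\phi\circ\overline\Psi$ through the projection $q$ of $(\TT^4\times[0,\pi])/(\ZZ/2)^2$ onto $\partial B\times\partial B$, given by $(\alpha_-,\beta_-,\alpha_+,\beta_+,\gamma)\mapsto([\alpha_-,\beta_-],[\alpha_+,\beta_+])$, then compare $q$-fibers with the $\phi$-fibers of Proposition \ref{fibers2}. Three details need correcting.

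\textbf{The rotation plane.} Conjugation by $e^{\frac\gamma2\bbk}$ sends $\bbi$ to $\cos\gamma\,\bbi+\sin\gamma\,\bbj$. So in the surjectivity argument, $e^{\nu\bbi}$ should rotate the second axis into the great circle through $\bbi$ and $\bbj$, not through $\bbi$ and $\bbk$.

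\textbf{The involutions.} You have these mislabeled. The first involution (negate everything, fix $\gamma$) is conjugation by $\bbk$. The second is simply conjugation by $\bbi$, with no relabeling. Conjugation by $\bbj$ gives $(\alpha_-,\beta_-,\alpha_+,\beta_+,\gamma)\mapsto(-\alpha_-,-\beta_-,\alpha_+,\beta_+,\pi-\gamma)$, which is the composite of the two generators. The group is $\{\pm1,\pm\bbi,\pm\bbj,\pm\bbk\}/\{\pm1\}$: the conjugations preserving both $\{\pm\bbi\}$ and the $\bbi\bbj$-plane.

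\textbf{The quotient is not a global product.} It is not $\partial B\times\partial B$ times a $\gamma$-interval. Over $([a_-],[a_+])$, the preimage in $\TT^4\times[0,\pi]$ is the union of the segments $\{\pm a_-\}\times\{\pm a_+\}\times[0,\pi]$.
\begin{itemize}
\item If both restrictions are noncentral ($a_\pm\neq -a_\pm$), the four segments are distinct and $(\ZZ/2)^2$ permutes them simply transitively. So the $q$-fiber is a single $[0,\pi]$. The map $\gamma\mapsto U(1)e^{\frac\gamma2\bbk}U(1)$ sends it bijectively onto $U(1)\backslash SU(2)/U(1)\cong[-1,1]$, with coordinate $\cos\gamma$. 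This gives injectivity of $\overline\Psi$ on $(\chi(F_2)_1)^*$, including the abelian endpoints $\gamma=0,\pi$.
\item If either restriction is central, a nontrivial group element stabilizes a segment and acts on it by $\gamma\mapsto\pi-\gamma$. So the $q$-fiber is $[0,\pi]/(\gamma\sim\pi-\gamma)\cong[0,\tfrac\pi2]$. This is still an interval, and all of it maps to the single point of the $\phi$-fiber.
\end{itemize}
This is exactly the bookkeeping you flagged as the main obstacle.

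The folding also makes your injectivity claim on the slice $\gamma=\tfrac\pi2$ in part (3) immediate, since each folded interval meets that slice exactly once. With these fixes, your arguments for parts (2), (3) and (4) go through as written. That includes the equivariance of the straight-line homotopy and its descent through the closed quotient map $\overline\Psi$.
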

Since $\chi(F_2)_1$ is homotopy equivalent to $S^2\times S^2$, it cannot be a closed manifold of dimension other than four.  But the proposition shows that it contains an open 5-ball. Hence $\kappa$ is not Morse-Bott  near $\kappa=1$, for any smooth structure on $\chi(F_2)$.

  \subsection{Representatives of $H_*(\chi(F_2))$}\label{homology}
 We now provide several explicit embedded 2- and 4-manifolds in $\chi(F_2)$  representing various homology classes.    
 Let  $x\in H_2(\chi(F_2))$  and $y\in H_4(\chi(F_2))$  be generators and denote by $z= \Psi_*([\TT^4\times \{\gamma \}])\in H_4(\chi(F_2))$ for any $\gamma\in [0,\pi]$. Parts (\ref{third part}) and (\ref{fourth part}) of Proposition \ref{tori} implies that $4[S^2\times S^2]=z$, and $2[\chi^\ab(F_2)]=z$.

\subsubsection{Representing the class $2y$ by an embedded $S^2\times S^2$}  
 Part (\ref{third part}) of  Proposition \ref{tori} implies that 
 the image $Q':=\Psi(\TT^4\times\{\tfrac \pi 2\})$ is homeomorphic to $S^2\times S^2$ and meets each fiber of $\phi\colon \chi(F_2)_1\to \chi(F_-)\times_\kappa  \chi(F_+) _1$ once, so the restriction $G\colon Q'\to Q$ is injective and hence is a homeomorphism.   Since $Q$ represents twice a generator of $H_4(\CC P^3)$, it follows that $Q'$ represents   $\pm2y\in H_4(\chi(F_2))$.

\subsubsection{Representing the class  $4y$ by $\chi^\ab(F_2)$}
Using the previous example,    $[\chi^\ab(F_2)] =2[Q']=\pm 4y$.

\subsubsection{Representing the generator  $x\in H_2(\chi(F_2))$ by an embedded 2-sphere} 
Consider the disk ${\mathbb D}$ of radius $\tfrac \pi 2$, in polar coordinates $\alpha \in [0, \tfrac \pi 2]$ and $\theta \in S^1$.  Define two maps 
$$ A_1\colon  {\mathbb D}\to \chi(F_2),~A_1(\alpha, \theta)=
\left[\left(  e^{\alpha\bbi},   e^{\alpha\bbj},  e^{\theta I(\alpha)}   e^{\alpha\bbj}e^{-\theta I(\alpha)}  ,   e^{\theta I(\alpha)} e^{\alpha\bbi}e^{-\theta I(\alpha)}\right)\right] \text{ and }$$
$$A_2\colon  {\mathbb D}\to \chi(F_2),~A_2(\alpha, \theta)= \left[\left(  e^{\alpha\bbi},   e^{\alpha\bbj},  -e^{\alpha\bbk}e^{\theta I(\alpha)}   e^{\alpha\bbj}e^{-\theta I(\alpha)}  e^{-\alpha\bbk},   -e^{\alpha\bbk}e^{\theta I(\alpha)} e^{\alpha\bbi}e^{-\theta I(\alpha)} e^{-\alpha\bbk}\right)\right],$$
where $$I(\alpha)=\frac{\sin\alpha \bbi -\sin\alpha\bbj +\cos\alpha \bbk}{\left\| \sin\alpha \bbi -\sin\alpha\bbj +\cos\alpha \bbk\right\|}.$$
Note that 
$$
[e^{\alpha \bbi},e^{\alpha\bbj}]= (\cos2\alpha+2\cos^2\alpha\sin^2\alpha)
+2 \cos\alpha\sin^2\alpha(\sin\alpha \bbi -\sin\alpha\bbj +\cos\alpha \bbk).
$$
It follows easily that $A_1$ and $A_2$ take values in $\chi(F_2)$.  

\begin{proposition} The maps $A_1,A_2$ are embeddings of the disk onto opposite hemispheres of an embedded  2-sphere in $\chi(F_2)$.  This 2-sphere  represents the generator $x\in H_2(\chi(F_2))$ and intersects the zero section of $E$ transversely in two points  with the same sign.  
\end{proposition}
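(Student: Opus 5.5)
The plan is to first check that the two disks glue into a sphere. Then I would identify each disk, up to homotopy rel boundary, with a fiber of the disk bundle $E$ from Theorem \ref{thm8}. The homology class then follows from intersection numbers with $Q'$ and with the zero section.

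\emph{Step 1: gluing.} At $\alpha=\tfrac\pi2$ we have $e^{\alpha\bbi}=\bbi$, $e^{\alpha\bbj}=\bbj$ and $I(\tfrac\pi2)=(\bbi-\bbj)/\sqrt2$, so $\kappa=-1$ on both boundary circles, which lie in $\chi(F_2)_{-1}\cong \RR P^3$. Conjugation by $\bbk$ sends $\bbi-\bbj$ to $-(\bbi-\bbj)$ and $\bbj$ to $-\bbj$. Hence
$$-\bbk\, e^{\theta I}\bbj e^{-\theta I}\,\overline{\bbk}= e^{-\theta I}\bbj e^{\theta I},$$
and likewise for $\bbi$. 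This gives $A_2(\tfrac\pi2,\theta)=A_1(\tfrac\pi2,-\theta)$, so $A_1\cup A_2$ is a continuous map of $S^2$.

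\emph{Step 2: embedding.} For $0<\alpha<\tfrac\pi2$ the value $\kappa=\Real[e^{\alpha\bbi},e^{\alpha\bbj}]$ is a strictly monotone function of $\alpha$ in $(-1,1)$. For fixed $\alpha$, $\theta\mapsto A_i(\alpha,\theta)$ is exactly the modified Goldman twist flow orbit. By Proposition \ref{deadly} this orbit is a full fiber circle of $\phi$, traversed once, given the appropriate period in $\theta$. Applying $W\times W$ to $\phi\circ A_1$ and $\phi\circ A_2$ gives $F_+$ components $(\cos\alpha,\cos\alpha,\ast)$ and $(-\cos\alpha,-\cos\alpha,\ast)$ respectively. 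These differ for $\alpha<\tfrac\pi2$, so the images of the open disks are disjoint. Each open disk is injective since distinct $\alpha$ give distinct $\kappa$, and each circle is a free orbit. At $\alpha=0$ the centers are the central points $(1,1,1,1)$ and $(1,1,-1,-1)$. Near the fibers of Proposition \ref{fibers2} over the boundary circle, smoothness of the gluing follows from the smooth $SU(2)/\{\pm1\}$-fiber structure of Theorem \ref{thm7}. So $S=A_1\cup A_2$ is an embedded $2$-sphere.

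\emph{Step 3: homology class.} The sphere meets $\chi(F_2)_1$ only in the two central points $A_1(0)$ and $A_2(0)$, and both lie on $Q'=\Psi(\TT^4\times\{\tfrac\pi2\})$ (take all angles $0$ or $\pi$). Over $(0,1)$, $\Lambda\circ A_i(\alpha,\cdot)$ is a fixed point of $S^2\times S^2$ times a monotone path in $t$. The path $\alpha\mapsto W(e^{\alpha\bbi},e^{\alpha\bbj})=(\cos\alpha,\cos\alpha,\cos^2\alpha)$ is not literally a gradient line. However, it runs from the center to the corner $(1,1,1)$ and stays inside $B$, so it is homotopic, rel endpoints and through arcs transverse to the level sets of $k$, to the flow line $t(1,1,1)$ of Proposition \ref{GFLonB}. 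Through the map $G$ to the mapping cylinder $M(\Lambda_0)$, each $A_i$ is therefore homotopic rel $\partial$ to a single fiber disk of $E$ over a point of $S^2\times S^2$. Each such fiber meets the zero section once, so $S$ meets the zero section in two points. The involution of $F_2$ swapping $F_\pm$, combined with the sign symmetry relating $A_1$ and $A_2$, shows the two signs agree. Thus $[S]\cdot[Q]=\pm2$. Since $[Q]=[Q']=\pm2y$ and $x\cdot y=1$, it follows that $[S]=\pm x$.

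The main obstacle is transversality and sign at the two central points, where $\chi(F_2)$ is only a topological manifold and $h$ is only a homeomorphism. I would avoid local smooth computations there. Instead, I would compute the intersection number homologically: push the zero section off to a section of $E$ inside the smooth collar $\chi(F_2)_{[0,0.5]}$, where Lemma \ref{principal} makes everything smooth, and count intersections of $S$ with that section. Each fiber circle $A_i(\alpha,\cdot)$ bounds a transverse disk there, and the Euler class $(2,2)$ computation fixes the count and sign.
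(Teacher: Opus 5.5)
Your outline follows the paper's sketch, and your Step~1 computation is correct. The sphere meets $Q'$ only at the central representations $(1,1,1,1)$ and $(1,1,-1,-1)$, and two intersections of the same sign give $[S]\cdot[Q']=\pm2$, hence $[S]=\pm x$. The gap is the ``same sign'' step. The paper asserts it ``by construction'', and it carries all the content.

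Since $H_2(\RR P^3)=0$, $H_1(\RR P^3)=\ZZ/2$ and $H_1(\chi(F_2))=0$, the map $H_2(\chi(F_2))\to H_2(\chi(F_2),\chi(F_2)_{[-1,0]})\cong H_2(E,\partial E)\cong\ZZ$ is multiplication by $2$. Your homotopy of each hemisphere's part in $E$ to a fiber disk therefore only shows that $[S]$ maps to $(\delta_1+\delta_2)$ times the fiber class, with $\delta_i=\pm1$. That is, $[S]\in\{0,\pm x\}$.

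Neither mechanism you offer for $\delta_1=\delta_2$ works:
\begin{itemize}
\item The swap involution $\iota$ fixes $A_1$ pointwise. Indeed $\iota^*A_1(\alpha,\theta)$ is conjugate to $A_1(\alpha,\theta)$ by $\tfrac{\bbi+\bbj}{\sqrt2}e^{-\theta I(\alpha)}$, because conjugation by $\tfrac{\bbi+\bbj}{\sqrt2}$ swaps $\bbi,\bbj$ and negates $I(\alpha)$. Geometrically, $A_1$ lies over the diagonal of $S^2\times S^2$. So $\iota$ never relates the two intersection points.
\item A section of $E$ lying in the collar $\chi(F_2)_{[0,0.5]}$ would be nowhere zero. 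That is impossible because $e(E)=2s_1+2s_2\neq0$. Also, the Euler class computes $Q\cdot Q$, not $S\cdot Q$.
\end{itemize}
Neither device addresses geometric transversality at the central points either, and those are not smooth points of $\chi(F_2)$.

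There is also a problem upstream: the printed $A_2$ is not a representation for $0<\alpha<\tfrac\pi2$. Its $F_+$ commutator is $e^{\alpha\bbk}[e^{\alpha\bbi},e^{\alpha\bbj}]^{-1}e^{-\alpha\bbk}$, and $e^{\alpha\bbk}$ does not commute with $I(\alpha)$. At $\alpha=\tfrac\pi4$ the relator has real part $\tfrac12$. So your Step~2 claim that $\theta\mapsto A_2(\alpha,\theta)$ is a twist-flow orbit is false.

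A natural repair uses $\epsilon A_1$, where $\epsilon$ multiplies $\rho(r_+)$ and $\rho(s_+)$ by $-1$. Its boundary is the circle $A=e^{\theta I_0}\bbk$ in the coordinate $A\mapsto(\bbi,\bbj,A\bbj\bar A,A\bbi\bar A)$ of Section~\ref{POT8}, where $I_0=I(\tfrac\pi2)$. This circle is joined to $\partial A_1=\{A=e^{\theta I_0}\}$ by the embedded annulus $A=e^{\beta\bbk}e^{\theta I_0}$, $0\le\beta\le\tfrac\pi2$, in $\chi(F_2)_{-1}$.

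This makes the sign question precise. The map $\epsilon$ is orientation preserving on $\chi(F_2)$, being a symplectomorphism of $\chi^*(F_2)$. It preserves $\kappa$ and preserves $Q'$ with its orientation. Hence $A_1\cap E$ and $\epsilon A_1\cap E$, with their $(\alpha,\theta)$-orientations, have the same class in $H_2(E,\partial E)$. The resulting sphere therefore represents $\pm x$ exactly when the two $\theta$-oriented boundary circles are matched orientation-reversingly, and represents $0$ otherwise.

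The annulus above does reverse, since $\bbk e^{\theta I_0}=e^{-\theta I_0}\bbk$. This orientation bookkeeping at the equator, not the swap involution, is what your Step~3 needs. You record the reversal $A_2(\tfrac\pi2,\theta)=A_1(\tfrac\pi2,-\theta)$ in Step~1 but never use it.
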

\noindent{\sl Sketch of proof.} 
Since $A_1(\tfrac\pi 2, \theta)=A_2(\tfrac\pi 2, -\theta)$, the two circles at $\alpha=\tfrac \pi 2$ are identified with reversed orientation.  Moreover $A_1$ and $A_2$ disjointly embed each open disk. Thus the union of the (oriented) disks is an  oriented embedded 2-sphere in $\chi(F_2)$.  By construction, this sphere intersects $Q'$  in two points with the same sign, so  it represents $\pm x$.\qed

\subsection{Further remarks} Other examples of multicurves associated to 
holonomy perturbed    $SU(2)$ character varieties of 4-ended tangles in the 3-ball can be found 
in   \cite{FKP, HHK, HHK3,KaiS}.  K. Smith's  article \cite{KaiS} describes the  Lagrangian correspondence $${\rm Curv}(\chi(S^2,4))\times {\rm Curv}(\chi(S^2,4))\to{\rm Curv}(\chi(S^2,4))$$ induced by Conway's {\em Tangle Addition} six-stranded tangle in a twice punctured
3-ball.  This correspondence can be postcomposed with  the Lagrangian correspondence $\mathcal{L}_{(H_0,T)}$ in Corollary \ref{correspond} to produce 
a Lagrangian correspondence   taking pairs of multicurves in $\chi(S^2,4)$ to Lagrangian immersions in $\chi^*(F_2)$.  
The articles \cite{CHKK, KaiS} contain further examples which highlight the need to   keep track of  hidden information,  in the form of {\em bounding cochains}, as explained in \cite{Fukaya, Bottman-Wehrheim, CHKK}, in order to obtain a well-defined invariant of closed 3-manifolds.   
    
 \medskip    
 
The action of the mapping class group $PSL(2,\ZZ)$ of $(S^2,4)$ on $\chi(S^2,4)$
is the tautological one \cite[Theorem B]{CHK}. We leave the challenge of describing  the genus-two mapping class group action  on   ${\rm Lag}(\chi(F_2))$ to future work.

\appendix

  \section{Proof of Theorem \ref{th4.1K6}}\label{APPB}
   \begin{lemma}\label{lem4.3}
Suppose $R_-, S_-, R_+, S_+\in SU(2)$ satisfy $[R_-,S_-][R_+,S_+]=1$.  
Set 
$$
W=\left\{\Ima(R_-), \Ima \left(\overline{S_-}~\overline{R_-}\right),   \Ima(S_+ S_-),  \Ima \left(
\overline{S_- }R_+ \overline{S_+}\right),  \Ima \left(\overline{R_+ }S_- \right)\right\}\subset su(2).
$$

Then $\dim {\rm Span}(W)<3$.  Moreover, $\dim {\rm Span}(W)<2$ if and only if $R_-,S_-,R_+,$ and $S_-$ generate an abelian subgroup of $SU(2)$. 

Choose  $x_1\in S^2$ to be a unit vector in perpendicular to ${\rm Span}(W)$ and define $x_2,\dots, x_6\in SU(2)$ by:
 \begin{equation}\label{exes} x_2=\overline{x_1} R_-, ~ x_3=\overline{R_-} x_1\overline{ S_-},~ x_4=S_-\overline{x_1}S_+,~ x_5=\overline{S_+}x_1\overline{S_-}R_+,~ x_6=\overline{R_+}S_-\overline{x_1}.\end{equation}
Then
\begin{equation}
\label{require}
\Real(x_i)=0,~x_1 x_2x_3x_4x_5 x_6=1,~ R_-=x_1x_2,~S_-=\overline{x_3}~ \overline{ x_2},\text{ and }  R_+=x_4x_5,~ S_+=\overline{x_6}~\overline{x_5}.
\end{equation}

Moreover,
\begin{itemize}
\item If $v,w\in W$ is any pair satisfying  $vw\ne wv$ then $\dim {\rm Span}(W)=2$ and $x_1=\pm \frac{vw-wv}{||vw-wv||} $. No   choice other than (\ref{exes}) for $x_2,\dots, x_6$ satisfies the conclusions (\ref{require}).
\item If all pairs of elements of $W$ commute, then $R_-,S_-,R_+,$ and $S_-$ generate an abelian subgroup of $SU(2)$.   Moreover, $\dim {\rm Span}(W)=0$ or $1$, and, correspondingly, there exists an $S^2$ or $S^1$ of choices for $x_1$.  No   choice other than (\ref{exes}) for $x_2,\dots, x_6$ satisfies the conclusions (\ref{require}).
 \end{itemize}
 
\end{lemma}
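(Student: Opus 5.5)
The plan is to reduce everything to a single geometric fact. For unit quaternions $A$, $x\in S^2$, the product $\overline{x}A$ is traceless exactly when $\Real(\overline{x}A)=0$. Since $\Real(\overline{x}A)=\langle x, A\rangle=\langle x,\Ima(A)\rangle$ for $x\in su(2)$, this happens iff $x\perp \Ima(A)$. More generally, $\Real(\overline{B}xC)=\langle x, B\overline{C}\rangle$ by invariance of the form under left and right multiplication.

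\textbf{Tracelessness of the $x_i$.} Apply this to each $x_i$ in (\ref{exes}):
\begin{itemize}
\item $\Real(x_2)=\langle x_1,\Ima(R_-)\rangle$;
\item $\Real(x_3)=\langle x_1, R_-S_-\rangle$, and $\Ima(R_-S_-)=-\Ima(\overline{S_-}\,\overline{R_-})$;
\item $\Real(x_4)=\Real(\overline{x_1}S_+S_-)=-\langle x_1,\Ima(S_+S_-)\rangle$;
\item $\Real(x_5)=\langle x_1, S_+\overline{R_+}S_-\rangle$, which involves the conjugate of $\overline{S_-}R_+\overline{S_+}$;
\item $\Real(x_6)$ similarly involves $\overline{R_+}S_-$.
\end{itemize}
So $\Real(x_i)=0$ for $i=2,\dots,6$ is exactly the condition $x_1\perp {\rm Span}(W)$; I will match each element of $W$ to one $x_i$. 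The identities in (\ref{require}) are then direct telescoping:
\begin{itemize}
\item $x_1x_2=R_-$, using $x_1\overline{x_1}=1$;
\item $\overline{x_3}\,\overline{x_2}=S_-\overline{x_1}R_-\overline{R_-}x_1=S_-$;
\item $x_4x_5=R_+$ and $\overline{x_6}\,\overline{x_5}=S_+$ likewise.
\end{itemize}
The product $x_1\cdots x_6$ collapses to $R_-\overline{R_-}\cdots$. One also needs $x_6=\overline{x_1x_2x_3x_4x_5}$, and here the relation $[R_-,S_-][R_+,S_+]=1$ enters. Uniqueness of $x_2,\dots,x_6$ given $x_1$ follows because (\ref{require}) determines them successively: $x_2=\overline{x_1}R_-$, then $x_3$ from $S_-$, $x_4$ from the product relation, and so on.

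\textbf{Dimension bound $\dim{\rm Span}(W)<3$.} I must exhibit a nonzero $x_1$ perpendicular to $W$; this is the main obstacle. Equivalently, I want traceless $x_1$ making $\overline{x_1}R_-$, $\overline{R_-}x_1\overline{S_-}$, and the rest traceless. My approach is to use the known structure of representations rather than raw algebra. The surjection $p^*$ of Theorem \ref{th4.1K6} would give this directly, but the appendix is meant to prove that theorem, so I avoid circularity. Instead I argue with $C=[R_-,S_-]$.
\begin{itemize}
\item If $R_-,S_-$ do not commute, there is a traceless $x_1$ anticommuting appropriately, namely $x_1\perp \Ima R_-,\Ima S_-$, which is proportional to $[\Ima R_-,\Ima S_-]$. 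Conjugating by $x_1$ inverts both $R_-$ and $S_-$, because $x_1Ax_1^{-1}=\overline{A}$ when $x_1\perp\Ima A$.
\item This gives $x_1Cx_1^{-1}=\overline{R_-}\,\overline{S_-}R_-S_-$, a conjugate of $C^{-1}$. I would try to show by trace identities that the remaining vectors in $W$ are forced into the same plane. Concretely, I would use that $x_1\perp\Ima(R_-)$ and $x_1\perp\Ima(\overline{S_-}\,\overline{R_-})$ cut $x_1$ to the line $\Ima R_-\times\Ima(R_-S_-)$.
\item I then verify the other three orthogonalities using the relation, in the form $S_+S_-=\ldots$ rewritten through $[R_+,S_+]=C^{-1}$.
\end{itemize}
This is the step I expect to require the real computation. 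I would attempt it by normalizing via conjugation, $R_-=e^{a\bbi}$ and $S_-$ in the $\bbi\bbj$-plane, and checking directly.

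\textbf{Remaining claims.} The case analysis comes from the cross product. If two elements $v,w$ of $W$ fail to commute, their imaginary parts are independent, so the span is exactly $2$-dimensional and $x_1=\pm\frac{vw-wv}{\|vw-wv\|}$, the normalized cross product. If all elements of $W$ commute, they lie on one line, and $R_-,R_+,S_\pm$, being recoverable from the $x_i$ lying in one circle, generate an abelian group. Conversely, an abelian image puts all of $W$ on the common axis. In that case $x_1$ ranges over $S^2$ or the great circle $S^1$, according as the span is $0$- or $1$-dimensional.
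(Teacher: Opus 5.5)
\textbf{There is a genuine gap: the dimension bound $\dim{\rm Span}(W)<3$ is never proved.} The parts you do carry out are fine: $\Real(x_i)=0$ for $i=2,\dots,6$ is equivalent to $x_1\perp{\rm Span}(W)$, the identities in (\ref{require}) hold, and $x_2,\dots,x_6$ are unique.

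What is missing is the existence of a unit $x_1$ orthogonal to all five vectors of $W$, which is the heart of the lemma. You only outline what you would try, and your one concrete candidate is wrong.

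\emph{First candidate.} If $x_1\perp\Ima R_-,\Ima S_-$ and $R_-,S_-$ do not commute, then $\Real(x_3)=\langle x_1,\Ima(R_-S_-)\rangle=\langle x_1,\Ima R_-\times\Ima S_-\rangle\neq 0$. For example, with $R_\pm=\bbi$, $S_\pm=\bbj$ you get $x_1=\pm\bbk$ and $x_3=\overline{R_-}x_1\overline{S_-}=\pm1$, whereas the correct answer is $x_1=\pm\bbj$.

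\emph{Second candidate.} The line through $\Ima R_-\times\Ima(R_-S_-)$ is right when that vector is nonzero. It vanishes, however, whenever $R_-$ and $S_-$ commute, and this happens for non-abelian representations. For instance, $R_-=S_-=\bbi$, $R_+=S_+=\bbj$ forces $x_1=\pm\bbj$ via $\Ima(S_+S_-)=-\bbk$ and $\Ima R_-=\bbi$, but your recipe gives nothing. In either case, orthogonality to the remaining three vectors is exactly the nontrivial content, and you leave it unchecked.

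\emph{Role of the hypothesis.} You place it in the wrong step. The identity $x_1x_2\cdots x_6=1$ holds for every $x_1\in SU(2)$ by telescoping: $x_1x_2=R_-$, $R_-x_3=x_1\overline{S_-}$, $x_1\overline{S_-}x_4=S_+$, $S_+x_5=x_1\overline{S_-}R_+$, and $x_1\overline{S_-}R_+x_6=1$. The relation $[R_-,S_-][R_+,S_+]=1$ is needed only to make the five orthogonality conditions on $x_1$ compatible. Without it, $W$ generically spans $su(2)$.

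\textbf{The idea that closes the gap (the paper's).} Set $a=R_-$, $b=\overline{S_-}\,\overline{R_-}$, $c=S_+S_-$, $d=\overline{S_-}R_+\overline{S_+}$, $e=\overline{R_+}S_-$, so that $W=\{\Ima a,\dots,\Ima e\}$.
\begin{itemize}
\item $edcba=1$ holds identically, while $abcde=\overline{S_-}[S_-,R_-][S_+,R_+]S_-=1$ by the relation. Hence $\overline e=abcd=dcba$.
\item Conjugating this identity gives $\Real(abc)=\Real(cba)$ and $\Real(bcd)=\Real(dcb)$, that is, $\Real(a(bc-cb))=\Real(d(bc-cb))=0$. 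So when $b,c$ do not commute, $x_1=(bc-cb)/\|bc-cb\|$ is orthogonal to $\Ima a,\Ima b,\Ima c,\Ima d$.
\item Any unit $x\in su(2)$ orthogonal to $\Ima a,\dots,\Ima d$ satisfies $x\overline{y}=yx$ for $y\in\{a,b,c,d\}$. Therefore $\Real(xe)=\Real(x\,\overline d\,\overline c\,\overline b\,\overline a)=\Real(dcba\,x)=\Real(\overline e\,x)=-\Real(xe)$, so $x\perp\Ima e$ comes for free.
\item When $b$ and $c$ commute, a separate argument is needed. The paper uses $\overline{bc}\,(\overline d a)\,bc=a\overline d$ to place $\Ima a,\dots,\Ima d$ in a common plane.
\end{itemize}
Once $\dim{\rm Span}(W)\le 2$ is established this way, your handling of the remaining claims goes through: the normalized commutator when two elements of $W$ fail to commute, and the abelian case.
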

Assuming Lemma \ref{lem4.3}, the proof of Theorem \ref{th4.1K6} is completed as follows. Given
$[\rho]\in \chi(F_2)$, choose a representative $\rho$, and set $R_\pm=\rho(r_\pm)$ and $S_\pm=\rho(s_\pm)$. 
If $\rho$ is non-abelian, then 
 Lemma \ref{lem4.3} implies that $\dim {\rm Span}(W)=2$, and that  the fiber of $p^*$ over $[\rho]$ consists of a pair of distinct points, corresponding to the choice of sign of $x_1$, and hence  precisely one free $\nu$ orbit. 
 
If $ \rho $ is abelian but non-central, then $\dim {\rm Span}(W)=1$, and hence there exists 
a unit vector $P\in {\rm Span}(W)$, unique up to sign.   If  $x_1\perp W$ as given in the lemma, then  $e^{\theta P}x_1e^{-\theta P}$, $\theta\in S^1$, parameterizes the other solutions for $x_1$.  It follows that the fiber of $p^*$ over $[\rho]$ is a single point, fixed by $\nu$.
Finally, if $ \rho $ is central, then $\dim {\rm Span}(W)=0$, and since conjugation acts transitively on $S^2$,  
the fiber of $p^*$ over $[\rho]$ is a single point, fixed by $\nu$.

If $\rho$ is abelian, conjugating $\rho$ if necessary, we may assume that $W\subset\RR \bbk$ and hence we may choose $x_1=\bbi$.  Therefore the fiber over $[\rho]$, seen in the previous paragraph to be a single point,  has a  binary dihedral representative. 
It follows that the fixed point set is $\chi^{\rm fix}(S^2,6)$, and 
 $p^*(\chi^{\rm fix}(S^2,6))= \chi^{\rm ab}(F_2).$ \qed
 
\subsection{Proof of Lemma \ref{lem4.3}}
 Write $$a=R_-,~b=\overline{S_-}~\overline{R_-}, c=S_+S_-, ~ d= \overline{S_-}R_+ \overline{S_+}, ~e= \overline{R_+} S_-,$$ so that $W=\{\Ima(a),\Ima(b),\Ima(c),\Ima(d),\Ima(e)\}$.  If each pair of elements of $W$ commute, then $a,b,c,d$ and $e$ are contained in a maximal torus $\{e^{\alpha P}\}$. Hence $W\subset{\rm Span}\{\Ima(P)\}$ and so $\dim {\rm Span}(W)\leq 1$.  It is routine to verify that if each  pair of elements of $W$ commute, then $R_-,S_-,R_+,$ and $S_-$ generate an abelian subgroup of $SU(2)$.

Assume that some pair in $W$ does not commute.  Then clearly $\dim {\rm Span}(W)\ge 2$. 
 Observe that
$$
abcde=R_-\overline{S_-}~\overline{R_-} S_+ S_-\overline{S_-} R_+\overline{S_+}\overline{R_+} S_-=\overline{S_-} [S_-,R_-][S_+,R_+]S_-=1
$$ and $$
edcba=\overline{R_+} S_- \overline{S_-} R_+ \overline{S_+} S_+ S_- \overline{S_- }~\overline{ R_- }R_-=1,
$$
and therefore $abcd=dcba=\overline{e}$.
Let $W'=\{\Ima(a),\Ima(b),\Ima(c),\Ima(d)\}\subset W.$
If $\dim {\rm Span}(W')<3$, there there exists an $x\in S^2$ so that $\Real(xa)=\Real(xb)=\Real(xc)=\Real(xd)=0$.  This implies that 
$$\Real(xe)=\Real(x\overline{abcd})=
\Real(x\overline{d}\overline{c} \overline{b}\overline{a})
=\Real(d x \overline{c} \overline{b}\overline{a})=
\Real(d cba x)=\Real(\overline{ e} x)=-\Real(xe),
$$
and hence $\Ima(e)\in {\rm Span}(W')$, so that ${\rm Span}(W)={\rm Span}(W')$.  Hence it suffices to prove $\dim {\rm Span}(W')<3.$

\medskip

Suppose first that $b$ and $c$ do not commute. Since $abcd=dcba$, it follows that $abcd\overline{a}=dcb$ and $\overline{d} abcd=cba$. Hence 
 $\Real(cba)=\Real(abc)\text{ and }\Real(bcd)=\Real(dcb).$   Therefore
 $$\Real(a(bc-cb))=0=\Real(d(bc-cb)).$$
Set $x_1=\tfrac{bc-cb}{\|bc-cb\|}$. Then $0=\Real(x_1a)=\Real(x_1b)=\Real(x_1c)=\Real(x_1d)$, and hence $\dim {\rm Span}(W')<3$.

Suppose instead that   $b$ and $c$ commute.  Since $abcd=dcba=dbca$, $$\overline{(bc)} (\overline{d} a) (bc)=  a\bar d .$$ If $\overline{d} a=\pm1$, then there exists $Q\in S^2$ and $\gamma\in\RR$ so that
$a= e^{\gamma Q}=\pm d$.  Then ${\rm Span}(W')\subset{\rm Span}\{P,Q\}$ and so $\dim {\rm Span}(W')\leq 2$.
On the other hand, if  $\overline{d} a\ne\pm1$, then $a\overline{d}=a(\overline{d} a)\overline{a}$, so that $[\overline{a} ~\overline{c}\overline{b}, \overline{d} a]=1$.
Write $\overline{a}~ \overline{c}\overline{b}=e^{\gamma Q}$ and $\overline{d} a=e^{\tau Q}$.  Then $a=\overline{c}\overline{b} e^{-\gamma Q}$
and $d=\overline{c}\overline{b} e^{-(\gamma+\tau) Q}$.
Hence 
$$
\Ima(a)=\Ima(\overline{c}\overline{b} e^{-\gamma Q})=\cos\gamma\Ima(\overline{c}\overline{b}) -\sin\gamma\Ima(\overline{c}\overline{b} Q)
$$ and $$
\Ima(d)=\Ima(\bar c\bar b e^{-(\gamma+\tau) Q})=\cos(\gamma+\tau)\Ima(\bar c\bar b) -\sin(\gamma+\tau)\Ima(\bar c\bar bQ).
$$
Let $P\in S^2$ so that $b,c\in \{e^{\theta P}\}$.  Clearly,
$$\Ima(a),\Ima(b),\Ima(c),\Ima(d)\in {\rm Span}\{P,\Ima(\overline{c}\overline{b}Q)\},$$
so that $\dim {\rm Span}(W')\leq 2$ also.  Hence $\dim {\rm Span}(W)<3$ for any choice of $R_-,S_-,R_+,S_+$.

\medskip

 Given {\em any}  $x_1\in SU(2)$, it is trivial to check that the assignment  (\ref{exes}) satisfies $$x_1 x_2x_3x_4x_5 x_6=1,~ R_-=x_1x_2,~S_-=\overline{x_3}~\overline{x_2}, R_+=x_4x_5,~ S_+=\overline{x_6}~\overline{x_5}.$$
If, in addition,  $x_1\in {\rm Span}(W)^\perp\cap S^2$, then $\Real(x_2)=\Real(\overline{x_1}R_-)=\Real(\overline{x_1}\Ima(R_-))=-\Real( x_1\Ima(a))=0$, and, similarly, $\Real(x_i)=0$ for $i=3,4,5,6$.

Conversely, if $x_1,x_2,\dots ,x_6\in S^2$ satisfy
$$x_1 x_2x_3x_4x_5 x_6=1,~ R_-=x_1x_2,~S_-=\overline{x_3}~ \overline{x_2}, R_+=x_4x_5,~ S_+=\overline{x_6}~\overline{x_5},$$
then $R_-, S_-, R_+, S_+$ and $x_1$ obviously determine $x_2$ and $x_3$. Then $x_6=-\overline{x_6}=x_1x_2x_3x_4x_5=
x_1x_2x_3R_+$ is determined by $R_-, S_-, R_+, S_+$ and $x_1$. Finally $x_5=\overline{S_+}~\overline{x_6}$ and hence $x_4=R_+\overline{x_5}$ are also determined by $R_-, S_-, R_+, S_+$ and $x_1$.  Hence no other choices for the $x_i$ satisfy the conclusions.

\medskip

If $v,w\in W$ is any pair satisfying   $vw\ne wv$,   then $v$ and $w$  are linearly independent and hence span ${\rm Span}(W)$; in particular $\dim {\rm Span}(W) =2$.
The vector  $x=\frac{vw-wv}{\| vw-wv \|}$  is perpendicular to $v$ and $w$.  Hence $x=\pm x_1$, and in particular is independent of $v$ and $w$ up to sign.
\qed

\end{document}